\documentclass{article}
\usepackage{amsmath}
\usepackage{amssymb}
\usepackage{amsthm}
\usepackage{bbm}
\usepackage{cite}
\usepackage{stmaryrd}
\SetSymbolFont{stmry}{bold}{U}{stmry}{m}{n}
\usepackage{euscript}
\usepackage[arrow,curve,matrix,arc,2cell]{xy}
\UseAllTwocells
\usepackage[utf8]{inputenc}
\usepackage[unicode]{hyperref}
\usepackage{slashed}
\DeclareFontFamily{U}{rsfs}{} 
\DeclareFontShape{U}{rsfs}{n}{it}{<->
rsfs10}{} \DeclareSymbolFont{mscr}{U}{rsfs}{n}{it}
\DeclareSymbolFontAlphabet{\scr}{mscr}
\def\mathscr{\scr}
\begin{document}
\def\e#1\e{\begin{equation}#1\end{equation}}
\def\ea#1\ea{\begin{align}#1\end{align}}
\def\eq#1{{\rm(\ref{#1})}}
\theoremstyle{plain}
\newtheorem{thm}{Theorem}[section]
\newtheorem{lem}[thm]{Lemma}
\newtheorem{prop}[thm]{Proposition}
\newtheorem{cor}[thm]{Corollary}
\newtheorem{quest}[thm]{Question}
\newtheorem{prob}[thm]{Problem}
\theoremstyle{definition}
\newtheorem{dfn}[thm]{Definition}
\newtheorem{ex}[thm]{Example}
\newtheorem{rem}[thm]{Remark}
\newtheorem{ax}[thm]{Axiom}
\newtheorem{ass}[thm]{Assumption}
\newtheorem{property}[thm]{Property}
\newtheorem{cond}[thm]{Condition}
\numberwithin{figure}{section}
\numberwithin{equation}{section}
\def\dim{\mathop{\rm dim}\nolimits}
\def\codim{\mathop{\rm codim}\nolimits}
\def\vdim{\mathop{\rm vdim}\nolimits}
\def\sign{\mathop{\rm sign}\nolimits}
\def\Im{\mathop{\rm Im}\nolimits}
\def\det{\mathop{\rm det}\nolimits}
\def\Ker{\mathop{\rm Ker}}
\def\Coker{\mathop{\rm Coker}}
\def\Spec{\mathop{\rm Spec}}
\def\Perf{\mathop{\rm Perf}}
\def\Vect{\mathop{\rm Vect}}
\def\LCon{\mathop{\rm LCon}}
\def\Flag{\mathop{\rm Flag}\nolimits}
\def\FlagSt{\mathop{\rm FlagSt}\nolimits}
\def\Iso{\mathop{\rm Iso}\nolimits}
\def\Aut{\mathop{\rm Aut}}
\def\End{\mathop{\rm End}}
\def\Ho{\mathop{\rm Ho}}
\def\PGL{\mathop{\rm PGL}}
\def\GL{\mathop{\rm GL}}
\def\SL{\mathop{\rm SL}}
\def\SO{\mathop{\rm SO}}
\def\SU{\mathop{\rm SU}}
\def\Sp{\mathop{\rm Sp}}
\def\Ch{\mathop{\rm Ch}\nolimits}
\def\Spin{\mathop{\rm Spin}}
\def\Spinc{\mathop{\rm Spin^c}}
\def\SF{\mathop{\rm SF}}
\def\Tr{\mathop{\rm Tr}}
\def\U{{\mathbin{\rm U}}}
\def\vol{\mathop{\rm vol}}
\def\inc{\mathop{\rm inc}}
\def\ind{\mathop{\rm ind}\nolimits}
\def\tind{{\text{\rm t-ind}}}
\def\bdim{{\mathbin{\bf dim}\kern.1em}}
\def\rk{\mathop{\rm rk}}
\def\Pic{\mathop{\rm Pic}}
\def\colim{\mathop{\rm colim}\nolimits}
\def\Stab{\mathop{\rm Stab}\nolimits}
\def\Exact{\mathop{\rm Exact}\nolimits}
\def\Crit{\mathop{\rm Crit}}
\def\Cass{\mathop{\rm Cass}}
\def\supp{\mathop{\rm supp}}
\def\Or{\mathop{\rm Or}}
\def\rank{\mathop{\rm rank}\nolimits}
\def\Hom{\mathop{\rm Hom}\nolimits}
\def\bHom{\mathop{\bf Hom}\nolimits}
\def\Ext{\mathop{\rm Ext}\nolimits}
\def\cExt{\mathop{{\mathcal E}\mathit{xt}}\nolimits}
\def\id{{\mathop{\rm id}\nolimits}}
\def\Id{{\mathop{\rm Id}\nolimits}}
\def\Sch{\mathop{\bf Sch}\nolimits}
\def\Map{{\mathop{\rm Map}\nolimits}}
\def\AlgSp{\mathop{\bf AlgSp}\nolimits}
\def\Art{\mathop{\bf Art}\nolimits}
\def\HSt{\mathop{\bf HSt}\nolimits}
\def\dSt{\mathop{\bf dSt}\nolimits}
\def\dArt{\mathop{\bf dArt}\nolimits}
\def\TopSta{{\mathop{\bf TopSta}\nolimits}}
\def\Topho{{\mathop{\bf Top^{ho}}}}
\def\Gpds{{\mathop{\bf Gpds}\nolimits}}
\def\Aff{\mathop{\bf Aff}\nolimits}
\def\SPr{\mathop{\bf SPr}\nolimits}
\def\SSet{\mathop{\bf SSet}\nolimits}
\def\Parf{\mathop{\bf Perf}}
\def\Ad{\mathop{\rm Ad}}
\def\pl{{\rm pl}}
\def\mix{{\rm mix}}
\def\fd{{\rm fd}}
\def\fpd{{\rm fpd}}
\def\pfd{{\rm pfd}}
\def\coa{{\rm coa}}
\def\rsi{{\rm si}}
\def\rst{{\rm st}}
\def\ss{{\rm ss}}
\def\vi{{\rm vi}}
\def\smq{{\rm smq}}
\def\rsm{{\rm sm}}
\def\cla{{\rm cla}}
\def\rArt{{\rm Art}}
\def\po{{\rm po}}
\def\spo{{\rm spo}}
\def\Kur{{\rm Kur}}
\def\dcr{{\rm dcr}}
\def\top{{\rm top}}
\def\fc{{\rm fc}}
\def\cla{{\rm cla}}
\def\num{{\rm num}}
\def\irr{{\rm irr}}
\def\red{{\rm red}}
\def\sing{{\rm sing}}
\def\virt{{\rm virt}}
\def\qcoh{{\rm qcoh}}
\def\coh{{\rm coh}}
\def\lft{{\rm lft}}
\def\lfp{{\rm lfp}}
\def\cs{{\rm cs}}
\def\dR{{\rm dR}}
\def\Obj{{\rm Obj}}
\def\cdga{{\mathop{\bf cdga}\nolimits}}
\def\Rmod{\mathop{R\text{\rm -mod}}}
\def\Top{{\mathop{\bf Top}\nolimits}}
\def\modKQ{\mathop{\text{\rm mod-}\K Q}}
\def\modKQI{\text{\rm mod-$\K Q/I$}}
\def\modCQ{\mathop{\text{mod-}\C Q}}
\def\modCQI{\text{\rm mod-$\C Q/I$}}
\def\ul{\underline}
\def\bs{\boldsymbol}
\def\ge{\geqslant}
\def\le{\leqslant\nobreak}
\def\boo{{\mathbin{\mathbbm 1}}}
\def\O{{\mathcal O}}
\def\bA{{\mathbin{\mathbb A}}}
\def\bG{{\mathbin{\mathbb G}}}
\def\bL{{\mathbin{\mathbb L}}}
\def\P{{\mathbin{\mathbb P}}}
\def\bT{{\mathbin{\mathbb T}}}
\def\H{{\mathbin{\mathbb H}}}
\def\K{{\mathbin{\mathbb K}}}
\def\R{{\mathbin{\mathbb R}}}
\def\Z{{\mathbin{\mathbb Z}}}
\def\Q{{\mathbin{\mathbb Q}}}
\def\N{{\mathbin{\mathbb N}}}
\def\C{{\mathbin{\mathbb C}}}
\def\CP{{\mathbin{\mathbb{CP}}}}
\def\KP{{\mathbin{\mathbb{KP}}}}
\def\RP{{\mathbin{\mathbb{RP}}}}
\def\fC{{\mathbin{\mathfrak C}\kern.05em}}
\def\fD{{\mathbin{\mathfrak D}}}
\def\fE{{\mathbin{\mathfrak E}}}
\def\fF{{\mathbin{\mathfrak F}}}
\def\A{{\mathbin{\cal A}}}
\def\G{{{\cal G}}}
\def\M{{\mathbin{\cal M}}}
\def\B{{\mathbin{\cal B}}}
\def\ovB{{\mathbin{\smash{\,\overline{\!\mathcal B}}}}}
\def\cC{{\mathbin{\cal C}}}
\def\cD{{\mathbin{\cal D}}}
\def\cE{{\mathbin{\cal E}}}
\def\cF{{\mathbin{\cal F}}}
\def\cG{{\mathbin{\cal G}}}
\def\cH{{\mathbin{\cal H}}}
\def\cI{{\mathbin{\cal I}}}
\def\cJ{{\mathbin{\cal J}}}
\def\cK{{\mathbin{\cal K}}}
\def\cL{{\mathbin{\cal L}}}
\def\bcM{{\mathbin{\bs{\cal M}}}}
\def\cN{{\mathbin{\cal N}\kern .04em}}
\def\cP{{\mathbin{\cal P}}}
\def\cQ{{\mathbin{\cal Q}}}
\def\cR{{\mathbin{\cal R}}}
\def\cS{{\mathbin{\cal S}}}
\def\T{{{\cal T}\kern .04em}}
\def\cW{{\mathbin{\cal W}}}
\def\cX{{\cal X}}
\def\cY{{\cal Y}}
\def\cZ{{\cal Z}}
\def\oM{{\mathbin{\smash{\,\,\overline{\!\!\mathcal M\!}\,}}}}
\def\cV{{\cal V}}
\def\cW{{\cal W}}
\def\g{{\mathfrak g}}
\def\h{{\mathfrak h}}
\def\m{{\mathfrak m}}
\def\u{{\mathfrak u}}
\def\so{{\mathfrak{so}}}
\def\su{{\mathfrak{su}}}
\def\sp{{\mathfrak{sp}}}
\def\fW{{\mathfrak W}}
\def\fX{{\mathfrak X}}
\def\fY{{\mathfrak Y}}
\def\fZ{{\mathfrak Z}}
\def\bM{{\bs M}}
\def\bN{{\bs N}}
\def\bO{{\bs O}}
\def\bQ{{\bs Q}}
\def\bS{{\bs S}}
\def\bU{{\bs U}}
\def\bV{{\bs V}}
\def\bW{{\bs W}\kern -0.1em}
\def\bX{{\bs X}}
\def\bY{{\bs Y}\kern -0.1em}
\def\bZ{{\bs Z}}
\def\al{\alpha}
\def\be{\beta}
\def\ga{\gamma}
\def\de{\delta}
\def\io{\iota}
\def\ep{\epsilon}
\def\la{\lambda}
\def\ka{\kappa}
\def\th{\theta}
\def\ze{\zeta}
\def\up{\upsilon}
\def\vp{\varphi}
\def\si{\sigma}
\def\om{\omega}
\def\De{\Delta}
\def\Ka{{\rm K}}
\def\La{\Lambda}
\def\Om{\Omega}
\def\Ga{\Gamma}
\def\Si{\Sigma}
\def\Th{\Theta}
\def\Up{\Upsilon}
\def\Chi{{\rm X}}
\def\Tau{{\rm T}}
\def\Nu{{\rm N}}
\def\pd{\partial}
\def\ts{\textstyle}
\def\st{\scriptstyle}
\def\sst{\scriptscriptstyle}
\def\w{\wedge}
\def\sm{\setminus}
\def\lt{\ltimes}
\def\bu{\bullet}
\def\sh{\sharp}
\def\di{\diamond}
\def\he{\heartsuit}
\def\od{\odot}
\def\op{\oplus}
\def\ot{\otimes}
\def\bt{\boxtimes}
\def\ov{\overline}
\def\bigop{\bigoplus}
\def\bigot{\bigotimes}
\def\iy{\infty}
\def\es{\emptyset}
\def\ra{\rightarrow}
\def\rra{\rightrightarrows}
\def\Ra{\Rightarrow}
\def\Longra{\Longrightarrow}
\def\ab{\allowbreak}
\def\longra{\longrightarrow}
\def\hookra{\hookrightarrow}
\def\dashra{\dashrightarrow}
\def\lb{\llbracket}
\def\rb{\rrbracket}
\def\ha{{\ts\frac{1}{2}}}
\def\t{\times}
\def\ci{\circ}
\def\ti{\tilde}
\def\d{{\rm d}}
\def\md#1{\vert #1 \vert}
\def\ms#1{\vert #1 \vert^2}
\def\bmd#1{\big\vert #1 \big\vert}
\def\bms#1{\big\vert #1 \big\vert^2}
\def\an#1{\langle #1 \rangle}
\def\ban#1{\bigl\langle #1 \bigr\rangle}
\title{On orientations for gauge-theoretic moduli spaces}
\author{Dominic Joyce, Yuuji Tanaka and Markus Upmeier}
\date{}
\maketitle

\begin{abstract} 
Let $X$ be a compact manifold, $D:\Ga^\iy(E_0)\ra\Ga^\iy(E_1)$ a real elliptic operator on $X$, $G$ a Lie group, $P\ra X$ a principal $G$-bundle, and $\B_P$ the infinite-dimensional moduli space of all connections $\nabla_P$ on $P$ modulo gauge, as a topological stack. For each $[\nabla_P]\in\B_P$, we can consider the twisted elliptic operator $D^{\nabla_{\Ad(P)}}:\Ga^\iy(\Ad(P)\ot E_0)\ra\Ga^\iy(\Ad(P)\ot E_1)$ on $X$. This is a continuous family of elliptic operators over the base $\B_P$, and so has an orientation bundle $O^{E_\bu}_P\ra\B_P$, a principal $\Z_2$-bundle parametrizing orientations of $\Ker D^{\nabla_{\Ad(P)}}\op \Coker D^{\nabla_{\Ad(P)}}$ at each $[\nabla_P]$. An {\it orientation\/} on $(\B_P,E_\bu)$ is a trivialization~$O^{E_\bu}_P\cong\B_P\t\Z_2$.

In gauge theory one studies moduli spaces $\M_P^{\rm ga}$ of connections $\nabla_P$ on $P$ satisfying some curvature condition, such as anti-self-dual instantons on Riemannian 4-manifolds $(X,g)$. Under good conditions $\M_P^{\rm ga}$ is a smooth manifold, and orientations on $(\B_P,E_\bu)$ pull back to orientations on $\M_P^{\rm ga}$ in the usual sense of differential geometry under the inclusion $\M_P^{\rm ga}\hookra\B_P$. This is important in areas such as Donaldson theory, where one needs an orientation on $\M_P^{\rm ga}$ to define enumerative invariants.

We explain a package of techniques, some known and some new, for proving orientability and constructing canonical orientations on $(\B_P,E_\bu)$, after fixing some algebro-topological information on $X$. We use these to construct canonical orientations on gauge theory moduli spaces, including new results for moduli spaces of flat connections on 2- and 3-manifolds, instantons, the Kapustin--Witten equations, and the Vafa--Witten equations on 4-manifolds, and the Haydys--Witten equations on 5-manifolds.
\end{abstract}

\setcounter{tocdepth}{2}
\tableofcontents

\section{Introduction}
\label{or1}

We first set up the problem we wish to discuss.

\begin{dfn}
\label{or1def1}
Suppose we are given the following data:
\begin{itemize}
\setlength{\itemsep}{0pt}
\setlength{\parsep}{0pt}
\item[(a)] A compact, connected manifold $X$, of dimension $n>0$.
\item[(b)] A Lie group $G$, with $\dim G>0$, and centre $Z(G)\subseteq G$, and Lie algebra $\g$.
\item[(c)] A principal $G$-bundle $\pi:P\ra X$. We write $\Ad(P)\ra X$ for the vector bundle with fibre $\g$ defined by $\Ad(P)=(P\t\g)/G$, where $G$ acts on $P$ by the principal bundle action, and on $\g$ by the adjoint action.
\end{itemize}

Write $\A_P$ for the set of connections $\nabla_P$ on the principal bundle $P\ra X$. This is a real affine space modelled on the infinite-dimensional vector space $\Ga^\iy(\Ad(P))$, and we make $\A_P$ into a topological space using the $C^\iy$ topology on $\Ga^\iy(\Ad(P))$. Here if $E\ra X$ is a vector bundle then $\Ga^\iy(E)$ denotes the vector space of smooth sections of $E$. Note that $\A_P$ is contractible.
 
Write $\G_P=\Aut(P)$ for the infinite-dimensional Lie group of $G$-equivariant diffeomorphisms $\ga:P\ra P$ with $\pi\ci\ga=\pi$. Then $\G_P$ acts on $\A_P$ by gauge transformations, and the action is continuous for the topology on~$\A_P$. 

There is an inclusion $Z(G)\hookra\G_P$ mapping $z\in Z(G)$ to the principal bundle action of $z$ on $P$. As $X$ is connected, this identifies $Z(G)$ with the centre $Z(\G_P)$ of $\G_P$, so we may take the quotient group $\G_P/Z(G)$. The action of $Z(G)\subset\G_P$ on $\A_P$ is trivial, so the $\G_P$-action on $\A_P$ descends to a $\G_P/Z(G)$-action. 

Each $\nabla_P\in\A_P$ has a (finite-dimensional) {\it stabilizer group\/} $\Stab_{\G_P}(\nabla_P)\subset\G_P$ under the $\G_P$-action on $\A_P$, with $Z(G)\subseteq\Stab_{\G_P}(\nabla_P)$. As $X$ is connected, $\Stab_{\G_P}(\nabla_P)$ is isomorphic to a Lie subgroup $H$ of $G$ with $Z(G)\subseteq H$. As in \cite[p.~133]{DoKr} we call $\nabla_P$ {\it irreducible\/} if $\Stab_{\G_P}(\nabla_P)=Z(G)$, and {\it reducible\/} otherwise. Write $\A_P^\irr,\A_P^\red$ for the subsets of irreducible and reducible connections in $\A_P$. Then $\A_P^\irr$ is open and dense in $\A_P$, and $\A_P^\red$ is closed and of infinite codimension in the infinite-dimensional affine space $\A_P$. Hence the inclusion $\A_P^\irr\hookra\A_P$ is a weak homotopy equivalence, and $\A_P^\irr$ is weakly contractible.

We write $\B_P=[\A_P/\G_P]$ for the moduli space of gauge equivalence classes of connections on $P$, considered as a {\it topological stack\/} in the sense of Metzler \cite{Metz} and Noohi \cite{Nooh1,Nooh2}. Topological stacks will be discussed in Remark \ref{or2rem1}. Write $\B_P^\irr=[\A_P^\irr/\G_P]$ for the substack $\B_P^\irr\subseteq\B_P$ of irreducible connections. 

Define variations $\ovB_P=[\A_P/(\G_P/Z(G))]$, $\ovB_P^\irr=[\A_P^\irr/(\G_P/Z(G))]$ of $\B_P,\ab\B_P^\irr$. Then $\ovB_P$ is a topological stack, but as $\G_P/Z(G)$ acts freely on $\A_P^\irr$, we may regard $\ovB_P^\irr$ as a topological space (an example of a topological stack).

There are natural morphisms $\Pi_P:\B_P\ra\ovB_P$, $\Pi_P^\irr:\B^\irr_P\ra\ovB^\irr_P$ in the homotopy category $\Ho(\TopSta)$ of the 2-category $\TopSta$ of topological stacks, induced by $\id:\A_P\ra\A_P$ and the projection $\cG_P\ra\cG_P/Z(G)$. These are fibrations of topological stacks, with fibre the quotient stack $[*/Z(G)]$, making $\B_P,\B^\irr_P$ into $Z(G)$-gerbes over~$\ovB_P,\ovB^\irr_P$.

We mostly care about the spaces $\B_P$, but we need $\ovB_P$ to make the connection with orientations on gauge theory moduli spaces as in Remark \ref{or1rem1} and~\S\ref{or4}. 
\end{dfn}

We define orientation bundles $O^{E_\bu}_P,\bar O^{E_\bu}_P$ on the moduli spaces~$\B_P,\ovB_P$:

\begin{dfn}
\label{or1def2}
Work in the situation of Definition \ref{or1def1}, with the same notation. Suppose we are given real vector bundles $E_0,E_1\ra X$, of the same rank $r$, and a linear elliptic partial differential operator $D:\Ga^\iy(E_0)\ra\Ga^\iy(E_1)$, of degree $d$. As a shorthand we write $E_\bu=(E_0,E_1,D)$. With respect to connections $\nabla_{E_0}$ on $E_0\ot\bigot^iT^*X$ for $0\le i<d$, when $e\in\Ga^\iy(E_0)$ we may write
\e
D(e)=\sum_{i=0}^d a_i\cdot \nabla_{E_0}^ie,
\label{or1eq1}
\e
where $a_i\in \Ga^\iy(E_0^*\ot E_1\ot S^iTX)$ for $i=0,\ldots,d$. The condition that $D$ is {\it elliptic\/} is that the {\it symbol\/} $\si(D)_{x,\xi}=a_d\vert_x\cdot\ot^d\xi:E_0\vert_x\ra E_1\vert_x$ is an isomorphism for all $x\in X$ and $0\ne\xi\in T_x^*X$.

Let $\nabla_P\in\A_P$. Then $\nabla_P$ induces a connection $\nabla_{\Ad(P)}$ on the vector bundle $\Ad(P)\ra X$. Thus we may form the twisted elliptic operator
\e
\begin{split}
D^{\nabla_{\Ad(P)}}&:\Ga^\iy(\Ad(P)\ot E_0)\longra\Ga^\iy(\Ad(P)\ot E_1),\\
D^{\nabla_{\Ad(P)}}&:e\longmapsto \sum_{i=0}^d (\id_{\Ad(P)}\ot a_i)\cdot \nabla_{\Ad(P)\ot E_0}^ie,
\end{split}
\label{or1eq2}
\e
where $\nabla_{\Ad(P)\ot E_0}$ are the connections on $\Ad(P)\ot E_0\ot\bigot^iT^*X$ for $0\le i<d$ induced by $\nabla_{\Ad(P)}$ and~$\nabla_{E_0}$.

Since $D^{\nabla_{\Ad(P)}}$ is a linear elliptic operator on a compact manifold $X$, it has finite-dimensional kernel $\Ker(D^{\nabla_{\Ad(P)}})$ and cokernel $\Coker(D^{\nabla_{\Ad(P)}})$, where the {\it index\/} of $D^{\nabla_{\Ad(P)}}$ is $\ind(D^{\nabla_{\Ad(P)}})=\dim\Ker(D^{\nabla_{\Ad(P)}})-\dim\Coker(D^{\nabla_{\Ad(P)}})$. (Unless indicated otherwise, indices and dimensions will be over $\R$.) This index is independent of $\nabla_P\in\A_P$, so we write $\ind^{E_\bu}_P:=\ind(D^{\nabla_{\Ad(P)}})$. The {\it determinant\/} $\det(D^{\nabla_{\Ad(P)}})$ is the 1-dimensional real vector space
\e
\det(D^{\nabla_{\Ad(P)}})=\det\Ker(D^{\nabla_{\Ad(P)}})\ot\bigl(\det\Coker(D^{\nabla_{\Ad(P)}})\bigr)^*,
\label{or1eq3}
\e
where if $V$ is a finite-dimensional real vector space then $\det V=\La^{\dim V}V$.

These operators $D^{\nabla_{\Ad(P)}}$ vary continuously with $\nabla_P\in\A_P$, so they form a family of elliptic operators over the base topological space $\A_P$. Thus as in Knudsen--Mumford \cite{KnMu}, Atiyah--Singer \cite{AtSi6}, and Quillen \cite{Quil}, there is a natural real line bundle $\hat L{}^{E_\bu}_P\ra\A_P$ with fibre $\hat L{}^{E_\bu}_P\vert_{\nabla_P}=\det(D^{\nabla_{\Ad(P)}})$ at each $\nabla_P\in\A_P$. It is equivariant under the actions of $\G_P$ and $\G_P/Z(G)$ on $\A_P$, and so pushes down to real line bundles $L^{E_\bu}_P\ra\B_P$, $\bar L^{E_\bu}_P\ra\ovB_P$ on the topological stacks $\B_P,\ovB_P$, with $L^{E_\bu}_P\cong\Pi_P^*(\bar L_P^{E_\bu})$. We call $L^{E_\bu}_P,\bar L^{E_\bu}_P$ the {\it determinant line bundles\/} of $\B_P,\ovB_P$. The restriction $\bar L^{E_\bu}_P\vert_{\ovB_P^\irr}$ is a topological real line bundle in the usual sense on the topological space~$\ovB_P^\irr$.

Define the {\it orientation bundle\/} $O^{E_\bu}_P$ of $\B_P$ by $O^{E_\bu}_P=(L^{E_\bu}_P\sm 0(\B_P))/(0,\iy)$. That is, we take the complement $L^{E_\bu}_P\sm 0(\B_P)$ of the zero section $0(\B_P)$ in $L^{E_\bu}_P$, and quotient by $(0,\iy)$ acting on the fibres of $L^{E_\bu}_P\sm 0(\B_P)\ra\B_P$ by multiplication. Then $L^{E_\bu}_P\ra\B_P$ descends to $\pi:O^{E_\bu}_P\ra\B_P$, which is a bundle with fibre $(\R\sm\{0\})/(0,\iy)\cong\{1,-1\}=\Z_2$, since $L^{E_\bu}_P\ra\B_P$ is a fibration with fibre $\R$. That is, $\pi:O^{E_\bu}_P\ra\B_P$ is a {\it principal\/ $\Z_2$-bundle}, in the sense of topological stacks. 

Similarly we define a principal $\Z_2$-bundle $\bar\pi:\bar O^{E_\bu}_P\ra\ovB_P$ from $\bar L^{E_\bu}_P$, and as $L^{E_\bu}_P\cong\Pi_P^*(\bar L_P^{E_\bu})$ we have a canonical isomorphism $O^{E_\bu}_P\cong\Pi_P^*(\bar O_P^{E_\bu})$. The fibres of $O^{E_\bu}_P\ra\B_P$, $\bar O^{E_\bu}_P\ra\ovB_P$ are orientations on the real line fibres of $L^{E_\bu}_P\ra\B_P$, $\bar L^{E_\bu}_P\ra\ovB_P$. The restriction $\bar O^{E_\bu}_P\vert_{\ovB^\irr_P}$ is a principal $\Z_2$-bundle on the topological space $\ovB^\irr_P$, in the usual sense.

We say that $(\B_P,E_\bu)$ is {\it orientable\/} if $O^{E_\bu}_P$ is isomorphic to the trivial principal $\Z_2$-bundle $\B_P\t\Z_2\ra\B_P$, and similarly for $(\ovB_P,E_\bu)$ and $\bar O^{E_\bu}_P$. An {\it orientation\/} $\om$ on $(\B_P,E_\bu)$ is an isomorphism $\om:O^{E_\bu}_P\,{\buildrel\cong\over\longra}\,\B_P\t\Z_2$ of principal $\Z_2$-bundles. If $\om$ is an orientation, we write $-\om$ for the opposite orientation. When $E_\bu$ is understood, we usually omit it, and refer just to orientability and orientations of $\B_P$ or $\ovB_P$ rather than $(\B_P,E_\bu)$ or~$(\ovB_P,E_\bu)$.

Since $\Pi_P:\B_P\ra\ovB_P$ is a fibration with fibre $[*/Z(G)]$, which is connected and simply-connected, and $O^{E_\bu}_P\cong\Pi_P^*(\bar O_P^{E_\bu})$, we see that $(\B_P,E_\bu)$ is orientable if and only if $(\ovB_P,E_\bu)$ is, and orientations of $(\B_P,E_\bu)$ and $(\ovB_P,E_\bu)$ correspond.

By characteristic class theory, $\B_P$ is orientable if and only if the first Stiefel--Whitney class $w_1(L^{E_\bu}_P)$ is zero in $H^1(\B_P,\Z_2)$, which may be identified with the equivariant cohomology group $H^1_{\G_P}(\A_P,\Z_2)$. As $\B_P$ is connected, if $\B_P$ is orientable it has exactly two orientations.

We also define the {\it normalized orientation bundle}, or {\it n-orientation bundle}, a principal $\Z_2$-bundle $\check O_P^{E_\bu}\ra\B_P$, by
\e
\check O_P^{E_\bu}=O_P^{E_\bu}\ot_{\Z_2}O_{X\t G}^{E_\bu}\vert_{[\nabla^0]}.
\label{or1eq4}
\e
That is, we tensor the $O_P^{E_\bu}$ with the orientation torsor $O_{X\t G}^{E_\bu}\vert_{[\nabla^0]}$ of the trivial principal $G$-bundle $X\t G\ra X$ at the trivial connection $\nabla^0$. A {\it normalized orientation}, or {\it n-orientation}, of $\B_P$ is an isomorphism $\check\om:\check O^{E_\bu}_P\,{\buildrel\cong\over\longra}\,\B_P\t\Z_2$. There is a natural n-orientation of $\B_{X\t G}$ at~$[\nabla^0]$.

Since we have natural isomorphisms
\e
\Ker(D^{\nabla^0_{\Ad(P)}})\cong\g\ot\Ker D,\qquad 
\Coker(D^{\nabla^0_{\Ad(P)}})\cong\g\ot\Coker D,
\label{or1eq5}
\e
we see that (using an orientation convention) there is a natural isomorphism
\e
L_P^{E_\bu}\vert_{[\nabla^0]}\cong(\det D)^{\ot^{\dim\g}}\ot(\La^{\dim\g}\g)^{\ot^{\ind D}},
\label{or1eq6}
\e
which yields
\e
O_{X\t G}^{E_\bu}\vert_{[\nabla^0]}\cong \Or(\det D)^{\ot^{\dim\g}}\ot_{\Z_2}\Or(\g)^{\ot^{\ind D}},
\label{or1eq7}
\e
where $\Or(\det D),\Or(\g)$ are the $\Z_2$-torsors of orientations on $\det D$ and $\g$. Thus, choosing orientations for $\det D$ and $\g$ gives an isomorphism $\check O_P^{E_\bu}\cong O_P^{E_\bu}$. (But see Remark \ref{or2rem3} for an important technical point about this.) 

N-orientation bundles are convenient because they behave nicely under the Excision Theorem, Theorem \ref{or3thm1} below. Note that $O^{E_\bu}_P$ is trivializable if and only if $\check O_P^{E_\bu}$ is, so for questions of orientability there is no difference.	
\end{dfn}

We can now state the central problem we consider in this paper: 

\begin{prob} In the situation of Definition\/ {\rm\ref{or1def2},} we can ask: 
\begin{itemize}
\setlength{\itemsep}{0pt}
\setlength{\parsep}{0pt}
\item[{\bf(a)}] {\bf(Orientability.)\!} Under what conditions on $X,G,P,E_\bu$ is $\B_P$ orientable?
\item[{\bf(b)}] {\bf(Canonical orientations.)} If\/ $\B_P$ is orientable, then possibly after choo\-sing a small amount of extra data on $X,$ can we construct a natural orientation (or n-orientation) $\om_P$ on~$\B_P$?
\item[{\bf(c)}] {\bf(Relations between canonical orientations.)} Suppose $X$ and\/ $E_\bu$ are fixed, but we consider a family of pairs $(G_i,P_i)$ for $i\in I$. Then there may be natural relations between moduli spaces $\B_{P_i}$ and their orientation bundles $O^{E_\bu}_{P_i},$ which allow us to compare orientations on different $\B_{P_i}$. Can we construct natural orientations (or n-orientations)\/ $\om_{P_i}$ on $\B_{P_i}$ for $i\in I$ as in {\bf(b)\rm,} such that under each relation between moduli spaces $\B_{P_i},$ the $\om_{P_i}$ are related by a sign $\pm 1$ given by an explicit formula?
\end{itemize}

Here is an example of what we have in mind in {\bf(c)}. Consider the family of all principal\/ $\U(m)$-bundles $P\ra X$ for all\/ $m\ge 1$. If\/ $P_1,P_2$ are $\U(m_1)$- and\/ $\U(m_2)$-bundles we can form the direct sum $P_1\op P_2,$ a principal\/ $\U(m_1+m_2)$-bundle. There is a natural morphism $\Phi_{P_1,P_2}:\B_{P_1}\t\B_{P_2}\ra \B_{P_1\op P_2}$ taking direct sums of connections, and we can construct a natural isomorphism
\begin{equation*}
\phi_{P_1,P_2}:O^{E_\bu}_{P_1}\bt_{\Z_2} O^{E_\bu}_{P_2}\longra\Phi_{P_1,P_2}^*\bigl(O^{E_\bu}_{P_1\op P_2}\bigr)
\end{equation*}
of principal\/ $\Z_2$-bundles on $\B_{P_1}\t\B_{P_2}$. Thus, if\/ $\om_{P_1},\om_{P_2},\om_{P_1\op P_2}$ are orientations on $\B_{P_1},\B_{P_2},\B_{P_1\op P_2},$ for some unique $\ep_{P_1,P_2}=\pm 1$ we have
\begin{equation*}
(\phi_{P_1,P_2})_*(\om_{P_1}\bt\om_{P_2})=\ep_{P_1,P_2}\cdot \Phi_{P_1,P_2}^*(\om_{P_1\op P_2}).
\end{equation*}
The aim is to construct orientations\/ $\om_P$ for all\/ $P$ such that\/ $\ep_{P_1,P_2}$ is given by an explicit formula, perhaps involving the Chern classes\/ $c_i(P_1),c_j(P_2)$. In good cases we might just arrange that\/ $\ep_{P_1,P_2}=1$ for all\/~$P_1,P_2$.
\label{or1prob}	
\end{prob}

\begin{rem}{\bf(Orientations on gauge theory moduli spaces.)} We will explain the following in detail in \S\ref{or4}. In gauge theory one studies moduli spaces $\M_P^{\rm ga}$ of (irreducible) connections $\nabla_P$ on a principal bundle $P\ra X$ satisfying some curvature condition, such as moduli spaces of instantons on oriented Riemannian 4-manifolds in Donaldson theory \cite{DoKr}. Under suitable genericity conditions, these moduli spaces $\M_P^{\rm ga}$ will be smooth manifolds.

Problem \ref{or1prob} is important for constructing orientations on such moduli spaces $\M_P^{\rm ga}$. There is a natural inclusion $\io:\M_P^{\rm ga}\hookra\ovB_P$ such that $\io^*(\bar L^{E_\bu}_P)\cong \det T^*\M_P^{\rm ga}$, for an elliptic complex $E_\bu$ on $X$ related to the curvature condition. Hence an orientation on $\ovB_P$, which is equivalent to an orientation on $\B_P$, pulls back under $\io$ to an orientation on $\M_P^{\rm ga}$. We can also use similar ideas to orient moduli spaces of connections $\nabla_P$ plus extra data, such as a Higgs field. 

Thus, constructing orientations as in Problem \ref{or1prob} is an essential part of any programme to define enumerative invariants by `counting' gauge theory moduli spaces, such as Casson invariants of 3-manifolds \cite{BoHe,Taub1}, Donaldson and Seiberg--Witten invariants of 4-manifolds \cite{DoKr,Morg,Nico}, and proposed invariants counting $G_2$-instantons on 7-manifolds with holonomy $G_2$ \cite{DoSe}, or $\Spin(7)$-instantons on 8-manifolds with holonomy $\Spin(7)$ or $\SU(4)$ \cite{BoJo,CaLe1,DoSe}.

There are already various results on Problem \ref{or1prob} in the literature, aimed at orienting gauge theory moduli spaces. The general method was pioneered by Donaldson \cite[Lem.~10]{Dona1}, \cite[\S 3]{Dona2}, \cite[\S 5.4 \& \S 7.1.6]{DoKr}, for moduli of instantons on 4-manifolds. We also mention Taubes \cite[\S 2]{Taub1} for 3-manifolds, Walpuski \cite[\S 6.1]{Walp1} for 7-manifolds, Cao and Leung \cite[\S 10.4]{CaLe1} and Mu\~noz and Shahbazi \cite{MuSh} for 8-manifolds, and Cao and Leung \cite[Th.~2.1]{CaLe2} for $8k$-manifolds.
\label{or1rem1}	
\end{rem}

We will mostly be interested in solving Problem \ref{or1prob} not just for a single choice of $X,G,P,E_\bu$, but for whole classes at once. We make this precise:

\begin{dfn} A {\it Gauge Orientation Problem\/} ({\it GOP\/}) is a problem of the following kind. We consider compact $n$-manifolds $X$ for fixed $n$, equipped with some particular kind of geometric structure $\T$, such that using $\T$ we can define a real elliptic operator $E_\bu$ on $X$ as in Definition \ref{or1def2}. We also choose a family $\cL$ of Lie groups $G$, such as $\cL=\{\SU(m):m=1,2,\ldots\}$. Then we seek to solve Problem \ref{or1prob} for all $X,G,P,E_\bu$ arising from geometric structures $(X,\T)$ of the chosen kind, and Lie groups~$G\in\cL$.

Often we aim to construct canonical (n-)orientations on all such $\B_P$, satisfying compatibility conditions comparing the (n-)orientations for different manifolds $X^+,X^-$ using the Excision Theorem (see Theorem \ref{or3thm1} and Problem~\ref{or3prob}).

\label{or1def3} 	
\end{dfn}

We give some examples of Gauge Orientation Problems:

\begin{ex} Here are some possibilities for $n$, the geometric structure $\T$, and elliptic operator $E_\bu$, which are all Diracians or twisted Diracians: 
\begin{itemize}
\setlength{\itemsep}{0pt}
\setlength{\parsep}{0pt}
\item[{\bf(a)}] Consider compact Riemannian $n$-manifolds $(X,g)$ for any fixed $n$, and let $E_\bu$ be the elliptic operator
\begin{equation*}
\d+\d^*:\Ga^\iy\bigl(\ts\bigop_{i=0}^{[n/2]}\La^{2i}T^*X\bigr)\longra \Ga^\iy\bigl(\ts\bigop_{i=0}^{[(n-1)/2]}\La^{2i+1}T^*X\bigr).
\end{equation*}
\item[{\bf(b)}] Consider compact, oriented Riemannian $n$-manifolds $(X,g)$ for $n=4k$, and let $E_\bu$ be the elliptic operator
\begin{equation*}
\d+\d^*:\Ga^\iy\bigl(\ts\bigop_{i=0}^{k-1}\La^{2i}T^*X\op\La^{2k}_+T^*X\bigr)\longra \Ga^\iy\bigl(\ts\bigop_{i=0}^{k-1}\La^{2i+1}T^*X\bigr),
\end{equation*}
where $\La^{2k}_+T^*X\subset \La^{2k}T^*X$ is the subbundle of $2k$-forms self-dual under the Hodge star.
\item[{\bf(c)}] Consider compact Riemannian $n$-manifolds $(X,g)$ for any fixed $n$ with a spin structure with real spin bundle $S\ra X$, and let $E_\bu$ be the Dirac operator $D:\Ga^\iy(S)\ra\Ga^\iy(S)$.
\item[{\bf(d)}] Consider compact, oriented Riemannian $n$-manifolds $(X,g)$ for $n=4k$ with a spin structure with real spin bundle $S=S_+\op S_-\ra X$, and let $E_\bu$ be the positive Dirac operator $D_+:\Ga^\iy(S_+)\ra\Ga^\iy(S_-)$.
\item[{\bf(e)}] Consider triples $(X,J,g)$ of a compact $n$-manifold $X$ for $n=2k$, an almost complex structure $J$ on $X$, and a Hermitian metric $g$ on $(X,J)$. Let $E_\bu$ be the elliptic operator
\begin{equation*}
\bar\partial+\bar\partial^*:\Ga^\iy\bigl(\ts\bigop_{i=0}^k\La^{0,2i}T^*X\bigr)\longra \Ga^\iy\bigl(\ts\bigop_{i=0}^{k-1}\La^{0,2i+1}T^*X\bigr).
\end{equation*}
\end{itemize}

For example, solving GOP (b) with $n=4$ would give orientations for moduli spaces of anti-self-dual instantons on 4-manifolds \cite{DoKr}. Solving GOP (c) with $n=7$ would give orientations for moduli spaces of $G_2$-instantons, as in \S\ref{or429}. Solving GOP (d) with $n=8$ would give orientations for moduli spaces of $\Spin(7)$-instantons, as in~\S\ref{or4210}. 
\label{or1ex1}	
\end{ex}

In this paper we first collect together in \S\ref{or2} some results and methods for solving Problem \ref{or1prob}. Some of these are new, and some have been used in the literature in particular cases, but we state them in general. Section \ref{or3} discusses techniques for solving Gauge Orientation Problems. Finally, \S\ref{or4} applies the results of \S\ref{or2}--\S\ref{or3} to prove new results on orientability and canonical orientations for interesting families of gauge theory moduli spaces, and reviews the main results of the sequels~\cite{CGJ,JoUp}.

An important motivation for this paper was the first author's new theory \cite{Joyc6} defining vertex algebra structures on the homology $H_*(\M)$ of certain moduli stacks $\M$ in Algebraic Geometry and Differential Geometry. For fixed $X,E_\bu$, an ingredient required in one version of this theory is canonical orientations on moduli stacks $\B_P$ for all principal $\U(m)$-bundles $P\ra X$ and all $m\ge 1$ as in Problem \ref{or1prob}(b), satisfying relations under direct sum as in Problem \ref{or1prob}(c), and the theory dictates the structure of these relations.
\medskip

\noindent{\it Acknowledgements.} This research was partly funded by a Simons Collaboration Grant on `Special Holonomy in Geometry, Analysis and Physics'. The second author was partially supported by JSPS Grant-in-Aid for Scientific Research number JP16K05125. The third author was funded by DFG grants UP 85/2-1 of the DFG priority program SPP 2026 `Geometry at Infinity' and UP 85/3-1. The authors would like to thank Yalong Cao, Aleksander Doan, Simon Donaldson, Sebastian Goette, Andriy Haydys, Vicente Mu\~noz, Johannes Nordstr\"om, Cliff Taubes, Richard Thomas, and Thomas Walpuski for helpful conversations. 

\section{Results and methods for solving Problem \ref{or1prob}}
\label{or2}

\subsection{Remarks on the definitions in \S\ref{or1}}
\label{or21}

Here are some remarks on Definitions \ref{or1def1} and \ref{or1def2}, omitted from \S\ref{or1} for brevity.

\begin{rem}{\bf(i)} There is a theory of topological stacks, due to Metzler \cite{Metz} and Noohi \cite{Nooh1,Nooh2}. Topological stacks form a 2-category $\TopSta$, with homotopy category $\Ho(\TopSta)$. The category of topological spaces $\Top$ has a full and faithful embedding $I:\Top\hookra\Ho(\TopSta)$, so we can consider topological spaces to be examples of topological stacks. There is also a functor $\Pi:\Ho(\TopSta)\ra\Top$ mapping a topological stack $S$ to its {\it coarse moduli space} $S^\coa$ \cite[\S 4.3]{Nooh1}, with $\Pi\ci I\cong\Id_\Top$. Thus, we can regard a topological stack $S$ as a topological space $S^\coa$ with extra structure.

The most important extra structure is {\it isotropy groups}. If $S$ is a topological stack, and $s$ is a point of $S$ (i.e.\ a point of $S^\coa$) we have an isotropy group $\Iso_S(s)$, a topological group, with $\Iso_S(s)=\{1\}$ if $S$ is a topological space. 

If $T$ is a topological space and $H$ a topological group acting continuously on $T$ we can form a {\it quotient stack\/} $[T/H]$ in $\TopSta$, with $[T/H]^\coa$ the quotient topological space $T/H$. Points of $[T/H]$ correspond to $H$-orbits $tH$ in $T$, and the isotropy groups are $\Iso_{[T/H]}(tH)\cong\Stab_H(t)$.

For the quotient topological stacks $\B_P=[\A_P/\G_P]$, $\ovB_P=[\A_P/(\G_P/Z(G))]$ the points are $\G_P$-orbits $[\nabla_P]$ in $\A$, and the isotropy groups are 
\begin{equation*}
\Iso_{\B_P}([\nabla_P])\cong \Stab_{\G_P}(\nabla_P),\quad
\Iso_{\ovB_P}([\nabla_P])\cong\Stab_{\G_P}(\nabla_P)/Z(G).
\end{equation*}
Since $\ovB_P^\irr$ has trivial isotropy groups as a topological stack, it is actually a topological space, and we do not need topological stacks to study $\ovB_P^\irr$. 
\smallskip

\noindent{\bf(ii)} As in Definition \ref{or1def1} the inclusion $\A_P^\irr\hookra\A_P$ is a weak homotopy equivalence, so the inclusions $\B_P^\irr\hookra\B_P$, $\ovB_P^\irr\hookra\ovB_P$ are weak homotopy equivalences of topological stacks in the sense of Noohi \cite{Nooh2}. Also $\Pi_P:\B_P\ra\ovB_P$ identifies orientations on $\B_P$ and $\ovB_P$. Therefore, for the algebraic topological questions that concern us, working on $\ovB^\irr_P$ and on $\B_P$ are essentially equivalent, so we could just restrict our attention to the topological space $\ovB^\irr_P$, and not worry about topological stacks at all, following most other authors in the area.

The main reason we do not do this is that to relate orientations on different moduli spaces we consider direct sums of connections, which are generally reducible, so restricting to irreducible connections would cause problems.
\smallskip

\noindent{\bf(iii)} Here is why we sometimes need $\B_P,\ovB_P$ to be topological stacks rather than topological spaces. We will be studying certain {\it real line bundles\/} $L\ra\ovB_P$. A line bundle $L\ra\ovB_P$ is equivalent to a $\G_P/Z(G)$-equivariant line bundle $L'\ra\A_P$. At each point $[\nabla_P]$ in $\ovB_P$ the fibre $L_{\nabla_P}$ is a 1-dimensional real vector space, and the isotropy group $\Iso_{\ovB_P}([\nabla_P])$ has a natural representation on~$L_{\nabla_P}$.

Under some circumstances, this representation of $\Iso_{\ovB_P}([\nabla_P])$ on $L_{\nabla_P}$ may not be trivial. Then $L$ does not descend to the coarse moduli space $\ovB_P^\coa$. That is, if we consider $\ovB_P=[\A_P/(\G_P/Z(G))]$ as a topological space rather than a topological stack, then the orientation line bundles we are interested in {\it may not exist on the topological space\/} $\ovB_P$, though they are defined on~$\ovB_P^\irr\subset\ovB_P$.
\label{or2rem1}
\end{rem}

\begin{rem}{\bf(i)} Up to continuous isotopy (and hence up to isomorphism), $L^{E_\bu}_P,O^{E_\bu}_P$ and $\bar L^{E_\bu}_P,\bar O^{E_\bu}_P$ in Definition \ref{or1def2} depend on the elliptic operator $D:\Ga^\iy(E_0)\ra\Ga^\iy(E_1)$ up to continuous deformation amongst elliptic operators, and hence only on the symbol $\si(D)$ of~$D$.

This can mean that superficially different geometric problems have isomorphic orientation bundles, or that orientations depend on less data than you think. For example, as in \S\ref{or429}--\S\ref{or4210} orientations for moduli spaces of $G_2$-instantons on a $G_2$-manifold $(X,\vp,g)$, or of $\Spin(7)$-instantons on a $\Spin(7)$-manifold $(X,\Om,g)$, depend only on the underlying compact spin 7- or 8-manifold $X$, not on the $G_2$-structure $(\vp,g)$ or $\Spin(7)$-structure~$(\Om,g)$.
\smallskip

\noindent{\bf(ii)} For orienting moduli spaces of `instantons' in gauge theory, as in \S\ref{or4}, we usually start not with an elliptic operator on $X$, but with an {\it elliptic complex\/}
\e
\xymatrix@C=28pt{ 0 \ar[r] & \Ga^\iy(E_0) \ar[r]^{D_0} & \Ga^\iy(E_1) \ar[r]^(0.55){D_1} & \cdots \ar[r]^(0.4){D_{k-1}} & \Ga^\iy(E_k) \ar[r] & 0. }
\label{or2eq1}
\e
If $k>1$ and $\nabla_P$ is an arbitrary connection on a principal $G$-bundle $P\ra X$ then twisting \eq{or2eq1} by $(\Ad(P),\nabla_{\Ad(P)})$ as in \eq{or1eq2} may not yield a complex (that is, we may have $D^{\nabla_{\Ad(P)}}_{i+1}\ci D^{\nabla_{\Ad(P)}}_i\ne 0$), so the definition of $\det(D_\bu^{\nabla_{\Ad(P)}})$ does not work, though it does work if $\nabla_P$ satisfies the appropriate instanton-type curvature condition. To get round this, we choose metrics on $X$ and the $E_i$, so that we can take adjoints $D_i^*$, and replace \eq{or2eq1} by the elliptic operator
\e
\xymatrix@C=90pt{ \Ga^\iy\bigl(\bigop_{0\le i\le k/2}E_{2i}\bigr) \ar[r]^(0.48){\sum_i(D_{2i}+D_{2i-1}^*)} & \Ga^\iy\bigl(\bigop_{0\le i< k/2}E_{2i+1}\bigr), }
\label{or2eq2}
\e
and then Definition \ref{or1def2} works with \eq{or2eq2} in place of~$E_\bu$.
\label{or2rem2}	
\end{rem}

\begin{rem} In \eq{or1eq4} we defined the n-orientation bundle $\check O_P^{E_\bu}$ in terms of $O_{X\t G}^{E_\bu}\vert_{[\nabla^0]}$, for which we gave a formula in \eq{or1eq7} involving $\Or(\g)$, and said that choosing orientations on $\det D$ and $\g$ gives an isomorphism~$\check O_P^{E_\bu}\cong O_P^{E_\bu}$.

While all this makes sense, for it to be well behaved, {\it we need the orientation on\/ $\g$ to be invariant under the adjoint action of\/ $G$ on\/} $\g$, and this is not true for all Lie groups $G$. For example, if $G={\rm O}(2m)$ and $\ga\in{\rm O}(2m)\sm\SO(2m)$ then $\Ad(\ga)$ is orientation-reversing on $\g$, so no $\Ad(G)$-invariant orientation exists on $\g$. If we restrict to {\it connected\/} Lie groups $G$ then $\Ad(G)$ is automatically orientation-preserving on $\g$, and this problem does not arise.

Let $X$ and $E_\bu$ be as in Definition \ref{or1def2}. Take $P$ to be the trivial principal ${\rm O}(2m)$-bundle over $X$. Consider the topological stack $\B_P$, determinant line bundle $L_P^{E_\bu}\ra\B_P$, and orientation bundle $O_P^{E_\bu}\ra\B_P$ from \S\ref{or1}. The isotropy group of the stack $\B_P$ at $[\nabla^0]$ is $\Iso_{\B_P}([\nabla^0])={\rm O}(2m)$, and its action on $L_P^{E_\bu}\vert_{[\nabla^0]}$ in \eq{or1eq6} is induced by the action of $\Ad(G)$ on $\g$. Thus $\ga\in{\rm O}(2m)\sm\SO(2m)$ acts on $L_P^{E_\bu}\vert_{[\nabla^0]}$ and $O_P^{E_\bu}\vert_{[\nabla^0]}$ by multiplication by $(-1)^{\ind D}$, where~$E_\bu=(E_0,E_1,D)$.

Now suppose $\ind D$ is odd. Then $\ga\in{\rm O}(2m)\sm\SO(2m)$ acts on $O_P^{E_\bu}\vert_{[\nabla^0]}$ by multiplication by $-1$. Any orientation on $\B_P$ must restrict at $[\nabla^0]$ to an ${\rm O}(2m)$-invariant trivialization of $O_P^{E_\bu}\vert_{[\nabla^0]}$. Thus $\B_P$ {\it is not orientable}.
\label{or2rem3}
\end{rem}

\subsection{Elementary results on orientation bundles}
\label{or22}

We now give some results and constructions for orientation bundles $O^{E_\bu}_P$ in Definition \ref{or1def2}, and for answering Problem \ref{or1prob}. Many of these are fairly obvious, or are already used in the references in Remark \ref{or1rem1}, but some are new.

\subsubsection{Simply-connected moduli spaces $\B_P$ are orientable}
\label{or221}

As principal $\Z_2$-bundles on $\B_P$ are trivial if $H^1(\B_P,\Z_2)=0$, we have:

\begin{lem} In Definition\/ {\rm\ref{or1def2},} if\/ $\B_P$ is simply-connected, or more generally if\/ $H^1(\B_P,\Z_2)=0,$ then $\B_P$ is orientable, and n-orientable.
\label{or2lem1}	
\end{lem}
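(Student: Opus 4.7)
The lemma is essentially a formal consequence of characteristic class theory for topological stacks, already invoked in Definition \ref{or1def2}. The plan is to reduce both orientability and n-orientability to the classification of principal $\Z_2$-bundles by $H^1(-,\Z_2)$, and then note that the simply-connected hypothesis feeds into the second, more general, hypothesis via standard algebraic topology.

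First I would record the classification result: for a topological stack $B$ (in particular $B=\B_P$), isomorphism classes of principal $\Z_2$-bundles are in bijection with $H^1(B,\Z_2)$, via the first Stiefel--Whitney class. This is stated in Definition \ref{or1def2} in the form that $\B_P$ is orientable iff $w_1(L^{E_\bu}_P)=0$ in $H^1(\B_P,\Z_2)$. Hence, if $H^1(\B_P,\Z_2)=0$, then the only principal $\Z_2$-bundle up to isomorphism is the trivial one, so the class of $O^{E_\bu}_P$ must vanish and $O^{E_\bu}_P\cong\B_P\t\Z_2$. Choosing such an isomorphism gives an orientation, proving orientability.

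For n-orientability, I would observe that the n-orientation bundle $\check O^{E_\bu}_P=O^{E_\bu}_P\ot_{\Z_2}O^{E_\bu}_{X\t G}\vert_{[\nabla^0]}$ is still a principal $\Z_2$-bundle on $\B_P$: the second tensor factor is a single $\Z_2$-torsor (a constant coefficient) and tensoring a principal $\Z_2$-bundle with a fixed $\Z_2$-torsor preserves the structure of principal $\Z_2$-bundle. Hence the same vanishing argument applied to $\check O^{E_\bu}_P$ yields an n-orientation. Alternatively, since $O^{E_\bu}_P$ and $\check O^{E_\bu}_P$ differ by a locally constant $\Z_2$-torsor on the connected stack $\B_P$, triviality of one is equivalent to triviality of the other (as remarked after \eqref{or1eq7}).

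Finally, I would reduce the simply-connected case to the cohomological one. Since $\B_P$ is (path-)connected, the Hurewicz theorem gives $H_1(\B_P,\Z)=\pi_1(\B_P)^{\rm ab}$, which vanishes when $\B_P$ is simply-connected. The universal coefficient theorem then yields $H^1(\B_P,\Z_2)\cong\Hom(H_1(\B_P,\Z),\Z_2)=0$, reducing this case to the previous one. The only mild subtlety is that these algebraic-topological facts must be applied to the topological stack $\B_P$ rather than a topological space, but by Remark \ref{or2rem1}(ii) the inclusion $\ovB^\irr_P\hookra\ovB_P$ and the fibration $\Pi_P:\B_P\ra\ovB_P$ with connected, simply-connected fibre $[*/Z(G)]$ identify $H^1(\B_P,\Z_2)\cong H^1(\ovB^\irr_P,\Z_2)$, so one may carry out the argument on the topological space $\ovB^\irr_P$ if desired. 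There is no real obstacle here; the work has been done by the classification theorem cited earlier.
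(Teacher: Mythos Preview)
Your proof is correct and takes essentially the same approach as the paper, which dispatches the lemma in a single sentence preceding its statement: ``As principal $\Z_2$-bundles on $\B_P$ are trivial if $H^1(\B_P,\Z_2)=0$, we have:''. Your version simply unpacks this more carefully, adding the Hurewicz/universal-coefficients reduction from the simply-connected case and the observation that $\check O^{E_\bu}_P$ differs from $O^{E_\bu}_P$ by a constant torsor, both of which the paper leaves implicit.
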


Thus, if we can show $\pi_1(\B_P)=\{1\}$ using algebraic topology, then orientability in Problem \ref{or1prob}(a) follows. This is used in Donaldson \cite[Lem.~10]{Dona1}, \cite[\S 5.4]{DoKr}, Cao and Leung \cite[\S 10.4]{CaLe1}, \cite[Th.~2.1]{CaLe2}, and Mu\~noz and Shahbazi~\cite{MuSh}.

\subsubsection{Standard orientations for trivial connections}
\label{or222}

In Definition \ref{or1def2}, let $P=X\t G$ be the trivial principal $G$-bundle over $X$, and write $\nabla^0\in\A_P$ for the trivial connection. Then \eq{or1eq7} gives a formula for $O_{X\t G}^{E_\bu}\vert_{[\nabla^0]}$. Thus, if we fix an orientation for $\g$ if $\ind D$ is odd, and an orientation for $\det D$ if $\dim\g$ is odd, then we obtain an orientation on $\B_P=\B_{X\t G}$ at the trivial connection $[\nabla^0]$. We will call this the {\it standard orientation}. If $\B_P$ is orientable, the standard orientation determines an orientation on all of~$\B_P$.

\subsubsection{Natural orientations when $G$ is abelian}
\label{or223}

In Definition \ref{or1def2}, suppose the Lie group $G$ is abelian (e.g.\ $G=\U(1)$). Then the adjoint action of $G$ on $\g$ is trivial, so $\Ad(P)\ra X$ is the trivial vector bundle $X\t\g\ra X$, and $\nabla_{\Ad(P)}$ is the trivial connection. Thus as in \eq{or1eq5}
\begin{equation*}
\Ker(D^{\nabla_{\Ad(P)}})=\g\ot\Ker D\quad\text{and}\quad \Coker(D^{\nabla_{\Ad(P)}})=\g\ot\Coker D.
\end{equation*}
Hence as in \eq{or1eq6}, $L_P^{E_\bu}\ra\B_P$ is the trivial line bundle with fibre
\begin{equation*}
(\det D)^{\ot^{\dim\g}}\ot(\La^{\dim\g}\g)^{\ot^{\ind D}},
\end{equation*}
so $\B_P$ is orientable. If we choose an orientation for $\g$ if $\ind D$ is odd, and an orientation for $\det D$ (equivalently, an orientation for $\Ker D\op\Coker D$) if $\dim\g$ is odd, then we obtain a natural orientation on $\B_P$ for any principal $G$-bundle $P\ra X$. Also $\B_P$ has a canonical n-orientation, independent of choices.

\subsubsection{Natural orientations from complex structures on $E_\bu$ or $G$}
\label{or224}

The next theorem is easy to prove, but very useful.

\begin{thm} In Definition\/ {\rm\ref{or1def2},} suppose that\/ $E_0,E_1$ have the structure of complex vector bundles, such that the symbol of\/ $D$ is complex linear. We will call this a \begin{bfseries}complex structure on\end{bfseries} $E_\bu$. Then for any Lie group $G$ and principal\/ $G$-bundle $P\ra X,$ we can define a canonical orientation $\om_P$ and a canonical n-orientation $\check\om_P$ on $\B_P,$ that is, we define trivializations\/ $\om_P:O_P^{E_\bu}\,{\buildrel\cong\over\longra}\,\B_P\t\Z_2$ and\/~$\check\om_P:\check O_P^{E_\bu}\,{\buildrel\cong\over\longra}\,\B_P\t\Z_2$.
\label{or2thm1}
\end{thm}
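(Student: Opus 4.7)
The plan is to exploit the fact that the orientation bundle depends only on the symbol of the twisted operator, as noted in Remark \ref{or2rem2}(i), together with the classical fact that complex vector spaces have canonical real orientations. First I would observe that a complex structure on $E_\bu$ induces a complex structure on each $\Ad(P)\ot E_i$ (tensoring the real bundle $\Ad(P)$ with the complex bundle $E_i$), and I would decompose the twisted operator $D^{\nabla_{\Ad(P)}}$ into a complex-linear part $D^{\nabla_{\Ad(P)},c}$ and a complex-antilinear part $D^{\nabla_{\Ad(P)},a}$. Since the symbol of $D$ is complex linear, the symbol of $D^{\nabla_{\Ad(P)},a}$ vanishes, so this antilinear part is a lower-order operator. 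In particular $D^{\nabla_{\Ad(P)},c}$ shares the symbol of $D^{\nabla_{\Ad(P)}}$ and is elliptic.

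Next I would consider, for each $\nabla_P\in\A_P$, the one-parameter family
\e
D^{\nabla_{\Ad(P)}}_t=D^{\nabla_{\Ad(P)},c}+t\cdot D^{\nabla_{\Ad(P)},a},\qquad t\in[0,1],
\e
of elliptic operators with constant symbol. By the Knudsen--Mumford--Quillen theory, the determinants $\det(D^{\nabla_{\Ad(P)}}_t)$ form a continuous line bundle over $[0,1]$, so they are canonically identified up to continuous deformation. Thus the fibre $\hat L{}^{E_\bu}_P\vert_{\nabla_P}=\det(D^{\nabla_{\Ad(P)}}_1)$ is canonically isomorphic, up to continuous isotopy, to $\det(D^{\nabla_{\Ad(P)}}_0)=\det(D^{\nabla_{\Ad(P)},c})$. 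Because $D^{\nabla_{\Ad(P)},c}$ is complex linear, its kernel and cokernel are complex vector spaces, and any finite-dimensional complex vector space $V$ carries the canonical real orientation on $V=V_\R$ coming from the rule that for a complex basis $v_1,\ldots,v_m$ the real basis $v_1,iv_1,\ldots,v_m,iv_m$ is positive. This endows $\det(D^{\nabla_{\Ad(P)},c})$, and hence $\hat L{}^{E_\bu}_P\vert_{\nabla_P}$, with a canonical orientation.

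The third step is to show these fibrewise orientations assemble into a global trivialization $\om_P:O^{E_\bu}_P\,{\buildrel\cong\over\longra}\,\B_P\t\Z_2$. The decomposition $D^{\nabla_{\Ad(P)}}=D^{\nabla_{\Ad(P)},c}+D^{\nabla_{\Ad(P)},a}$ is natural in $\nabla_P$ and equivariant under $\G_P$ (gauge transformations preserve the complex structure on $\Ad(P)\ot E_i$ coming from $E_i$), so both the homotopy $D^{\nabla_{\Ad(P)}}_t$ and the complex orientations vary continuously with $\nabla_P$ and descend from $\A_P$ to $\B_P$. This yields $\om_P$, and by the same formula an n-orientation $\check\om_P$ via \eqref{or1eq4}: applied to the trivial bundle $X\t G$ at $\nabla^0$, the construction produces a distinguished element of $O^{E_\bu}_{X\t G}\vert_{[\nabla^0]}$, and tensoring gives $\check\om_P$.

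The most delicate step will be checking compatibility with the $\G_P$-action so that $\om_P$ genuinely descends to $\B_P$ (this is where the reality of $\Ad(P)$ matters: gauge transformations act by real isomorphisms of $\Ad(P)$, but these become complex-linear automorphisms of $\Ad(P)\ot E_i$, so they commute with the complex splitting of $D^{\nabla_{\Ad(P)}}$ and preserve the complex orientations). One also needs to verify, in view of Remark \ref{or2rem3}, that the isotropy groups $\Stab_{\G_P}(\nabla_P)$ act trivially on $\hat L{}^{E_\bu}_P\vert_{\nabla_P}$ under the identification with $\det(D^{\nabla_{\Ad(P)},c})$; this follows because any stabilizer acts complex-linearly, hence orientation-preservingly, on the complex kernel and cokernel. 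Independence of the chosen decomposition is automatic, since the complex-linear part of $D$ is unique.
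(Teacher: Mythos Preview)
Your proof is correct and follows essentially the same strategy as the paper's proof: deform the twisted operator through elliptic operators with the same symbol to a complex-linear operator, then use the canonical real orientation of complex vector spaces on its kernel and cokernel. The only minor difference is that you build the deformation explicitly at the level of the twisted operator, splitting $D^{\nabla_{\Ad(P)}}$ into its complex-linear and complex-antilinear parts and interpolating linearly, whereas the paper first deforms the untwisted operator $D$ to a $\C$-linear $D^1$ inside the affine space of operators with symbol $\si(D)$, then twists; independence of choices in the paper's version follows from contractibility of that affine space, while in your version it follows from uniqueness of the linear/antilinear decomposition. Your added remark that isotropy groups act $\C$-linearly on the complex kernel and cokernel, hence orientation-preservingly, is a nice touch that addresses the concern of Remark~\ref{or2rem3} explicitly.
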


\begin{proof} As $E_0,E_1$ are complex vector bundles, $\Ga^\iy(E_0),\Ga^\iy(E_1)$ are complex vector spaces. First suppose $D:\Ga^\iy(E_0)\ra\Ga^\iy(E_1)$ is $\C$-linear. Then $\Ad(P)\ot E_0,\Ad(P)\ot E_1$ are also complex vector bundles, and $D^{\nabla_{\Ad(P)}}$ in \eq{or1eq2} is $\C$-linear, so $\Ker(D^{\nabla_{\Ad(P)}})$ and $\Coker(D^{\nabla_{\Ad(P)}})$ are finite-dimensional complex vector spaces. With an appropriate orientation convention, the complex structures induce a natural orientation on $\det(D^{\nabla_{\Ad(P)}})$ in \eq{or1eq3}, which varies continuously with $\nabla_P$ in $\B_P$. This gives a canonical orientation for $\B_P$. To  get a canonical n-orientation, combine the orientations for $\B_P$ and $\B_{X\t G}$ using~\eq{or1eq4}.

If $D$ is not $\C$-linear, though $\si(D)$ is, we can deform $D=D^0$ continuously through elliptic operators $D^t:\Ga^\iy(E_0)\ra\Ga^\iy(E_1)$, $t\in[0,1]$ with symbols $\si(D^t)=\si(D)$ to $D^1$ which is $\C$-linear. As in Remark \ref{or2rem2}(i), the orientation bundle $O^{E^t_\bu}_P$ deforms continuously with $D^t$, so $O^{E_\bu}_P=O^{E^0_\bu}_P\cong O^{E^1_\bu}_P$, and the trivialization of $O^{E^1_\bu}_P$ from $D^1$ complex linear induces a trivialization of $O^{E_\bu}_P$. It is independent of choices, as the space of all $D^t$ with $\si(D^t)=\si(D)$ is an infinite-dimensional affine space, and so contractible, and the subset of $\C$-linear $D^1$ is connected. 
\end{proof}

\begin{ex} Let $(X,g)$ be a compact, oriented Riemannian manifold of dimension $4n+2$, and take $E_\bu$ to be the elliptic operator on $X$
\begin{equation*}
D=\d+\d^*:\Ga^\iy\bigl(\ts\bigop_{i=0}^{2n+1}\La^{2i}T^*X\bigr)\longra\Ga^\iy\bigl(\ts\bigop_{i=0}^{2n}\La^{2i+1}T^*X\bigr).	
\end{equation*}
Using the Hodge star $*$ we can define complex structures on the bundles $E_0=\La^{\rm even}T^*X$, $E_1=\La^{\rm odd}T^*X$ such that the symbol of $D$ is complex linear. So for these $X,E_\bu$ we have canonical (n-)orientations on $\B_P$ for all~$G,P$. 
\label{or2ex1}	
\end{ex}

\begin{ex}{\bf(a)} Let $(X,g)$ be a compact, oriented Riemannian $n$-manifold with a spin structure with real spinor bundle $S\ra X$, and let $E_\bu$ be the Dirac operator $D:\Ga^\iy(S)\ra\Ga^\iy(S)$. If $n\equiv 1,2,3,4$ or 5 mod 8 there is a complex structure on $S$ with the symbol of $D$ complex linear. Also if $n\equiv 6$ mod 8 there is a complex structure on the real spinor bundle $S$ with the symbol of $D$ complex {\it anti\/}-linear, so that $D:\Ga^\iy(S)\ra\Ga^\iy(\bar S)$ is complex linear. Hence for these $X,E_\bu$ we have canonical (n-)orientations on $\B_P$ for all $G,P$.
\smallskip

\noindent{\bf(b)} If $n\equiv 0$ or 4 mod 8 then $S=S_+\op S_-$, and we can take $E_\bu$ to be the positive Dirac operator $D_+:\Ga^\iy(S_+)\ra\Ga^\iy(S_-)$. If $n\equiv 4$ mod 8 there are complex structures on $S_\pm$ with the symbol of $D_+$ complex linear, and again we get canonical (n-)orientations.
\label{or2ex2}	
\end{ex}

See Theorems \ref{or4thm1}, \ref{or4thm2}, \ref{or4thm4} and \ref{or4thm8} for more applications of Theorem~\ref{or2thm1}.

In a similar way, if $G$ is a {\it complex\/} Lie group, such as $\SL(m,\C)$, then $\g$ is a complex vector space, $\Ad(P)$ is a complex vector bundle, and $\nabla_{\Ad(P)}$ is complex linear, so $D^{\nabla_{\Ad(P)}}$ in \eq{or1eq2} is complex linear, and as in Theorem \ref{or2thm1} we obtain a canonical orientation on $\B_P$ for all $X,E_\bu$ and principal $G$-bundles~$P$.

\subsubsection{Another case with natural orientations}
\label{or225}

In Definition \ref{or1def2}, suppose that $E_\bu$ is of the form $E_\bu=\ti E_\bu\op\ti E_\bu^*$, where $\ti E_\bu$ is a real linear elliptic operator on $X$, and $\ti E_\bu^*$ is the formal adjoint of $\ti E_\bu$ under some metrics on $X,\ti E_0,\ti E_1$. Then we have
\begin{align*}
L_P^{E_\bu}&\cong L_P^{\ti E_\bu}\ot_\R L_P^{\ti E_\bu^*}\cong L_P^{\ti E_\bu}\ot_\R (L_P^{\ti E_\bu})^*	\cong \B_P\t\R,\\
O_P^{E_\bu}&\cong O_P^{\ti E_\bu}\ot_{\Z_2} O_P^{\ti E_\bu^*}\cong O_P^{\ti E_\bu}\ot_{\Z_2} (O_P^{\ti E_\bu})^* \cong \B_P\t\Z_2.
\end{align*}
Thus $\B_P$ has a canonical orientation for any principal $G$-bundle $P\ra X$, for any $G$. Since orientation bundles depend only on the symbol of $E_\bu$, and this up to continuous isotopy, this is also true if $E_\bu=\ti E_\bu\op\ti E_\bu^*$ holds only at the level of symbols, up to continuous isotopy.

In \S\ref{or425} we will use this method to show that moduli spaces $\M_P^{\rm VW}$ of solutions to the Vafa--Witten equations on 4-manifolds have canonical orientations.

\subsubsection{\texorpdfstring{Quotienting $X$ by a free $\U(1)$-action}{Quotienting X by a free U(1)-action}}
\label{or226}

As in Definition \ref{or1def2}, let $X$ be a compact $n$-manifold and $E_\bu=(E_0,E_1,D)$ a real elliptic operator on $X$. Suppose the Lie group $\U(1)$ acts freely on $X$, and the action lifts to $E_0,E_1$ making $D$ $\U(1)$-equivariant. Then $Y=X/\U(1)$ is a compact $(n-1)$-manifold, with projection $\pi:X\ra Y$ a principal $\U(1)$-bundle. 

Now $E_\bu$ pushes down to a real elliptic operator $F_\bu=(F_0,F_1,\hat D)$ on $Y$, such that there are natural isomorphisms $E_i\cong \pi^*(F_i)$ for $i=0,1$ inducing isomorphisms $\Ga^\iy(F_i)\cong\Ga^\iy(E_i)^{\U(1)}$ between sections of $F_i$ on $Y$ and $\U(1)$-invariant sections of $E_i$ on $X$, which identify $\hat D:\Ga^\iy(F_0)\ra\Ga^\iy(F_1)$ with $D^{\U(1)}:\Ga^\iy(E_0)^{\U(1)}\ra\Ga^\iy(E_1)^{\U(1)}$. Note that $\hat D$ does not determine $D$, as it has no information on the derivatives in $D$ in the direction of the fibres of~$\pi$.

Let $G$ be a Lie group, and $Q\ra Y$ a principal $G$-bundle. Define $P=\pi^*(Q)$. Then $P\ra X$ is a principal $G$-bundle, with a lift of the $\U(1)$-action on $X$ to $P$. If $\nabla_Q$ is a connection on $Q$ then $\nabla_P=\pi^*(\nabla_Q)$ is a connection on $P$, which is $\U(1)$-equivariant. This defines an injective map $\pi^*:\B_Q\ra\B_P$ of topological stacks. We have orientation bundles $O_P^{E_\bu}\ra\B_P$ and~$O_Q^{F_\bu}\ra\B_Q$.

The next proposition will be used in \cite[Ex.~1.14]{CGJ} to give interesting examples of non-orientable moduli spaces in 8 dimensions.

\begin{prop} In the situation above, there is a natural isomorphism $O_Q^{F_\bu}\cong(\pi^*)^*(O_P^{E_\bu})$ of principal\/ $\Z_2$-bundles on $\B_Q$. Hence, if\/ $\B_P$ is orientable then\/ $\B_Q$ is orientable. Conversely, if\/ $\B_Q$ is not orientable, then\/ $\B_P$ is not orientable.
\label{or2prop1}	
\end{prop}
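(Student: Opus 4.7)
The key observation is that for any $\nabla_Q\in\A_Q$, the pullback connection $\nabla_P=\pi^*(\nabla_Q)$ on $P=\pi^*(Q)$ is $\U(1)$-invariant, so the twisted operator $D^{\nabla_{\Ad(P)}}$ on $X$ is $\U(1)$-equivariant. My plan is to decompose its finite-dimensional kernel and cokernel into $\U(1)$-isotypic components, identify the weight-$0$ part with those of $\hat D^{\nabla_{\Ad(Q)}}$ on $Y$, and exploit the fact that the non-trivial weight components naturally assemble into complex vector spaces, hence carry canonical real orientations.

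Fix $\nabla_Q$ and set $\nabla_P=\pi^*\nabla_Q$. Since $E_i$ is $\U(1)$-equivariant and $\Ad(P)\cong\pi^*\Ad(Q)$ is naturally $\U(1)$-equivariant as a pullback bundle, the $\U(1)$-action on $\Ga^\iy(\Ad(P)\ot E_i)$ gives, after complexification, a weight decomposition $\ts\bigoplus_{k\in\Z}V_k^{(i)}$ with complex conjugation intertwining $V_k^{(i)}$ and $V_{-k}^{(i)}$. The invariant real subspace is identified via $\pi^*$ with $\Ga^\iy(\Ad(Q)\ot F_i)$, and $D^{\nabla_{\Ad(P)}}$ restricts there to $\hat D^{\nabla_{\Ad(Q)}}$. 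The non-invariant real subspace $\bigoplus_{k>0}(V_k^{(i)}\oplus V_{-k}^{(i)})^{\R}$ is canonically isomorphic, via $v+\bar v\mapsto v$, to the complex vector space $\bigoplus_{k>0}V_k^{(i)}$, and thereby inherits a canonical complex structure. Since $D^{\nabla_{\Ad(P)}}$ preserves weights, its kernel and cokernel split as
\[
\Ker D^{\nabla_{\Ad(P)}}\cong\Ker\hat D^{\nabla_{\Ad(Q)}}\op K_{\neq 0},\qquad \Coker D^{\nabla_{\Ad(P)}}\cong\Coker\hat D^{\nabla_{\Ad(Q)}}\op C_{\neq 0},
\]
with $K_{\neq 0},C_{\neq 0}$ complex vector spaces, hence canonically oriented as real vector spaces. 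Tensoring yields a canonical isomorphism
\[
\det\bigl(D^{\nabla_{\Ad(P)}}\bigr)\cong\det\bigl(\hat D^{\nabla_{\Ad(Q)}}\bigr)\ot\det{}_{\R}(K_{\neq 0})\ot\det{}_{\R}(C_{\neq 0})^*,
\]
in which the last two factors are canonically oriented, producing a canonical identification $O_Q^{F_\bu}\vert_{[\nabla_Q]}\cong(\pi^*)^*(O_P^{E_\bu})\vert_{[\nabla_Q]}$ of $\Z_2$-torsors.

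To promote this pointwise identification to an isomorphism of principal $\Z_2$-bundles on $\B_Q$, I would verify continuity in $\nabla_Q$ and $\G_Q$-equivariance. The $\U(1)$-action on sections is independent of $\nabla_Q$, so the splitting into invariant and non-invariant parts is continuous in families; individual $\dim K_k,\dim C_k$ may jump, but only in index-preserving matched pairs, and the canonical orientation on the non-invariant determinant line is preserved by applying the Knudsen--Mumford--Quillen determinant construction weight-by-weight. Gauge transformations $\ga\in\G_Q$ pull back to $\U(1)$-equivariant gauge transformations on $P$ preserving the isotypic decomposition and acting complex-linearly on non-invariant blocks, so the identification is $\G_Q$-equivariant. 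This yields the desired isomorphism $O_Q^{F_\bu}\cong(\pi^*)^*(O_P^{E_\bu})$. For the final statement: a trivialization of $O_P^{E_\bu}$ pulls back to a trivialization of $O_Q^{F_\bu}$, proving orientability of $\B_P$ implies orientability of $\B_Q$, and the contrapositive is the last claim.

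The main obstacle is continuity of the canonical orientation on the non-invariant part across points of $\B_Q$ where dimensions of individual weight-$k$ kernel/cokernel spaces jump. I would address this by the standard stabilization trick for determinant lines: locally on $\B_Q$, perturb $\hat D^{\nabla_{\Ad(Q)}}$ by a finite-rank map from an auxiliary bundle on $Y$; pulling this back along $\pi$ yields a $\U(1)$-equivariant stabilization of $D^{\nabla_{\Ad(P)}}$ supported in weight zero, leaving the non-invariant blocks untouched and exhibiting continuity of their canonical complex orientations directly.
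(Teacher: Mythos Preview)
Your proof is correct and follows essentially the same approach as the paper: decompose $\Ker D^{\nabla_{\Ad(P)}}$ and $\Coker D^{\nabla_{\Ad(P)}}$ into their $\U(1)$-invariant and non-invariant parts, identify the invariant parts with $\Ker\hat D^{\nabla_{\Ad(Q)}}$ and $\Coker\hat D^{\nabla_{\Ad(Q)}}$, and use the canonical complex structure on the non-invariant parts (from positive weights) to orient them. One small remark: your final paragraph's weight-zero stabilization does not actually address jumps in the non-invariant weight-$k$ pieces, but your earlier observation---applying the determinant-line construction weight-by-weight, where each nonzero-weight block is a continuous family of $\C$-linear operators with canonically oriented determinant---already handles continuity correctly, and the paper itself simply asserts continuity without further comment.
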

 
\begin{proof} Let $\nabla_Q$ be a connection on $Q$, and $\nabla_P=\pi^*(\nabla_Q)$. As in \eq{or1eq2}, consider the twisted elliptic operators 
\begin{align*}
D^{\nabla_{\Ad(P)}}&:\Ga^\iy(\Ad(P)\ot E_0)\longra\Ga^\iy(\Ad(P)\ot E_1),\\
\hat D{}^{\nabla_{\Ad(Q)}}&:\Ga^\iy(\Ad(Q)\ot F_0)\longra\Ga^\iy(\Ad(Q)\ot F_1).
\end{align*}
Since $D^{\nabla_{\Ad(P)}}$ is $\U(1)$-equivariant, $\Ker D^{\nabla_{\Ad(P)}}$ and $\Coker D^{\nabla_{\Ad(P)}}$ are finite-dimensional $\U(1)$-representations, and there are natural isomorphisms
\begin{equation*}
\Ker \hat D{}^{\nabla_{\Ad(Q)}}\cong(\Ker D^{\nabla_{\Ad(P)}})^{\U(1)},\quad
\Coker \hat D{}^{\nabla_{\Ad(Q)}}\cong(\Coker D^{\nabla_{\Ad(P)}})^{\U(1)}.
\end{equation*}

There are natural splittings of $\U(1)$-representations
\begin{align*}
\Ker D^{\nabla_{\Ad(P)}}&\cong(\Ker D^{\nabla_{\Ad(P)}})^{\U(1)}\op(\Ker D^{\nabla_{\Ad(P)}})^{\rm nt},\\
\Coker D^{\nabla_{\Ad(P)}}&\cong(\Coker D^{\nabla_{\Ad(P)}})^{\U(1)}\op(\Coker D^{\nabla_{\Ad(P)}})^{\rm nt},
\end{align*}
where $(\cdots)^{\rm nt}$ are nontrivial $\U(1)$-representations (have no trivial component). Now every real nontrivial $\U(1)$-representation has a unique complex vector space structure, such that $e^{i\th}\in\U(1)$ has eigenvalues $e^{ki\th}\in\C$ for $k>0$ only. Thus 
\begin{align*}
\Ker D^{\nabla_{\Ad(P)}}&\cong\Ker \hat D{}^{\nabla_{\Ad(Q)}}\op\text{(complex vector space),}\\
\Coker D^{\nabla_{\Ad(P)}}&\cong\Coker \hat D{}^{\nabla_{\Ad(Q)}}\op\text{(complex vector space).}
\end{align*}
As the complex vector spaces have natural orientations, we obtain natural isomorphisms of $\Z_2$-torsors
\begin{equation*}
(\pi^*)^*(O_P^{E_\bu})\vert_{[\nabla_Q]}=O_P^{E_\bu}\vert_{[\nabla_P]}\cong O_Q^{F_\bu}\vert_{[\nabla_Q]}.
\end{equation*}
These depend continuously on $[\nabla_Q]\in\B_Q$, and so give the required isomorphism $O_Q^{F_\bu}\cong(\pi^*)^*(O_P^{E_\bu})$. The rest of the proposition is immediate.	
\end{proof}

\subsubsection{Orientations on products of moduli spaces}
\label{or227}

Let $X$ and $E_\bu$ be fixed, and suppose $G,H$ are Lie groups, and $P\ra X$, $Q\ra X$ are principal $G$- and $H$-bundles respectively. Then $P\t_XQ$ is a principal $G\t H$ bundle over $X$. There is a natural 1-1 correspondence between pairs $(\nabla_P,\nabla_Q)$ of connections $\nabla_P,\nabla_Q$ on $P,Q$, and connections $\nabla_{P\t_XQ}$ on $P\t_XQ$. This induces an isomorphism of topological stacks $\La_{P,Q}:\B_P\t\B_Q\ra\B_{P\t_XQ}$.

For $(\nabla_P,\nabla_Q)$ and $\nabla_{P\t_XQ}$ as above, there are also natural isomorphisms
\begin{align*}
\Ker(D^{\nabla_{\Ad(P)}})\op \Ker(D^{\nabla_{\Ad(Q)}})&\cong\Ker(D^{\nabla_{\Ad(P\t_XQ)}}),\\
\Coker(D^{\nabla_{\Ad(P)}})\op \Coker(D^{\nabla_{\Ad(Q)}})&\cong\Coker(D^{\nabla_{\Ad(P\t_XQ)}}).\end{align*}
With the orientation conventions described in \cite[\S 3.1.1 \& Prop.~3.5(ii)]{Upme}, these induce a natural isomorphism
\begin{equation*}
\det(D^{\nabla_{\Ad(P)}})\ot \det(D^{\nabla_{\Ad(Q)}})\cong\det(D^{\nabla_{\Ad(P\t_XQ)}}),
\end{equation*}
which is the fibre at $(\nabla_P,\nabla_Q)$ of an isomorphism of line bundles on $\B_P\t\B_Q$
\begin{equation*}
L_P^{E_\bu}\bt L_Q^{E_\bu}\cong
\La_{P,Q}^*(L_{P\t_XQ}^{E_\bu}).
\end{equation*}

This induces an isomorphism of orientation bundles
\begin{equation*}
\la_{P,Q}:O_P^{E_\bu}\bt_{\Z_2} O_Q^{E_\bu}\,{\buildrel\cong\over\longra}\,\La_{P,Q}^*(O_{P\t_XQ}^{E_\bu}).
\end{equation*}
Therefore $\B_{P\t_XQ}$ is orientable if and only if $\B_P,\B_Q$ are both orientable, and then there is a natural correspondence between pairs $(\om_P,\om_Q)$ of orientations for $\B_P,\B_Q$, and orientations $\om_{P\t_XQ}$ for $\B_{P\t_XQ}$.

By exchanging $G,H$ and $P,Q$, we get an isomorphism on $\B_Q\t\B_P$:
\begin{equation*}
\la_{Q,P}:O_Q^{E_\bu}\bt_{\Z_2} O_P^{E_\bu}\,{\buildrel\cong\over\longra}\,
\La_{Q,P}^*(O_{Q\t_XP}^{E_\bu}).
\end{equation*}
Under the natural isomorphisms $\B_P\t\B_Q\cong\B_Q\t\B_P$, $\B_{P\t_XQ}\cong\B_{Q\t_XP}$, using the orientation convention we can show that 
\e
\la_{P,Q}=(-1)^{\ind^{E_\bu}_P\cdot\ind^{E_\bu}_Q}\cdot\la_{Q,P}.
\label{or2eq3}
\e
This gives a {\it commutativity property\/} of the isomorphisms~$\la_{P,Q}$.

If $K$ is another Lie group and $R\ra X$ a principal $K$-bundle, then we have
\e
\begin{split}
&\La_{P\t_XQ,R}\ci(\La_{P,Q}\t\id_{\B_R})=\La_{P,Q\t_XR}\ci(\id_{\B_P}\t\La_{Q,R})
:\\
&\qquad\qquad\B_P\t\B_Q\t\B_R\longra\B_{P\t_XQ\t_XR}.	
\end{split}
\label{or2eq4}
\e
Using this, we can show the following {\it associativity property\/} of the isomorphisms $\la_{P,Q}$ on $\B_P\t\B_Q\t\B_R$, where the sign is trivial:
\ea
&(\La_{P,Q}\t\id_{\B_R})^*(\la_{P\t_XQ,R})\ci(\pi_{\B_P\t\B_Q}^*(\la_{P,Q})\ot\id_{\pi_{\B_R}^*(O_R^{E_\bu})})
\label{or2eq5}\\
&=(\id_{\B_P}\t\La_{Q,R})^*(\la_{P,Q\t_XR})\ci(\id_{\pi_{\B_P}^*(O_P^{E_\bu})}\ot\pi_{\B_Q\t\B_R}^*(\la_{Q,R})):
\nonumber\\
&O_P^{E_\bu}\bt_{\Z_2}O_Q^{E_\bu}\bt_{\Z_2}O_R^{E_\bu}\,{\buildrel\cong\over\longra}
\,\bigl(\La_{P\t_XQ,R}\ci(\La_{P,Q}\t\id_{\B_R})\bigr)^*(O_{P\t_XQ\t_XR}^{E_\bu}).
\nonumber
\ea
Equations \eq{or2eq3} and \eq{or2eq5} are examples of the kind of explicit formula relating orientations referred to in Problem \ref{or1prob}(c). See also~\cite[Prop.~3.5(ii)]{Upme}.

The analogue of the above also works for n-orientation bundles.

\subsubsection{Relating moduli spaces for discrete quotients $G\twoheadrightarrow H$}
\label{or228}

Suppose $G$ is a Lie group, $K\subset G$ a discrete (closed and dimension zero) normal subgroup, and set $H=G/K$ for the quotient Lie group. Let $X,E_\bu$ be fixed. If $P\ra X$ is a principal $G$-bundle, then $Q:=P/K$ is a principal $H$-bundle over $X$. Each $G$-connection $\nabla_P$ on $P$ induces a natural $H$-connection $\nabla_Q$ on $Q$, and the map $\nabla_P\mapsto\nabla_Q$ induces a natural morphism $\De_P^Q:\B_P\ra\B_Q$ of topological stacks, which is an isomorphism. If $\nabla_P,\nabla_Q$ are as above then the natural isomorphism $\g\cong\h$ induces an isomorphism $\Ad(P)\cong\Ad(Q)$ of vector bundles on $X$, which identifies the connections $\nabla_{\Ad(P)},\nabla_{\Ad(Q)}$. Hence the twisted elliptic operators $D^{\nabla_{\Ad(P)}},D^{\nabla_{\Ad(Q)}}$ are naturally isomorphic, and so are their determinants \eq{or1eq3}. This easily gives canonical isomorphisms
\begin{equation*}
L_P^{E_\bu}\cong(\De_P^Q)^*(L_Q^{E_\bu}) \quad\text{and}\quad \de_P^Q:O_P^{E_\bu}\,{\buildrel\cong\over\longra}\,(\De_P^Q)^*(O_Q^{E_\bu}),
\end{equation*}
which induce a 1-1 correspondence between orientations on $\B_P,\B_Q$. For example, we can apply this when $G=\SU(2)$ and~$H=\SO(3)=\SU(2)/\{\pm 1\}$.

Note however that not every principal $H$-bundle $Q\ra X$ need come from a principal $G$-bundle $P\ra X$ by $Q\cong P/K$. For example, a principal $\SO(3)$-bundle $Q\ra X$ lifts to a principal $\SU(2)$-bundle $P\ra X$ if and only if the second Stiefel-Whitney class $w_2(Q)$ is zero.

\begin{ex} Take $G=\SU(m)\t\U(1)$, and define $K\subset G$ by
\begin{equation*}
K=\bigl\{(e^{2\pi ik/m}\Id_m,e^{-2\pi ik/m}):k=1,\ldots,m\bigr\}\cong\Z_m.
\end{equation*}
Then $K$ lies in the centre $Z(G)$, so is normal in $G$, and $H=G/K\cong\U(m)$. To see this, note that the morphism $G=\SU(m)\t\U(1)\ra\U(m)=H$ mapping $(A,e^{i\th})\mapsto e^{i\th}A$ is surjective with kernel~$K$.

For fixed $X,E_\bu$, let $P\ra X$ be a principal $\SU(m)$-bundle, and write $P'=X\t\U(1)\ra X$ for the trivial $\U(1)$-bundle over $X$. Set $P''=P\t_XP'$ for the associated principal $\SU(m)\t\U(1)$-bundle over $X$, and define $Q=P''/K$ for the quotient principal $\U(m)$-bundle. We now have isomorphisms of moduli spaces
\begin{equation*}
\smash{\xymatrix@C=60pt{ \B_P\t\B_{P'} \ar[r]^(0.45){\La_{P,P'}} & \B_{P\t_XP'}=\B_{P''} \ar[r]^(0.55){\De_{P''}^Q} & \B_Q, }}
\end{equation*}
and isomorphisms of orientation bundles
\begin{equation*}
\smash{\xymatrix@C=60pt{ O_P^{E_\bu}\bt_{\Z_2} O_{P'}^{E_\bu} \ar[r]^{\la_{P,P'}} &
\La_{P,P'}^*(O_{P''}^{E_\bu}) \ar[r]^{\La_{P,P'}^*(\de_{P''}^Q)} & (\De_{P''}^Q\ci\La_{P,P'})^*(O_Q^{E_\bu}),
}}
\end{equation*}
where $\La_{P,P'},\la_{P,P'}$ are as in \S\ref{or227}, and $\De_{P''}^Q,\de_{P''}^Q$ are as above. 

As $P'$ is the trivial $\U(1)$-bundle, it carries the trivial connection $\nabla^0_{P'}$. Fixing an orientation for $\det D$, as in \S\ref{or222} we have the standard orientation for $\B_{P'}$ at $[\nabla_{P'}^0]$, giving an isomorphism $\si_{P'}:\Z_2\ra O_{P'}^{E_\bu}\vert_{[\nabla^0_{P'}]}$. Thus, we have a morphism 
\begin{equation*}
\Ka_P^Q:\B_P\longra\B_Q,\qquad \Ka_P^Q:[\nabla_P]\longmapsto \De_{P''}^Q\ci \La_{P,P'}\bigl([\nabla_P],[\nabla^0_{P'}]\bigr),
\end{equation*}
and an isomorphism of orientation bundles
\begin{equation*}
\ka_P^Q:=\La_{P,P'}^*(\de_{P''}^Q)\ci \la_{P,P'}\ci(\id\bt\si_{P'}):O_P^{E_\bu}\longra (\Ka_P^Q)^*(O_Q^{E_\bu}).	
\end{equation*}
Hence orientations for the $\U(m)$-bundle moduli space $\B_Q$ induce orientations for the $\SU(m)$-bundle moduli space $\B_P$. Our conclusion is:
\begin{quotation}
\noindent{\it For fixed\/ $X,E_\bu,$ if we have orientability, or canonical orientations, on $\B_Q$ for all principal\/ $\U(m)$-bundles $Q\ra X,$ then we have orientability, or canonical orientations, on $\B_P$ for all\/ $\SU(m)$-bundles $P\ra X$. The analogue holds for n-orientations.}	
\end{quotation}
Example \ref{or2ex5} will give a kind of converse to this.
\label{or2ex3}	
\end{ex}

The method of the next proposition was used by Donaldson and Kronheimer \cite[\S 5.4.3]{DoKr} for simply-connected 4-manifolds~$X$.

\begin{prop} Let\/ $X,E_\bu$ be fixed as in Definition\/ {\rm\ref{or1def2},} and suppose that for all principal\/ $\U(2)$-bundles $Q\ra X,$ the moduli space $\B_Q$ is orientable. Then for all principal\/ $\SO(3)$-bundles $P\ra X$ such that\/ $w_2(P)\in H^2(X,\Z_2)$ lies in the image of\/ $H^2(X,\Z)\ra H^2(X,\Z_2),$ the moduli space $\B_P$ is orientable. This holds for all\/ $\SO(3)$-bundles $P\ra X$ if\/ $H^3(X,\Z)$ has no $2$-torsion.

The analogue holds with\/ $\U(2),\SO(3)$ replaced by\/ $\Spinc(n),\SO(n),$ $n\ge 2$.
\label{or2prop2}
\end{prop}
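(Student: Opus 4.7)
The plan is to realize each $\SO(3)$-bundle $P$ as coming from a $\U(2)$-bundle by a discrete central quotient, and then chain the constructions of \S\ref{or227} and \S\ref{or228} with the triviality of the $\U(1)$ orientation bundle from \S\ref{or223}.

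The algebraic input is the central $\Z_2$-extension
\begin{equation*}
1 \longra \{\pm I\} \longra \U(2) \xrightarrow{(\pi,\det)} \SO(3)\t\U(1) \longra 1,
\end{equation*}
where $\pi:\U(2)\twoheadrightarrow\U(2)/Z(\U(2))\cong\SO(3)$ and the kernel of the combined map $(\pi,\det)$ is $\{A\in Z(\U(2)):\det A=1\}=\{\pm I\}$ (since both sides have dimension $4$ and are connected, the map is surjective). Given $P\ra X$ principal $\SO(3)$ with $w_2(P)$ admitting an integral lift $c\in H^2(X,\Z)$, I would choose a principal $\U(1)$-bundle $L\ra X$ with $c_1(L)=c$, and lift the $\SO(3)\t\U(1)$-bundle $P\t_X L$ along the above extension. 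The obstruction lives in $H^2(X,\Z_2)$ and is the standard class $w_2(P)+c_1(L)\bmod 2 = 2c\bmod 2 = 0$, so a principal $\U(2)$-bundle $Q\ra X$ exists with $Q/\{\pm I\}\cong P\t_X L$.

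Now I would chain three isomorphisms of orientation bundles. By \S\ref{or228} applied to $G=\U(2),$ $K=\{\pm I\},$ $H=\SO(3)\t\U(1)$, there is an isomorphism $\Delta:\B_Q\xrightarrow{\cong}\B_{P\t_X L}$ of topological stacks together with $\delta:O_Q^{E_\bu}\xrightarrow{\cong}\Delta^*(O_{P\t_X L}^{E_\bu})$. By \S\ref{or227}, $\B_{P\t_X L}\cong\B_P\t\B_L$ and $O_{P\t_X L}^{E_\bu}\cong O_P^{E_\bu}\bt_{\Z_2} O_L^{E_\bu}$. Finally, \S\ref{or223} provides a canonical trivialization of $O_L^{E_\bu}$ since $\U(1)$ is abelian. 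Composing these yields $O_Q^{E_\bu}\cong\mathrm{pr}_{\B_P}^*(O_P^{E_\bu})$ on $\B_Q\cong\B_P\t\B_L$; restricting to any slice $\B_P\t\{[\nabla_L]\}$ shows that $O_Q^{E_\bu}$ is trivializable iff $O_P^{E_\bu}$ is, so $\B_Q$ orientable forces $\B_P$ orientable.

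For the no-$2$-torsion statement: the class $w_2(P)\in H^2(X,\Z_2)$ lifts to $H^2(X,\Z)$ iff its integer Bockstein $\beta(w_2(P))\in H^3(X,\Z)$ vanishes, and since $2\beta=0$ the image of $\beta$ consists of $2$-torsion, which vanishes under the hypothesis. For the $\Spinc(n),\SO(n)$ analogue the argument is verbatim using the central $\Z_2$-extension
\begin{equation*}
1\longra\{\pm 1\}\longra\Spinc(n)\longra\SO(n)\t\U(1)\longra 1,
\end{equation*}
where the second map is the standard projection to $\SO(n)$ paired with the squaring character $[(\sigma,z)]\mapsto z^2$ to $\U(1)$; the case $n=3$ recovers the above since $\Spinc(3)\cong\U(2)$. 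The main technical point to verify is that the obstruction class for these $\Z_2$-extensions is indeed $w_2(P)+c_1(L)\bmod 2$, a standard diagram chase in the long exact sequence of sheaf cohomology; the remaining assembly is immediate from \S\ref{or223}, \S\ref{or227}, \S\ref{or228}.
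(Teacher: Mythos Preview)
Your proof is correct and follows essentially the same approach as the paper. Both use the central $\Z_2$-extension $\U(2)\twoheadrightarrow\SO(3)\t\U(1)$ together with \S\ref{or227}--\S\ref{or228} and the abelian triviality of \S\ref{or223}; the only cosmetic difference is that the paper starts from a $\U(2)$-bundle $Q$ and passes down to $P,S$, whereas you start from $P$, choose $L$, and lift to $Q$, and the paper phrases the $2$-torsion argument via the long exact sequence rather than the Bockstein.
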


\begin{proof} We apply the above construction with $G=\U(2)$, $K=\{\pm 1\}\subset\U(2)$, and $H=\U(2)/\{\pm 1\}\cong\SO(3)\t\U(1)$. Let $Q\ra X$ be a principal $\U(2)$-bundle. Then $R=Q/\{\pm 1\}$ is a principal $\SO(3)\t\U(1)$-bundle $R\ra X$. Hence there are principal $\SO(3)$- and $\U(1)$-bundles $P,S\ra X$ with $R\cong P\t_XS$. We now have isomorphisms of moduli spaces
\begin{equation*}
\smash{\xymatrix@C=60pt{ \B_P\t\B_S \ar[r]^(0.45){\La_{P,S}} & \B_{P\t_XS}\cong \B_R \ar[r]^(0.55){\De_R^Q} & \B_Q, }}
\end{equation*}
and isomorphisms of orientation bundles
\begin{equation*}
\smash{\xymatrix@C=55pt{ 
O_P^{E_\bu}\bt_{\Z_2} O_S^{E_\bu} \ar[r]^{\la_{P,S}} &
\La_{P,S}^*(O_R^{E_\bu}) \ar[r]^(0.45){\La_{P,S}^*(\de_R^Q)} & (\De_R^Q\ci\La_{P,S})^*(O_Q^{E_\bu}).
}}
\end{equation*}
By assumption $O_Q^{E_\bu}$ is orientable. Restricting to a point of $\B_S$ in the above equations, we see that $\B_P$ is orientable.

Since $\U(2)\cong\Spinc(3)$, it is known from the theory of $\Spinc$-structures that an $\SO(3)$-bundle $P\ra X$ extends to a $\U(2)$-bundle $Q\ra X$ as above if and only if the second Stiefel--Whitney class $w_2(P)\in H^2(X,\Z_2)$ lies in the image of $H^2(X,\Z)\ra H^2(Z,\Z_2)$, since $w_2(P)$ must be the image of $c_1(Q)$. The exact sequence $0\ra\Z\,{\buildrel 2\cdot\over\longra}\,\Z\ra\Z_2\ra 0$ gives a long exact sequence in cohomology
\begin{equation*}
\xymatrix@C=20pt{ \cdots \ar[r] & H^2(X,\Z) \ar[r] & H^2(X,\Z_2) \ar[r] & H^3(X,\Z) \ar[r]^{2\cdot} & H^3(X,\Z) \ar[r] & \cdots. }  
\end{equation*}
This implies that $H^2(X,\Z)\ra H^2(X,\Z_2)$ is surjective if and only if $H^3(X,\Z)$ has no 2-torsion. The same arguments work with $\U(2),\SO(3)$ replaced by $\Spinc(n)$ and $\SO(n)$. The proposition follows.
\end{proof}

The analogues of all the above also work for n-orientation bundles.

\subsubsection{Relating moduli spaces for Lie subgroups $G\subset H$}
\label{or229}

Let $X,E_\bu$ be fixed, and let $H$ be a Lie group and $G\subset H$ a Lie subgroup, with Lie algebras $\g\subset\h$. If $P\ra X$ is a principal $G$-bundle, then $Q:=(P\t H)/G$ is a principal $H$-bundle over $X$. Each $G$-connection $\nabla_P$ on $P$ induces a natural $H$-connection $\nabla_Q$ on $Q$, and the map $\nabla_P\mapsto\nabla_Q$ induces a natural morphism $\Xi_P^Q:\B_P\ra\B_Q$ of topological stacks. Thus, we can try to compare the line bundles $L_P^{E_\bu},(\Xi_P^Q)^*(L_Q^{E_\bu})$ on $\B_P$, and the principal $\Z_2$-bundles~$O_P^{E_\bu},(\Xi_P^Q)^*(O_Q^{E_\bu})$.

Write $\m=\h/\g$, and $\rho:G\ra\Aut(\m)$ for the representation induced by the adjoint representation of $H\supset G$. Then we have an exact sequence 
\e
\xymatrix@C=30pt{ 0 \ar[r] & \Ad(P) \ar[r] & \Ad(Q) \ar[r] & \rho(P)=(P\t\m)/G \ar[r] & 0 }	
\label{or2eq6}
\e
of vector bundles on $X$, induced by $0\ra\g\ra\h\ra\m\ra 0$. If $\nabla_P,\nabla_Q$ are as above, we have connections $\nabla_{\Ad(P)},\nabla_{\Ad(Q)},\nabla_{\rho(P)}$ on $\Ad(P),\Ad(Q),\rho(P)$ compatible with \eq{or2eq6}. Twisting $E_\bu$ by $\Ad(P),\ab\Ad(Q),\ab\rho(P)$ and their connections and taking determinants, as in \cite[Prop.~3.5(ii)]{Upme} we define an isomorphism
\begin{equation*}
\det(D^{\nabla_{\Ad(P)}})\ot \det(D^{\nabla_{\rho(P)}})\cong \det(D^{\nabla_{\Ad(Q)}}),
\end{equation*}
which is the fibre at $\nabla_P$ of an isomorphism of line bundles on $\B_P$
\e
L_P^{E_\bu}\ot L_{P,\rho}^{E_\bu}\cong (\Xi_P^Q)^*(L_Q^{E_\bu}),
\label{or2eq7}
\e
where $L_{P,\rho}^{E_\bu}\ra\B_P$ is the determinant line bundle associated to the family of elliptic operators $\nabla_P\mapsto D^{\nabla_{\rho(P)}}$ on $\B_P$. We will write $\ind_{P,\rho}^{E_\bu}:=\ind(D^{\nabla_{\rho(P)}})$ for the index of these operators, which is independent of~$[\nabla_P]\in\B_P$.

Now suppose that we can give $\m$ the structure of a {\it complex\/} vector space, such that $\rho:G\ra\Aut(\m)$ is complex linear. This happens if $H/G$ has an (almost) complex structure homogeneous under $H$. Then as in \S\ref{or224} for complex $G$, we can define a natural orientation on $L_{P,\rho}^{E_\bu}$, so taking orientations in \eq{or2eq7} gives a natural isomorphism of principal $\Z_2$-bundles on~$\B_P$:
\e
\xi_P^Q:O_P^{E_\bu}\,{\buildrel\over\longra}\, (\Xi_P^Q)^*(O_Q^{E_\bu}).
\label{or2eq8}
\e
An easy special case is if $\m=0$, e.g. for $\SO(m)\subset{\rm O}(m)$, when $L_{P,\rho}^{E_\bu}$ is trivial.

This gives a method for proving orientability in Problem \ref{or1prob}(a). Suppose we can show that $H^1(\B_Q,\Z_2)=0$, using homotopy-theoretic properties of $X,H$. Then $\B_Q$ is orientable by Lemma \ref{or2lem1}, so \eq{or2eq8} shows that $\B_P$ is orientable, even if $H^1(\B_P,\Z_2)\ne 0$. The method is used by Donaldson \cite[Lem.~10]{Dona1}, \cite[\S 5.4.2]{DoKr}, and by Mu\~noz and Shahbazi \cite{MuSh} using the inclusion of Lie groups $\SU(9)/\Z_3\subset E_8$. Here are some examples of suitable~$G\subset H$:

\begin{ex} We have an inclusion $G=\U(m_1)\t\U(m_2)\subset\U(m_1+m_2)=H$ for $m_1,m_2\ge 1$, with $\u(m_1+m_2)/(\u(m_1)\op\u(m_2))=\m\cong\C^{m_1}\ot_\C\ov{\C^{m_2}}$, where $G=\U(m_1)\t\U(m_2)$ acts on $\C^{m_1}\ot_\C\ov{\C^{m_2}}$ via the usual representations of $\U(m_1),\U(m_2)$ on $\C^{m_1},\C^{m_2}$, with $\ov{\C^{m_2}}$ the complex conjugate of $\C^{m_2}$, so the representation $\rho$ is complex linear.

Suppose $X,E_\bu$ are fixed, and $P_1\ra X$, $P_2\ra X$ are principal $\U(m_1)$- and $\U(m_2)$-bundles. Define a principal $\U(m_1+m_2)$-bundle $P_1\op P_2\ra X$ by 
\begin{equation*}
P_1\op P_2=(P_1\t_XP_2\t\U(m_1+m_2))/(\U(m_1)\t\U(m_2)).
\end{equation*}
Then combining the material of \S\ref{or227} for the product of $\U(m_1),\U(m_2)$ with the above, we have a morphism
\e
\Phi_{P_1,P_2}:=\Xi_{P_1\t_XP_2}^{P_1\op P_2}\ci\La_{P_1,P_2}:\B_{P_1}\t\B_{P_2}\longra\B_{P_1\op P_2},
\label{or2eq9}
\e
and a natural isomorphism of principal $\Z_2$-bundles on~$\B_{P_1}\t\B_{P_2}$:
\e
\begin{split}
&\phi_{P_1,P_2}=\La_{P_1,P_2}^*(\xi_{P_1\t_XP_2}^{P_1\op P_2})\ci\la_{P_1,P_2}:\\
&\qquad O_{P_1}^{E_\bu}\bt_{\Z_2} O_{P_2}^{E_\bu}\,{\buildrel\cong\over\longra}\,
\Phi_{P_1,P_2}^*(O_{P_1\op P_2}^{E_\bu}).
\end{split}
\label{or2eq10}
\e

As for \eq{or2eq3}--\eq{or2eq5}, we can consider commutativity and associativity properties of the isomorphisms $\phi_{P_1,P_2}$. For commutativity, under the natural isomorphisms $\B_{P_1}\t\B_{P_2}\cong \B_{P_2}\t\B_{P_1}$, $\B_{P_1\op P_2}\cong\B_{P_2\op P_1}$ we have
\e
\begin{split}
\Phi_{P_2,P_1}&\cong\Phi_{P_1,P_2}, \\
\phi_{P_2,P_1}&\cong(-1)^{\ind^{E_\bu}_{P_1}\cdot\ind^{E_\bu}_{P_2}}\cdot (-1)^{\frac{1}{2}\ind^{E_\bu}_{P_1\t_XP_2,\rho}}
\cdot\phi_{P_1,P_2}.
\end{split}
\label{or2eq11}
\e
Here the first sign $(-1)^{\ind^{E_\bu}_{P_1}\cdot\ind^{E_\bu}_{P_2}}$ in \eq{or2eq11} comes from \eq{or2eq3}, and exchanges the $\U(m_1)\t\U(m_2)$-bundle $P_1\t_XP_2$ with the $\U(m_2)\t\U(m_1)$-bundle~$P_2\t_XP_1$.

The second sign $(-1)^{\frac{1}{2}\ind^{E_\bu}_{P_1\t_XP_2,\rho}}$ in \eq{or2eq11} comes in as $\phi_{P_1,P_2},\phi_{P_2,P_1}$ in \eq{or2eq10} depend on choices of complex structure on 
\begin{equation*}
\m_{1,2}\!=\!\u(m_1+m_2)/(\u(m_1)\!\op\u(m_2))\;\>\text{and}\;\>\m_{2,1}\!=\!\u(m_2+m_1)/(\u(m_2)\!\op\!\u(m_1)).
\end{equation*} 
Under the natural isomorphism $\m_{1,2}\cong\m_{2,1}$, these complex structures are {\it complex conjugate}, as $\m_{1,2}\cong\C^{m_1}\ot_\C\ov{\C^{m_2}}$, $\m_{2,1}\cong\C^{m_2}\ot_\C\ov{\C^{m_1}}\cong\ov{\C^{m_1}}\ot_\C\C^{m_2}$. Because of this, under the natural isomorphism $L_{P_1\t_XP_2,\rho_{12}}^{E_\bu}\cong L_{P_2\t_XP_1,\rho_{21}}^{E_\bu}$, the orientations on $L_{P_1\t_XP_2,\rho_{12}}^{E_\bu},L_{P_2\t_XP_1,\rho_{21}}^{E_\bu}$ used to define $\phi_{P_1,P_2},\ab\phi_{P_2,P_1}$ differ by a factor of $(-1)^{\ind_\C(D^{\nabla_{\rho(P_1\t_XP_2)}})}$, regarding $D^{\nabla_{\rho(P_1\t_XP_2)}}$ as a complex elliptic operator. As $\ind_\C(D^{\nabla_{\rho(P_1\t_XP_2)}})=\ha\ind_\R(D^{\nabla_{\rho(P_1\t_XP_2)}})=\frac{1}{2}\ind^{E_\bu}_{P_1\t_XP_2,\rho}$, equation \eq{or2eq11} follows.

For associativity, if $P_3\ra X$ is a principal $\U(m_3)$-bundle then we have
\begin{align*}
&\Phi_{P_1\op P_2,P_3}\ci(\Phi_{P_1,P_2}\t\id_{\B_{P_3}})=\Phi_{P_1,P_2\op P_3}\ci(\id_{\B_{P_1}}\t\Phi_{P_2,P_3}):\\
&\qquad\qquad\B_{P_1}\t\B_{P_2}\t\B_{P_3}\longra\B_{P_1\op P_2\op P_3},	
\end{align*}
as for \eq{or2eq4}, and then as for \eq{or2eq5}, we have
\ea
&(\Phi_{P_1,P_2}\t\id_{\B_{P_3}})^*(\phi_{P_1\op P_2,P_3})\ci(\phi_{P_1,P_2}\bt\id_{O_{P_3}^{E_\bu}})
\label{or2eq12}\\
&=(\id_{\B_{P_1}}\t\Phi_{P_2,P_3})^*(\phi_{P_1,P_2\op P_3})\ci(\id_{O_{P_1}^{E_\bu}}\bt\phi_{P_2,P_3}):
\nonumber\\
&O_{P_1}^{E_\bu}\bt_{\Z_2}O_{P_2}^{E_\bu}\bt_{\Z_2}O_{P_3}^{E_\bu}\,{\buildrel\cong\over\longra}\,\bigl(\Phi_{P_1\op P_2,P_3}\ci(\Phi_{P_1,P_2}\t\id_{\B_{P_3}})\bigr)^*(O_{P_1\op P_2\op P_3}^{E_\bu}).
\nonumber
\ea
The sign is trivial as there is no sign in \eq{or2eq5}, and the natural isomorphism $\m_{12,3}\op\m_{1,2}\cong\m_{1,23}\op\m_{2,3}$ is complex linear.

The analogue of the above also holds for n-orientation bundles, giving isomorphisms
$\check\phi_{P_1,P_2}:\check O_{P_1}^{E_\bu}\bt_{\Z_2}\check O_{P_2}^{E_\bu}\ra
\Phi_{P_1,P_2}^*(\check O_{P_1\op P_2}^{E_\bu})$ satisfying \eq{or2eq11} and~\eq{or2eq12}.
\label{or2ex4}	
\end{ex}

\begin{rem} The analogue of Example \ref{or2ex4} does not work for the families of groups ${\rm O}(m),\SO(m),\Spin(m)$ or $\Sp(m)$, since for example under the inclusion $\SO(m_1)\t\SO(m_2)\hookra\SO(m_1+m_2)$, there is no $\SO(m_1)\t\SO(m_2)$-invariant complex structure on $\m=\so(m_1+m_2)/(\so(m_1)\op\so(m_2))$ unless $m_1=2$ or $m_2=2$. So the theory of \S\ref{or25} below works only for the unitary groups.
\label{or2rem4}	
\end{rem}

\begin{ex} Define an inclusion $\U(m)\hookra\SU(m+1)$ by mapping
\begin{equation*}
A\longmapsto \begin{pmatrix} A & \begin{matrix} 0 \\ \vdots \\ 0 \end{matrix} \\ \begin{matrix} 0 & \cdots & 0 \end{matrix} & (\det A)^{-1} \end{pmatrix},\qquad A\in\U(m).	
\end{equation*}
There is an isomorphism $\m=\su(m+1)/\u(m)\cong\C^m$, such that $A\in\U(m)$ acts on $\m\cong\C^m$ by $A:\bs x\mapsto \det A\cdot A\bs x$, which is complex linear on~$\m$. 

For fixed $X,E_\bu$, let $P\ra X$ be a principal $\U(m)$-bundle, and $Q=(P\t\SU(m+1))/\U(m)$ the associated principal $\SU(m+1)$-bundle. Then as above we have a morphism of moduli spaces $\Xi_P^Q:\B_P\ra\B_Q$ and an isomorphism of orientation bundles $\xi_P^Q:O_P^{E_\bu}\ra(\Xi_P^Q)^*(O_Q^{E_\bu})$. Hence orientations for the $\SU(m+1)$-bundle moduli space $\B_Q$ induce orientations for the $\U(m)$-bundle moduli space $\B_P$. In a converse to Example \ref{or2ex3}, our conclusion is:
\begin{quotation}
\noindent{\it For fixed\/ $X,E_\bu,$ if we have orientability, or canonical orientations, on $\B_Q$ for all principal\/ $\SU(m+1)$-bundles $Q\ra X,$ then we have orientability, or canonical orientations, on $\B_P$ for all principal\/ $\U(m)$-bundles $P\ra X$. The analogue holds for n-orientations.}	
\end{quotation}
\label{or2ex5}	
\end{ex}

\begin{ex} Define an inclusion $\U(m)\hookra\Sp(m)$ by mapping complex matrices to quaternionic matrices using the inclusion $\C=\an{1,i}_\R\hookra\H=\an{1,i,j,k}_\R$. There is an isomorphism of $\m=\sp(m)/\u(m)$ with the complex vector space of $m\t m$ complex symmetric matrices $B$, such that $A\in\U(m)$ acts on $\m$ by $A:B\mapsto ABA^t$, which is complex linear on $\m$. 

For fixed $X,E_\bu$, let $P\ra X$ be a $\U(m)$-bundle, and $Q=(P\t\Sp(m))/\U(m)$ the associated $\Sp(m)$-bundle. Then as above we have a morphism of moduli spaces $\Xi_P^Q:\B_P\ra\B_Q$ and an isomorphism of orientation bundles $\xi_P^Q:O_P^{E_\bu}\ra(\Xi_P^Q)^*(O_Q^{E_\bu})$. Hence orientations for the $\Sp(m)$-bundle moduli space $\B_Q$ induce orientations for the $\U(m)$-bundle moduli space $\B_P$. Our conclusion is:
\begin{quotation}
\noindent{\it For fixed\/ $X,E_\bu,$ if we have orientability, or canonical orientations, on $\B_Q$ for all principal\/ $\Sp(m)$-bundles $Q\ra X,$ then we have orientability, or canonical orientations, on $\B_P$ for all principal\/ $\U(m)$-bundles $P\ra X$. The analogue holds for n-orientations.}	
\end{quotation}
The converse is false. In \S\ref{or429}--\S\ref{or4210} we discuss problems in which orientability holds for $\U(m)$-bundles, but fails for $\Sp(m)$-bundles.
\label{or2ex6}
\end{ex}

\begin{ex} We have an inclusion $\SO(m)\t\SO(2)\subset\SO(m+2)$ for $m\ge 1$. There is a natural identification $\so(m+2)/(\so(m)\op\so(2))=\m\cong\R^m\ot_\R\R^2$, where $G=\SO(m)\t\SO(2)$ acts on $\R^m\ot_\R\R^2$ by the tensor product of the obvious representations of $\SO(m),\SO(2)$ on $\R^m,\R^2$. Identifying $\R^2\cong\C$ and $\SO(2)\cong\U(1)$ gives $\m\cong\R^m\ot_\R\C=\C^m$, where $\rho$ is complex linear.
\label{or2ex7}	
\end{ex}

\begin{ex} We have an inclusion $G=\Sp(m)\t\U(1)\subset\Sp(m+1)=H$ for $m\ge 1$, by combining $\U(1)\subset\Sp(1)$ and $\Sp(m)\t\Sp(1)\subset\Sp(m+1)$. There is a natural identification $\sp(m+1)/(\sp(m)\op\u(1))=\m\cong\H^m\op\C$, where $G=\Sp(m)\t\U(1)$ acts on $\H^m\op\C$ by
\begin{equation*}
\rho(A,e^{i\th}):(\bs x,y)\longmapsto(A\bs xe^{-i\th},ye^{-2i\th})	
\end{equation*}
for $A\in\Sp(m)$, $e^{i\th}\in\U(1)$, $\bs x\in\H^m$ and $y\in\C$, regarding $A,\bs x,e^{i\th}$ as $m\t m$, $m\t 1$ and $1\t 1$ matrices over $\H$ to define $A\bs xe^{i\th}$. Identifying $\H^m\cong\C^{2m}$ using right multiplication by $i\in\H$, we see that $\rho$ is complex linear on~$\m\cong\C^{2m+1}$. 
\label{or2ex8}	
\end{ex}

\subsection{Background on K-theory and homotopy theory}
\label{or23}

We explain some Algebraic Topology background material needed in~\S\ref{or24}--\S\ref{or25}.

\subsubsection{Background on K-theory}
\label{or231}

We briefly summarize some notation and results from topological K-theory. Some references are Atiyah \cite{Atiy}, Karoubi \cite{Karo} and Switzer~\cite[\S 11]{Swit}.

Let $X$ be a compact topological space. Write $K^0(X)$ for the abelian group generated by isomorphism classes $\lb F \rb$ of complex vector bundles $F\ra X$ (which may have different ranks on different components of $X$) with the relation that $\lb F\op G\rb=\lb F\rb +\lb G\rb $ in $K^0(X)$ for all complex vector bundles $F,G\ra X$. If $f:X\ra Y$ is continuous, define a group morphism $K^0(f):K^0(Y)\ra K^0(X)$ with $K^0(f):\lb F\rb \mapsto\lb f^*(F)\rb $ for all $F\ra Y$. This defines a functor $K^0:\Top^{\bf cpt}\ra\mathop{\bf AbGp}$.

If $P\ra X$ is a principal $\U(m)$-bundle, it has an associated complex vector bundle $F\ra X$ with fibre $\C^m$ given by $F=(P\t\C^m)/\U(m)$. We write $\lb P\rb=\lb F\rb$ in~$K^0(X)$.

If $(X,x)$ is a compact topological space with base-point $x\in X$, define $\ti K^0(X,x)=\Ker \bigl(K^0(x):K^0(X)\ra K^0(*)\bigr)$, regarding $x$ as a map $*\ra X$. We can make any topological space $X$ into a space with basepoint $(X\amalg\{\iy\},\iy)$ by adding a disjoint extra point $\iy$, and then~$K^0(X)\cong \ti K^0(X\amalg\{\iy\},\iy)$.

Define $K^{-n}(X)=\ti K^0\bigl(S^n(X\amalg\{\iy\},\iy)\bigr)$ for $n=1,2,\ldots,$ where $S^n(-)$ is the $n$-fold suspension of pointed topological spaces. Then {\it Bott periodicity\/} gives canonical isomorphisms $K^{-n}(X)\cong K^{-n-2}(X)$, so we can extend to $K^n(X)$ for $n\in\Z$ periodic of period 2. Reducing from $\Z$ to $\Z_2$, we have the {\it complex K-theory\/} $K^*(X)=K^0(X)\op K^1(X)$, graded over~$\Z_2$.

Write $\ul\C^k$ for the trivial vector bundle $X\t\C^k\ra X$. Write $\boo_X\in K^0(X)$ for the class $\lb\ul\C\rb$, so that $\lb\ul\C^k\rb=k\,\boo_X$. Tensor product induces a product $\cdot:K^0(X)\t K^0(X)\ra K^0(X)$ with $\lb F\rb \cdot\lb G\rb =\lb F\ot_\C G\rb $, which extends to a graded product on $K^*(X)$, and is commutative and associative with identity $\boo_X$, making $K^*(X)$ into a $\Z_2$-graded commutative ring.
All this is contravariantly functorial under continuous maps~$f:X\ra Y$.

The Chern character gives isomorphisms
\begin{align*}
\Ch^0:K^0(X)\ot_\Z\Q\longra H^{\rm even}(X,\Q),\quad
\Ch^1:K^1(X)\ot_\Z\Q\longra H^{\rm odd}(X,\Q).
\end{align*}
There is an {\it Atiyah--Hirzebruch spectral sequence\/} $H^{i+2\Z}(X,\Z)\Ra K^i(X)$, which can be used to compute $K^*(X)$.

Now suppose $X$ is a compact, connected manifold. Then there is a morphism $\rank:K^0(X)\ra\Z$ mapping $\lb F\rb \mapsto\rank F$. If $\al\in K^0(X)$ with $2\rank\al\ge\dim X$ (the {\it stable range\/}) then there exists a complex vector bundle $F\ra X$ with $\lb F\rb=\al$ in $K^0(X)$, which is unique up to (noncanonical) isomorphism. Choosing a metric $h$ on the fibres of $F$ gives a principal $\U(\rank\al)$-bundle $P\ra X$ with $\lb P\rb=\al$, also unique up to isomorphism.

Instead of working with complex vector bundles $F\ra X$, we can work with real vector bundles, giving {\it real K-theory\/} $KO^*(X)$, or with quaternionic vector bundles, yielding {\it quaternionic K-theory\/} $KSp^*(X)$. In these cases {\it Bott periodicity\/} gives isomorphisms $KO^{-n}(X)\cong KO^{-n-8}(X)$, $KSp^{-n}(X)\cong KSp^{-n-8}(X)$, so $KO^*(X),KSp^*(X)$ are both graded over $\Z_8$. There are canonical isomorphisms $KSp^n(X)\cong KO^{n+4}(X)$ for all $X,n$. Here $KO^*(X)$ is a $\Z_8$-graded commutative ring, but we do not have a natural graded product on $KSp^*(X)$, as there is no good notion of tensor product of quaternionic vector bundles.

\subsubsection{Homotopy and the category $\Topho$}
\label{or232}

Continuous maps $f_0,f_1:X\ra Y$ are called {\it homotopic\/} if there is a continuous $h:X\t[0,1]\ra Y$ with $h(x,0)=f_0(x)$ and $h(x,1)=f_1(x)$. Writing $f_t(x)=h(x,t)$, this means there is a continuous family
$(f_t:X\ra Y)_{t\in[0,1]}$ interpolating between $f_0$ and $f_1$. Homotopy is an equivalence relation on $f:X\ra Y$.

Write $\Topho$ for the category with objects topological spaces $X,Y$ and morphisms homotopy equivalence classes $[f]:X\ra Y$ of continuous maps $f:X\ra Y$. Then $f:X\ra Y$ is a {\it homotopy equivalence\/} in $\Top$ if $[f]:X\ra Y$ is an isomorphism in~$\Topho$.

We will deal with constructions which yield a space $X$ unique up to homotopy equivalence, or a map $f:X\ra Y$ unique up to homotopy. These are conveniently stated in the category $\Topho$: $X$ is unique up to isomorphism (or perhaps canonical isomorphism) in $\Topho$, and $[f]:X\ra Y$ is unique in~$\Topho$.

\subsubsection{Background on classifying spaces}
\label{or233}

We summarize some well known material which can be found in Milnor and Stasheff \cite{MiSt}, May \cite[\S\S 16.5, 23, 24]{May2}, and Husem\"oller et al.\ \cite[Part II]{HJJS}. Let $G$ be a topological group. A {\it classifying space\/} for $G$ is a topological space $BG$ and a principal $G$-bundle $\pi:EG\ra BG$ such that $EG$ is contractible. Classifying spaces exist for any $G$, and are unique up to homotopy equivalence, so $BG$ is unique up to (canonical) isomorphism in $\Topho$. There is a functorial construction for them involving simplicial complexes.  

Classifying spaces have the property that if $X$ is a paracompact topological space and $P\ra X$ is a principal $G$-bundle, then there exists a continuous map $f_P:X\ra BG$, unique up to homotopy, so that $[f_P]:X\ra BG$ is unique in $\Topho$, and an isomorphism $P\cong f_P^*(EG)$ of principal $G$-bundles on $X$. Thus, there is a 1-1 correspondence between the set of isomorphism classes of principal $G$-bundles on $X$, and $[X,BG]$. Here for topological spaces $X,Y$ we write $[X,Y]=\pi_0\bigl(\Map_{C^0}(X,Y)\bigr)$ for the set of homotopy classes of maps $X\ra Y$, the connected components of $\Map_{C^0}(X,Y)$.

To prove this property, note that $(P\t EG)/G\ra X$ is a bundle with fibre $EG$, which is contractible. Sections $X\ra (P\t EG)/G$ of this bundle project to maps $X\ra EG/G=BG$ as required. A bundle with contractible fibre over a paracompact space $X$ has sections, and any two sections are homotopic.

Any $\al\in H^k(BG,\Z)$ defines a characteristic class $c_\al$ of principal $G$-bundles $P\ra X$ by $c_\al(P)=H^k(f_P)(\al)\in H^k(X,\Z)$. Thus, by computing $H^*(BG)$ we learn about topological invariants of $G$-bundles. A morphism of topological groups $G\ra H$ induces a morphism $BG\ra BH$, unique up to homotopy.

One choice for $B\U(1)$ is the infinite projective space $\CP^\iy$, and for $B\U(m)$ is the infinite Grassmannian $\mathop{\rm Gr}(\C^m,\C^\iy)$. The inclusion $\U(m)\hookra\U(m+1)$ by $A\mapsto\bigl(\begin{smallmatrix} A & 0 \\ 0 & 1 \end{smallmatrix}\bigr)$ induces a morphism $B\U(m)\ra B\U(m+1)$. Write $B\U$ for the (homotopy) direct limit $B\U=\varinjlim_{m\ra\iy}B\U(m)$. Then $B\U\t\Z$ {\it is the classifying space for complex K-theory}. That is, as in May \cite[p.~204-5]{May2}, for compact topological spaces $X$ there is a natural bijection
\e
K^0(X)\cong [X,B\U\t\Z]=\pi_0\bigl(\Map_{C^0}(X,B\U\t\Z)\bigr).
\label{or2eq13}
\e
More generally, if $X$ is noncompact, it is usual to take \eq{or2eq13} to be the definition of $K^0(X)$. We can also define higher K-theory groups as the higher homotopy groups $K^{-k}(X)=\pi_k\bigl(\Map_{C^0}(X,B\U\t\Z)\bigr)$. So by Bott periodicity $K^k(X)\cong K^{k+2}(X)$ we have
\e
K^1(X)\cong\pi_1\bigl(\Map_{C^0}(X,B\U\t\Z)\bigr).
\label{or2eq14}
\e

Define $\Pi_m:B\U(m)\ra B\U\t\Z$ for $m\ge 0$ to map $B\U(m)\ra B\U$ from the direct limit $B\U=\varinjlim_{m\ra\iy}B\U(m)$, and to map $B\U(m)\ra m\in\Z$. Then if a principal $\U(m)$-bundle $P\ra X$ corresponds to $f_P:X\ra B\U(m)$, its K-theory class $\lb P\rb\in K^0(X)$ corresponds to~$\Pi_m\ci f_P:X\ra B\U\t\Z$.

It is known (this is essentially equivalent to the `stable range' in \S\ref{or231}) that the morphisms $B\U(m)\ra B\U(m+1)$ and $B\U(m)\ra B\U$ induce isomorphisms on homotopy groups $\pi_k(-)$ when $k\le 2m$ and are surjective for $\pi_{2m+1}(-)$. It follows that if $X$ is a compact $n$-manifold then
\e
\begin{gathered}
\pi_k(\Pi_m\ci):\pi_k\bigl(\Map_{C^0}(X,B\U(m))\bigr)\longra\pi_k\bigl(\Map_{C^0}(X,B\U\t\Z)\bigr)\\
\text{is an isomorphism if $0<k\le 2m-n$,}
\end{gathered}
\label{or2eq15}
\e
where we exclude $k=0$ because of the $\Z$ factor in~$B\U\t\Z$.

The inclusion $\U(m)\t\U(m')\ra\U(m+m')$ mapping $(A,B)\mapsto\bigl(\begin{smallmatrix} A & 0 \\ 0 & B \end{smallmatrix}\bigr)$ induces a morphism $\mu_{m,m'}:B\U(m)\t B\U(m')\ra B\U(m+m')$. We interpret this in terms of direct sums: if $P\ra X$, $Q\ra X$ are principal $\U(m)$, $\U(m')$-bundles corresponding to $f_P:X\ra B\U(m)$, $f_Q:X\ra B\U(m')$ then $\mu_{m,m'}\ci(f_P,f_Q):X\ra B\U(m+m')$ corresponds to the principal $\U(m+m')$-bundle~$P\op Q\ra X$.

Let $\mu=\varinjlim_{m,m'\ra\iy}\mu_{m,m'}:B\U\t B\U\ra B\U$. Then $\mu$ is homotopy commutative and associative, and makes $B\U$ into an H-space. We can also define $\mu':(B\U\t\Z)\t(B\U\t\Z)\ra B\U\t\Z$ as the product of $\mu:B\U\t B\U\ra B\U$ and $+:\Z\t\Z\ra\Z$. Then $\mu'$ induces the operation of addition on $K^0(X)\cong[X,B\U\t\Z]$, which comes from direct sum of vector bundles.

Similarly $B{\rm O}\t\Z$, $B\Sp\t\Z$ are the classifying spaces for $KO^0(-),KSp^0(-)$, where $B{\rm O}=\varinjlim_{m\ra\iy}B{\rm O}(m)$ and~$B\Sp=\varinjlim_{m\ra\iy}B\Sp(m)$.

\subsubsection{Homotopy theory of topological stacks}
\label{or234}

In \S\ref{or24} we will deal with the topological stacks $\B_P=[\A_P/\G_P]$ of Definition \ref{or1def1} in a homotopy-theoretic way. Fortunately, Noohi \cite{Nooh2} provides a homotopy theory for topological stacks. As $\B_P=[\A_P/\G_P]$ is a global quotient with $\A_P,\G_P$ paracompact, it is {\it hoparacompact\/} in the sense of \cite[Def.~8.5]{Nooh2}.

Noohi defines a {\it classifying space\/} for a hoparacompact topological stack $S$ to be a paracompact topological space $S^\cla$ with a morphism $\pi^\cla:S^\cla\ra S$ in $\Ho(\TopSta)$ which is `parashrinkable' (i.e.\ in a weak sense a fibration with contractible fibres). Then $S^\cla$ is homotopy equivalent to $S$ in the category $\Ho(\TopSta_{\bf hp})$ of hoparacompact topological spaces.

Classifying spaces are functorial: there is a functor $(-)^\cla:\TopSta_{\bf hp}^{\bf ho}\ra\Top_{\bf pa}^{\bf ho}$ from the category of hoparacompact topological stacks with homotopy classes of 1-morphisms, to the category of paracompact topological spaces with homotopy classes of morphisms, which is right adjoint to the inclusion $\Top_{\bf pa}^{\bf ho}\hookra\TopSta_{\bf hp}^{\bf ho}$, with $\pi^\cla$ the unit of the adjunction. We also write $(-)^\cla$ for the composition $\Ho(\TopSta_{\bf hp})\ra\TopSta_{\bf hp}^{\bf ho}\,{\buildrel(-)^\cla\over\longra}\,\Top_{\bf pa}^{\bf ho}\hookra\Topho$.

The moral is that we can replace the topological stacks $\B_P$ with their classifying spaces $\B_P^\cla$, and then apply homotopy theory of topological spaces to $\B_P^\cla$ in the usual way. A possible model for $\B_P^\cla$ is $\B_P^\cla=(\A_P\t E\G_P)/\G_P$, so that $\pi^\cla:\B_P^\cla\ra\B_P$ is a genuine fibration with contractible fibre $E\G_P$.

\subsubsection{On pullbacks of principal $\Z_2$-bundles by homotopic morphisms}
\label{or235}

\begin{rem}
\label{or2rem5}
Let $[f]:X\ra Y$ be a morphism in $\Topho$, and $\pi:P\ra Y$ a principal $\Z_2$-bundle. Can we make sense of a `pullback $\Z_2$-bundle'~$[f]^*(P)\ra X$?

Let $f_0,f_1:X\ra Y$ represent $[f]$. Then there is a homotopy $(f_t)_{t\in[0,1]}$ interpolating between $f_0,f_1$. We have two principal $\Z_2$-bundles $f_0^*(P)\ra X$, $f_1^*(P)\ra X$, which are joined by a continuous path of $\Z_2$-bundles $f_t^*(P)$ for $t\in[0,1]$. Parallel translation along this path induces an isomorphism $f_0^*(P)\cong f_1^*(P)$ of principal $\Z_2$-bundles on $X$. Hence if $f$ represents $[f]$ then $f^*(P)\ra X$ depends only on $[f],P$ {\it up to isomorphism\/} of principal $\Z_2$-bundles. But when does $f^*(P)$ depend only on $[f],P$ {\it up to canonical isomorphism}?

If $P\ra Y$ is a trivializable principal $\Z_2$-bundle, it is easy to show that the isomorphism $f_0^*(P)\cong f_1^*(P)$ above is independent of the choice of homotopy $(f_t)_{t\in[0,1]}$, so $[f]^*(P):=f^*(P)$ is natural up to canonical isomorphism. However, as in Example \ref{or2ex9} below, if $P$ is nontrivial then $f_0^*(P)\cong f_1^*(P)$ may depend on $(f_t)_{t\in[0,1]}$, so $[f]^*(P)$ is {\it not\/} natural up to canonical isomorphism.

We have two answers to the problems this raises in our theory:
\begin{itemize}
\setlength{\itemsep}{0pt}
\setlength{\parsep}{0pt}
\item[(a)] As in Problem \ref{or1prob}, here and in the sequels \cite{JoUp,CGJ}, given an orientation $\Z_2$-bundle such as $O^{E_\bu}_P\ra\B_P$ or $O_\al^{E_\bu}\ra\cC_\al$ below, we first want to know if $O^{E_\bu}_P,O_\al^{E_\bu}$ are trivializable (so $O^{E_\bu}_P,O_\al^{E_\bu}$ matter only up to isomorphism), and secondly, if it is trivializable we want to know about canonical isomorphisms (so $O^{E_\bu}_P,O_\al^{E_\bu}$ matter up to canonical isomorphism).

So in practice, if an orientation bundle $P$ is not trivializable, we do not care that pullbacks $[f]^*(P)$ are not natural up to canonical isomorphism. 
\item[(b)] Actually, in the situation of \S\ref{or24}--\S\ref{or25}, there is always additional structure allowing us to specify pullbacks $[f]^*(P)$ uniquely up to canonical isomorphism. So the problem goes away. We illustrate this in Example~\ref{or2ex10}. 
\end{itemize}
\end{rem}

\begin{ex} Let $X=*$ be the point, $Y=\cS^1=\bigl\{z\in\C:\md{z}=1\bigr\}$, and $P\ra Y$ be the nontrivial $\Z_2$-bundle, and $f_0,f_1:X\ra Y$ be $f_0(*)=1$, $f_1(*)=-1$. Consider the homotopies 
$(f_t)_{t\in[0,1]}$, $(f'_t)_{t\in[0,1]}$ given by $f_t(*)=e^{i\pi t}$, $f'_t(*)=e^{-i\pi t}$. As the monodromy of $P$ around $\cS^1$ is $-1$, we see that $(f_t)_{t\in[0,1]}$ and $(f'_t)_{t\in[0,1]}$ induce different isomorphisms~$f_0^*(P)\cong f_1^*(P)$.
\label{or2ex9}	
\end{ex}

\begin{ex}{\bf(a)} Let $X$ be a topological space and $P\ra X$ a principal $G$-bundle. As in \S\ref{or233}, there exists a map $f_P:X\ra BG$, unique up to homotopy, with an isomorphism $P\cong f_P^*(EG)$. Choices of $f_P$ correspond to sections $s\in\Ga\bigl((P\t EG)/G\bigr)$ of the bundle 
$(P\t EG)/G\ra X$, with fibre~$EG$.

Let $f_P^0,f_P^1$ be choices of $f_P$, corresponding to $s^0,s^1\in\Ga\bigl((P\t EG)/G\bigr)$. As $EG$ is contractible, $\Ga\bigl((P\t EG)/G\bigr)$ is connected, so there is a path $(s^t)_{t\in[0,1]}$ from $s^0$ to $s^1$ in $\Ga\bigl((P\t EG)/G\bigr)$, giving a homotopy $(f^t_P)_{t\in[0,1]}$ from $f_P^0$ to $f_P^1$. But we can say more: $\Ga\bigl((P\t EG)/G\bigr)$ is contractible, so two such paths $(s^t)_{t\in[0,1]},$ $(\ti s^t)_{t\in[0,1]}$ are joined by a homotopy of paths, and the corresponding $(f^t_P)_{t\in[0,1]}$, $(\ti f^t_P)_{t\in[0,1]}$ are joined by a homotopy of homotopies from $f^0_P$ to $f^1_P$. Thus, any isomorphism of principal $\Z_2$-bundles constructed using a homotopy $(f^t_P)_{t\in[0,1]}$ defined using $(s^t)_{t\in[0,1]}$ is independent of the choice of~$(f^t_P)_{t\in[0,1]}$.
\smallskip

\noindent{\bf(b)} In \S\ref{or233} we defined $\mu':(B\U\t\Z)\t(B\U\t\Z)\ra B\U\t\Z$, which is commutative and associative up to homotopy, so $[\mu']$ is commutative and associative in $\Topho$, and makes $B\U\t\Z$ into a commutative, associative H-space.

It is natural to enhance the H-space structure on $B\U\t\Z$ to a $\Ga$-{\it space}, as in Segal \cite[\S 1]{Sega}, or more-or-less equivalently an $E_\iy$-{\it space}, as in May \cite{May1}. The $\Ga$- or $E_\iy$-space structures provide choices of homotopies in $\Top$ lifting all the abelian group identities of $[\mu']$ in $\Topho$, and these homotopies are natural up to homotopies of homotopies. Thus, any isomorphism of principal $\Z_2$-bundles constructed using homotopies realizing commutativity or associativity of $[\mu']$ can be made independent of choices.
\label{or2ex10}	
\end{ex}

\subsection{\texorpdfstring{$\U(m)$-bundles and the mapping space $\Map_{C^0}(X,B\U\t\Z)$}{U(m)-bundles and the mapping space Mapᶜ⁰(X,BU x ℤ)}}
\label{or24}

Let $X$ and $E_\bu$ be fixed. We now explain a useful framework for studying orientations on $\B_P$ simultaneously for all principal $\U(m)$-bundles $P\ra X$ and all $m\ge 1$, using the complex K-theory groups $K^0(X),K^1(X)$ and the topological mapping space $\cC:=\Map_{C^0}(X,B\U\t\Z)$. This can then be used to study orientations on $\B_Q$ for all principal $\SU(m)$-bundles $Q\ra X$. Parts of the theory also work for $\SO(m)$-bundles and $\Sp(m)$-bundles.

\subsubsection{The Euler form $\chi^{E_\bu}:K^0(X)\t K^0(X)\ra\Z$}
\label{or241}

\begin{dfn}
\label{or2def1}
Work in the situation of Definition \ref{or1def2}, and use the K-theory notation of \S\ref{or231}. Let $P_1\ra X$, $P_2\ra X$ be principal $\U(m_1)$- and $\U(m_2)$-bundles with $\lb P_1\rb=\al$ and $\lb P_2\rb=\be$ in $K^0(X)$. Choose connections $\nabla_{P_1},\nabla_{P_2}$ on $P_1,P_2$. Let $\rho_{12}:\U(m_1)\t\U(m_2)\ra\Aut_\C(\C^{m_1}\ot_\C\overline{\C^{m_2}})$ be the tensor product of the usual representation of $\U(m_1)$ on $\C^{m_1}$ and the complex conjugate representation of $\U(m_2)$ on $\C^{m_2}$. We have a vector bundle $\rho_{12}(P_1\t_XP_2)=(P_1\t_XP_2 \t\C^{m_1}\ot_\C\overline{\C^{m_2}})/\U(m_1)\t\U(m_2)$ over $X$ with fibre $\C^{m_1}\ot_\C\overline{\C^{m_2}}$, with a connection $\nabla_{\rho_{12}(P_1\t_XP_2)}$ induced by $\nabla_{P_1},\nabla_{P_2}$. Thus as in Definition \ref{or1def2} we may form the twisted complex elliptic operator
\begin{equation*}
D^{\nabla_{\rho_{12}(P_1\t_XP_2)}}:\Ga^\iy(\rho_{12}(P_1\t_XP_2)\ot E_0)\longra\Ga^\iy(\rho_{12}(P_1\t_XP_2)\ot E_1).
\end{equation*}

The complex index $\ind_\C(D^{\nabla_{\rho_{12}(P_1\t_XP_2)}})$ is independent of $\nabla_{P_1},\nabla_{P_2}$, and so depends only on $X,E_\bu,\al,\be$. Replacing $P_1$ or $P_2$ by a direct sum $P_1'\op P_1''$, $P_2'\op P_2''$ gives a direct sum of the corresponding elliptic operators. Hence $\ind_\C(D^{\nabla_{\rho_{12}(P_1\t_XP_2)}})$ is biadditive in $\al,\be$. Therefore there exists a unique biadditive map $\chi^{E_\bu}:K^0(X)\t K^0(X)\ra\Z$ which we call the {\it Euler form}, such that $\ind_\C(D^{\nabla_{\rho_{12}(P_1\t_XP_2)}})=\chi^{E_\bu}(\al,\be)$ for all $P_1,P_2,\al,\be$ as above.

Swapping round $P_1,P_2$, $m_1,m_2$ and $\al,\be$ replaces $\C^{m_1}\ot\overline{\C^{m_2}}$ by $\C^{m_2}\ot\overline{\C^{m_1}}$, and so complex conjugates $\rho_{12}(P_1\t_XP_2)$ and $D^{\nabla_{\rho_{12}(P_1\t_XP_2)}}$, which does not change the index. Hence $\chi^{E_\bu}(\al,\be)=\chi^{E_\bu}(\be,\al)$ for all~$\al,\be\in K^0(X)$.

When $P_1=P_2=P$ and $\al=\be$, we have $\rho_{12}(P\t_XP)\cong\Ad(P)\ot_\R\C$, so the complex index $\ind_\C(D^{\nabla_{\rho_{12}(P\t_XP)}})$ equals the real index $\ind_\R(D^{\nabla_{\Ad(P)}})$. Therefore for $\ind^{E_\bu}_P$ as in Definition \ref{or1def2}, we have
\e
\ind^{E_\bu}_P=\chi^{E_\bu}(\lb P\rb,\lb P\rb).
\label{or2eq16}
\e

If $P_1\ra X$ and $P_2\ra X$ are principal $\U(m_1)$- and $\U(m_2)$-bundles with $\lb P_1\rb=\al$ and $\lb P_2\rb=\be$, then Example \ref{or2ex4} defines an isomorphism $\phi_{P_1,P_2}:O_{P_1}^{E_\bu}\bt_{\Z_2} O_{P_2}^{E_\bu}\ra
\Phi_{P_1,P_2}^*(O_{P_1\op P_2}^{E_\bu})$, and \eq{or2eq11} relates $\phi_{P_2,P_1},\phi_{P_1,P_2}$ under the isomorphism $\B_{P_2}\t\B_{P_1}\cong\B_{P_1}\t\B_{P_2}$. The first sign in \eq{or2eq11} is written in terms of $\chi^{E_\bu}$ by \eq{or2eq16}, and for the second we have
\begin{equation*}
\ha\ind^{E_\bu}_{P_1\t_XP_2,\rho}=\ha\ind_\R(D^{\nabla_{\rho_{12}(P_1\t_XP_2)}})=\ind_\C(D^{\nabla_{\rho_{12}(P_1\t_XP_2)}})=\chi^{E_\bu}(\al,\be).
\end{equation*}
Hence \eq{or2eq11} may be rewritten
\e
\phi_{P_2,P_1}=(-1)^{\chi^{E_\bu}(\al,\be)+\chi^{E_\bu}(\al,\al)\chi^{E_\bu}(\be,\be)}\cdot\phi_{P_1,P_2}.
\label{or2eq17}
\e	
\end{dfn}

\subsubsection{The mapping spaces $\cC,\cC_\al$ and orientation bundles $O_\al^{E_\bu}$}
\label{or242}

\begin{dfn}
\label{or2def2}
Let $X$ be a compact, connected manifold of dimension $n$, and use the notation of \S\ref{or23}. Write $\cC=\Map_{C^0}(X,B\U\t\Z)$ for the topological space of continuous maps $X\ra B\U\t\Z$, with the compact-open topology. Equation \eq{or2eq13} identifies the set $\pi_0(\cC)$ of path-connected components of $\cC$ with $K^0(X)$. Write $\cC_\al$ for the connected component of $\cC$ corresponding to $\al\in K^0(X)$ under \eq{or2eq13}, so that $\cC=\coprod_{\al\in K^0(X)}\cC_\al$.

Define $\Phi:\cC\t\cC\ra\cC$ by $\Phi:(f,g)\mapsto\mu'\ci(f,g)$, where $\mu':(B\U\t\Z)^2\ra B\U\t\Z$ is as in \S\ref{or233}. Then $\Phi$ is natural, commutative, and associative, up to homotopy, as $\mu'$ is, so $[\Phi]$ is natural, commutative, and associative in $\Topho$. Write $\Phi_{\al,\be}=\Phi\vert_{\cC_\al\t\cC_\be}:\cC_\al\t\cC_\be\ra\cC_{\al+\be}$ for $\al,\be\in K^0(X)$. Then the following diagrams commute up to homotopy for all $\al,\be,\ga\in K^0(X)$, where $\si:\cC_\al\t\cC_\be\ra\cC_\be\t\cC_\al$ exchanges the two factors: 
\ea
\begin{gathered}
\xymatrix@C=90pt@R=15pt{ *+[r]{\cC_\al\t\cC_\be} \ar@{}[dr]^\simeq \ar@/^1pc/[drr]^(0.6){\Phi_{\al,\be}} \ar[d]^{\si} \\
*+[r]{\cC_\be\t\cC_\al} \ar[rr]^(0.4){\Phi_{\be,\al}} && *+[l]{\cC_{\al+\be},\!} 
}
\end{gathered}
\label{or2eq18}\\
\begin{gathered}
\xymatrix@C=160pt@R=15pt{ *+[r]{\cC_\al\t\cC_\be\t\cC_\ga} \ar@{}[dr]^(0.6)\simeq \ar[r]_(0.43){\id_{\cC_\al}\t\Phi_{\be,\ga}} \ar[d]^{\Phi_{\al,\be}\t\id_{\cC_\ga}} &
*+[l]{\cC_\al\t\cC_{\be+\ga}} \ar[d]_{\Phi_{\al,\be+\ga}}\\
*+[r]{\cC_{\al+\be}\t\cC_\ga} \ar[r]^(0.7){\Phi_{\al+\be,\ga}} & *+[l]{\cC_{\al+\be+\ga}.\!} 
}
\end{gathered}
\label{or2eq19}
\ea

Now let $P\ra X$ be a principal $\U(m)$-bundle. Choose a classifying space $\pi^\cla:\B_P^\cla\ra\B_P$ for the topological stack $\B_P$, as in \S\ref{or234}. There is a universal principal $\U(m)$-bundle $U_P=(P\t\A_P)/\G_P\ra X\t\B_P$, so $(\id_X\t\pi^\cla)^*(U_P)\ra X\t\B_P^\cla$ is a principal $\U(m)$-bundle over a paracompact topological space. As in \S\ref{or233} this corresponds to some $f_P:X\t\B_P^\cla\ra B\U(m)$. Write $\Si_P:\B_P^\cla\ra\Map_{C^0}(X,B\U(m))$ for the corresponding map. Then $f_P,\Si_P$ are unique up to homotopy, so $[\Si_P]$ is unique in~$\Topho$.

Connected components of $\Map_{C^0}(X,B\U(m))$ correspond to isomorphism classes $[Q]$ of principal $\U(m)$-bundles $Q\ra X$. Write $\Map_{C^0}(X,B\U(m))_{[P]}$ for the connected component corresponding to $[P]$. Using the arguments of Donaldson--Kronheimer \cite[Prop.~5.1.4]{DoKr} and Atiyah--Bott \cite[Prop.~2.4]{AtBo}, we see that $\Si_P:\B_P^\cla\ra\Map_{C^0}(X,B\U(m))_{[P]}$
is a homotopy equivalence.

Define $\Si_P^\cC:\B_P^\cla\ra\cC$ by $\Si_P^\cC:b\mapsto \Pi_m\ci\Si_P(b)$ for $\Pi_m:B\U(m)\ra B\U\t\Z$ as in \S\ref{or233}. Then $\Si_P^\cC$ maps $\B_P^\cla\ra\cC_\al$, where $\al=\lb P\rb\in K^0(X)$. Equation \eq{or2eq15} and $\Si_P$ a homotopy equivalence yield
\e
\pi_k(\Si_P^\cC):\pi_k(\B_P^\cla)\longra\pi_k(\cC_\al)\quad \text{is an isomorphism if $0\le k\le 2m-n$,}
\label{or2eq20}
\e
where the case $k=0$ is trivial as $\B_P^\cla,\cC_\al$ are connected.

Suppose $Q\ra X$ is a principal $\U(m')$-bundle with $\lb Q\rb=\be\in K^0(X)$, so $P\op Q\ra X$ is a principal $U(m+m')$-bundle with $\lb P\op Q\rb=\al+\be$, and \eq{or2eq9} defines a morphism $\Phi_{P,Q}:\B_P\t\B_Q\ra\B_{P\op Q}$. Then the following diagram commutes up to homotopy:
\e
\begin{gathered}
\xymatrix@!0@C=145pt@R=40pt{
*+[r]{\B_P^\cla\t\B_Q^\cla} \ar@<1.5ex>@/^.7pc/[rr]^{\Si_P^\cC\t\Si_Q^\cC} \ar@<-1ex>@{}[dr]^\simeq \ar[d]^{\Phi_{P,Q}^\cla} \ar[r]_(0.44){\Si_P\t\Si_Q} & {\begin{subarray}{l}\ts \Map_{C^0}(X,B\U(m))_{[P]} \t \\ \ts \Map_{C^0}(X,B\U(m'))_{[Q]}\end{subarray}} \ar@<-1ex>@{}[dr]^\simeq\ar[r]_(0.55){(\Pi_m\ci)\t(\Pi_{m'}\ci)} \ar[d]^(0.55){\mu_{m,m'}\ci} & *+[l]{\cC_\al\t\cC_\be} \ar[d]_{\Phi_{\al,\be}=\mu'\ci}
\\
*+[r]{\B^\cla_{P\op Q}} \ar@<-1ex>@/_.5pc/[rr]_{\Si_{P\op Q}^\cC} \ar[r]^(0.33){\Si_{P\op Q}} & \Map_{C^0}(X,B\U(m\!+\!m'))_{[P\op Q]} \ar[r]^(0.6){\Pi_{m+m'}\ci} & *+[l]{\cC_{\al+\be},}
}
\end{gathered}
\label{or2eq21}
\e
where the left hand square commutes as $\Phi_{P,Q},\mu_{m,m'}$ both come from direct sums, the right hand square commutes as $\mu=\varinjlim_{m,m'\ra\iy}\mu_{m,m'}$ in \S\ref{or233}, and the semicircles commute by definition of~$\Si_P^\cC,\Si_Q^\cC,\Si_{P\op Q}^\cC$.
\end{dfn}

\begin{dfn}
\label{or2def3}
Continue in the situation of Definition \ref{or2def2}, and let $E_\bu$ be a real elliptic operator on $X$. Let $P\ra X$ be a principal $\U(m)$-bundle, with $\lb P\rb=\al\in K^0(X)$, so that $\rank\al=m$, and suppose that~$2m>n=\dim X$. 

As in Definition \ref{or1def2} we have (n-)orientation bundles $O_P^{E_\bu}\ra \B_P$, $\check O_P^{E_\bu}\ra \B_P$, so pulling back gives principal $\Z_2$-bundles $(\pi^\cla)^*(O_P^{E_\bu}),(\pi^\cla)^*(\check O_P^{E_\bu})$ on $\B_P^\cla$. Equation \eq{or2eq20} implies that $\Si_P^\cC:\B_P^\cla\ra\cC_\al$ is an isomorphism on $\pi_0$ and $\pi_1$. Since principal $\Z_2$-bundles depend only on $\pi_0$ and $\pi_1$, this means that $(\Si_P^\cC)^*$ is an equivalence of categories from principal $\Z_2$-bundles on $\cC_\al$ to principal $\Z_2$-bundles on $\B_P^\cla$.

Thus, there exist principal $\Z_2$-bundles $O^{E_\bu}_\al\ra\cC_\al$, $\check O^{E_\bu}_\al\ra\cC_\al$, unique up to canonical isomorphism, with given isomorphisms
\e
\si_P^\cC:(\pi^\cla)^*(O_P^{E_\bu})\ra(\Si_P^\cC)^*(O_\al^{E_\bu}),\;\> \check\si_P^\cC:(\pi^\cla)^*(\check O_P^{E_\bu})\ra(\Si_P^\cC)^*(\check O_\al^{E_\bu}).
\label{or2eq22}
\e
As in \S\ref{or231}, as $2m\ge n$ the $\U(m)$-bundle $P\ra X$ with $\lb P\rb=\al$ is determined by $\al\in K^0(X)$ up to isomorphism, and such $P$ exist for any $\al$ with~$\rank\al=m$. 

We claim that $O^{E_\bu}_\al,\check O^{E_\bu}_\al$ are independent of all choices $P,\B_P^\cla,\Si_P^\cC$ up to canonical isomorphism, and so depend only on $X,E_\bu,\al$. To see this, note that $\Si_P^\cC=(\Pi_m\ci)\ci\Si_P$ depends on a choice of $f_P:X\t\B_P^\cla\ra B\U(m)$. As in Example \ref{or2ex10}(a), this $f_P$ lies in a contractible space. Thus, not only is $\Si_P^\cC$ unique up to homotopy, but the homotopies between two choices $\Si_{P,0}^\cC,\Si_{P,1}^\cC$ are themselves unique up to homotopy. Hence, in the discussion of Remark \ref{or2rem5}, the $\Z_2$-bundles $O_\al^{E_\bu},\check O_\al^{E_\bu}$ are independent of the choice of $\Si_P^\cC$ up to canonical isomorphism. Independence of $P,\B_P^\cla$ is also straightforward.

For all $\al\in K^0(X)$ with $2\rank\al>n$, we have now constructed principal $\Z_2$-bundles $O^{E_\bu}_\al\ra\cC_\al$, $\check O^{E_\bu}_\al\ra\cC_\al$, with natural isomorphisms $\si_P^\cC,\check\si_P^\cC$ in \eq{or2eq22} on $\B_P^\cla$ whenever $P\ra X$ is a principal $\U(m)$-bundle with $\lb P\rb=\al$. Shortly we will extend this to all $\al\in K^0(X)$, omitting the condition~$2\rank\al>n$.

Let $Q\ra X$ be a principal $\U(m')$-bundle with $\lb Q\rb=\be\in K^0(X)$, and suppose $2m'>n$. Then we have a homotopy commutative diagram \eq{or2eq21}. Consider the diagram of principal $\Z_2$-bundles 
on $\B_P^\cla\t\B_Q^\cla$, parallel to~\eq{or2eq21}:
\e
\begin{gathered}
\!\!\!\!\!\!\!\!\!\xymatrix@!0@C=148pt@R=45pt{
*+[r]{(\pi^\cla)^*(O_P^{E_\bu})\bt_{\Z_2}(\pi^\cla)^*(O_Q^{E_\bu})} \ar[rr]_{\si_P^\cC\bt\si_Q^\cC} \ar[d]^(0.4){(\pi^\cla)^*(\phi_{P,Q})} && *+[l]{(\Si_P^\cC)^*(O^{E_\bu}_\al)\bt_{\Z_2}(\Si_Q^\cC)^*(O^{E_\bu}_\be)} \ar@{.>}[d]_(0.4){(\Si_P^\cC\t\Si_Q^\cC)^*(\phi_{\al,\be})}
\\
*+[r]{\!\!\quad\begin{subarray}{l}\ts (\pi^\cla)^*(\Phi_{P,Q}^*(O_{P\op Q}^{E_\bu}))\!\simeq \\ \ts (\Phi_{P,Q}^\cla)^*\!\ci\!(\pi^\cla)^*(O_{P\op Q}^{E_\bu}) \end{subarray}} \ar[r]^(0.8){\begin{subarray}{l}(\Phi_{P,Q}^\cla)^* \\ (\si_{P\op Q}^\cC)\end{subarray}} & *+[r]{\begin{subarray}{l}\ts \qquad(\Phi_{P,Q}^\cla)^*\ci \\ \ts (\Si_{P\op Q}^\cC)^*(O^{E_\bu}_{\al+\be}) \end{subarray}} \ar[r]^\simeq_{\eq{or2eq21}}
& *+[l]{\begin{subarray}{l}\ts (\Si_P^\cC\t\Si_Q^\cC)^*\ci \\ \ts \Phi_{\al,\be}^*(O^{E_\bu}_{\al+\be}).\end{subarray}\quad\!\!}
}\!\!\!\!\!\!\!\!\!
\end{gathered}
\label{or2eq23}
\e
Here the two `$\simeq$' are isomorphisms relating pullbacks of the same bundle by homotopic morphisms, as in Remark \ref{or2rem5}. As in Remark \ref{or2rem5}(b) and Example \ref{or2ex10}, we can choose the homotopies canonically up to homotopies of homotopies, so the two `$\simeq$' are independent of choices.

Since $\Si_P^\cC\t\Si_Q^\cC$ is an isomorphism on $\pi_0$ and $\pi_1$ by \eq{or2eq20}, there is a unique isomorphism of principal $\Z_2$-bundles on $\cC_\al\t\cC_\be$ making \eq{or2eq23} commute:
\e
\phi_{\al,\be}:O_\al^{E_\bu}\bt_{\Z_2}O_\be^{E_\bu}\longra\Phi_{\al,\be}^*(O_{\al+\be}^{E_\bu}).
\label{or2eq24}
\e
The analogous argument using n-orientation bundles gives an isomorphism
\e
\check\phi_{\al,\be}:\check O_\al^{E_\bu}\bt_{\Z_2}\check O_\be^{E_\bu}\longra\Phi_{\al,\be}^*(\check O_{\al+\be}^{E_\bu}).
\label{or2eq25}
\e
The proof above showing $O^{E_\bu}_\al,\check O^{E_\bu}_\al$ are independent of choices implies that $\phi_{\al,\be},\check\phi_{\al,\be}$ are independent of choices $P,Q,\ldots,$ and depend only on $X,E_\bu,\al,\be$.

Let $\al,\be,\ga\in K^0(X)$ with $2\rank\al,2\rank\be,2\rank\ga>n$. Then we have homotopy commutative diagrams \eq{or2eq18}--\eq{or2eq19}. The $\phi_{\al,\be}$ in \eq{or2eq24} satisfy:
\begin{gather}
\si^*(\phi_{\be,\al})\,{\buildrel\eq{or2eq18}\over \simeq}\, (-1)^{\chi^{E_\bu}(\al,\be)+\chi^{E_\bu}(\al,\al)\chi^{E_\bu}(\be,\be)}\cdot\phi_{\al,\be},
\label{or2eq26}\\
\begin{split}
&(\Phi_{\al,\be}\t\id_{\cC_\ga})^*(\phi_{\al+\be,\ga})\ci(\phi_{\al,\be}\bt\id_{O_\ga^{E_\bu}})
\\
&\quad \,{\buildrel\eq{or2eq19}\over \simeq}\, (\id_{\cC_\al}\t\Phi_{\be,\ga})^*(\phi_{\al,\be+\ga})\ci(\id_{O_\al^{E_\bu}}\bt\phi_{\be,\ga}).
\end{split}
\label{or2eq27}
\end{gather}

Here two sides of \eq{or2eq26} are isomorphisms $O_\al^{E_\bu}\bt_{\Z_2}O_\be^{E_\bu}\ra(\Phi_{\be,\al}\ci\si)^*(O_{\al+\be}^{E_\bu})$ and $O_\al^{E_\bu}\bt_{\Z_2}O_\be^{E_\bu}\ra\Phi_{\al,\be}^*(O_{\al+\be}^{E_\bu})$, where $\Phi_{\be,\al}\ci\si\simeq\Phi_{\al,\be}$ by \eq{or2eq18}, and \eq{or2eq26} means the two sides are identified by the isomorphism $(\Phi_{\be,\al}\ci\si)^*(O_{\al+\be}^{E_\bu})\cong \Phi_{\al,\be}^*(O_{\al+\be}^{E_\bu})$ from parallel translation along the homotopy in \eq{or2eq18}. As in Remark \ref{or2rem5}(b) and Example \ref{or2ex10}, we can choose this homotopy canonically up to homotopies of homotopies, so the isomorphism $(\Phi_{\be,\al}\ci\si)^*(O_{\al+\be}^{E_\bu})\cong \Phi_{\al,\be}^*(O_{\al+\be}^{E_\bu})$ is independent of choices. Equation \eq{or2eq27} is interpreted the same way. We prove \eq{or2eq26}--\eq{or2eq27} by combining \eq{or2eq23} with the analogues \eq{or2eq17} and \eq{or2eq12} for the $\phi_{P,Q}$. The analogues of \eq{or2eq26}--\eq{or2eq27} also hold for the $\check\phi_{\al,\be}$ in~\eq{or2eq25}.

Let $\al,\be\in K^0(X)$ with $2\rank\al,2\rank\be>n$, and pick a base-point $b$ in $\cC_\be$. Then restricting \eq{or2eq24} to $\cC_\al\t\{b\}\cong\cC_\al$ gives a canonical isomorphism of principal $\Z_2$-bundles on $\cC_\al$:
\e
O_\al^{E_\bu}\cong (\Phi_{\al,\be}(-,b))^*(O_{\al+\be}^{E_\bu})\ot_{\Z_2}(O_\be^{E_\bu}\vert_b)^*.
\label{or2eq28}
\e
Observe that the right hand side of \eq{or2eq28} makes sense even if $2\rank\al\le n$, provided $2\rank\be,2\rank(\al+\be)>n$, for example if $\be=N\boo_X$ for $N\gg 0$. Thus we can take \eq{or2eq28} to be the {\it definition\/} of $O_\al^{E_\bu}$ when $2\rank\al\le n$. Straightforward arguments using the associativity property \eq{or2eq27} show that \eq{or2eq28} is independent of the choice of $\be$ and $b\in\cC_\be$ up to canonical isomorphism. We define $\check O_\al^{E_\bu}\ra\cC_\al$ when $2\rank\al\le n$ using \eq{or2eq25} in the same way.

Define principal $\Z_2$-bundles $O^{E_\bu}\ra\cC$, $\check O^{E_\bu}\ra\cC$ by $O^{E_\bu}\vert_{\cC_\al}=O^{E_\bu}_\al$ and $\check O^{E_\bu}\vert_{\cC_\al}=\check O^{E_\bu}_\al$ for all~$\al\in K^0(X)$.

We can also show that the $\phi_{\al,\be}$ in \eq{or2eq24} for $2\rank\al,2\rank\be>n$ extend uniquely to $\phi_{\al,\be}$ for all $\al,\be\in K^0(X)$, such that \eq{or2eq26}--\eq{or2eq27} hold for all $\al,\be,\ga\in K^0(X)$, and the definition \eq{or2eq28} of $O_\al^{E_\bu}$ for $2\rank\al\le n$ is identified with $\phi_{\al,\be}\vert_{C_\al\t\{b\}}$. We extend $\check\phi_{\al,\be}$ in \eq{or2eq25} to all $\al,\be$ in the same way.

Suppose $P\ra X$ is a principal $\U(m)$-bundle for $2m\le n$, and $\al=\lb P\rb\in K^0(X)$. Choose $\be\in K^0(X)$ with $2\rank\be,2\rank(\al+\be)>n$, set $m'=\rank\be$, and let $Q\ra X$ be a principal $\U(m')$-bundle with $\lb Q\rb=\be$. Then in \eq{or2eq23}, all morphisms are defined except $\si_P^\cC$. As for \eq{or2eq28}, picking a base-point $b\in\B_Q^\cla$ there is a unique isomorphism $\si_P^\cC$ in \eq{or2eq22} such that the restriction of \eq{or2eq23} to $\B_P^\cla\t\{b\}$ commutes. Using \eq{or2eq12} and \eq{or2eq27} we can show $\si_P^\cC$ is independent of $\be,Q,b$, and that \eq{or2eq23} commutes for all $P,Q$ without supposing $2m,2m'>n$. We construct $\check\si_P^\cC$ in \eq{or2eq22} in the same way.

In the obvious way, we say that $\cC,\cC_\al$ are {\it orientable\/} (or {\it n-orientable\/}) for $\al$ in $K^0(X)$ if $O^{E_\bu},O_\al^{E_\bu}$ (or $\check O^{E_\bu},\check O_\al^{E_\bu}$) are trivializable, and an {\it orientation\/} $\om_\al$ (or {\it n-orientation\/} $\check\om_\al$) for $\cC_\al$ is a trivialization $O_\al^{E_\bu}\cong\cC_\al\t\Z_2$ (or~$\check O_\al^{E_\bu}\cong\cC_\al\t\Z_2$).
\end{dfn}         

\begin{rem}
\label{or2rem6}	
{\bf(a)} The importance of \eq{or2eq22} is that it shows that if $\cC_\al$ is orientable then $\B_P$ is orientable for any principal $\U(m)$-bundle $P\ra X$ with $\lb P\rb =\al$ in $K^0(X)$, and an orientation for $\cC_\al$ induces orientations on $\B_P$ for all such $P$. Hence, if we can construct orientations on $\cC_\al$ for all $\al\in K^0(X)$, we obtain orientations on $\B_P$ for all $\U(m)$-bundles $P\ra X$, for all~$m\ge 0$.
\smallskip

\noindent{\bf(b)} Here is an example of how to apply equations \eq{or2eq24} and \eq{or2eq26} above to orientations. Suppose $\al,\be\in K^0(X)$ with $\cC_\al,\cC_\be,\cC_{\al+\be}$ orientable, and choose orientations $\om_\al,\om_\be,\om_{\al+\be}$ on $\cC_\al,\cC_\be,\cC_{\al+\be}$, where $\om_\al:O_\al^{E_\bu}\,{\buildrel\cong\over\longra}\,\cC_\al\t\Z_2$, and so on. Then in \eq{or2eq24}, $\om_\al\bt\om_\be$ is a trivialization of $O_\al^{E_\bu}\bt_{\Z_2}O_\be^{E_\bu}\ra\cC_\al\t\cC_\be$, so $\phi_{\al,\be}(\om_\al\bt\om_\be)$ is a trivialization of $\Phi_{\al,\be}^*(O_{\al+\be}^{E_\bu})\ra\cC_\al\t\cC_\be$, as is $\Phi_{\al,\be}^*(\om_{\al+\be})$. Since $\cC_\al\t\cC_\be$ is connected, there is a unique $\ep_{\al,\be}=\pm 1$ with
\begin{align*}
\phi_{\al,\be}(\om_\al\bt\om_\be)&=\ep_{\al,\be}\cdot \Phi_{\al,\be}^*(\om_{\al+\be}),\quad\text{and similarly}\\
\phi_{\be,\al}(\om_\be\bt\om_\al)&=\ep_{\be,\al}\cdot \Phi_{\be,\al}^*(\om_{\al+\be}).
\end{align*}
Then \eq{or2eq26} implies that $\ep_{\al,\be},\ep_{\be,\al}$ are related by
\e
\ep_{\be,\al}=(-1)^{\chi^{E_\bu}(\al,\be)+\chi^{E_\bu}(\al,\al)\chi^{E_\bu}(\be,\be)}\cdot\ep_{\al,\be}.
\label{or2eq29}
\e
The same methods work for \eq{or2eq27}, and for n-orientations.
\smallskip

\noindent{\bf(c)} Equations \eq{or2eq26}--\eq{or2eq27} need to be interpreted carefully as they relate isomorphisms by homotopic morphisms. However, when we apply them to orientations on $\cC_\al,\cC_\be,\cC_\ga$ as in {\bf(b)}, they simplify, as we need not worry about homotopies. Also, as in Remark \ref{or2rem5}, in the orientable case the issue of pullbacks by homotopic morphisms being non-canonically isomorphic disappears.
\end{rem}

\subsubsection{The isomorphism $\pi_1(\cC_\al)\cong K^1(X)$, and orientability}
\label{or243}
 
\begin{prop}
\label{or2prop3}	
In Definition\/ {\rm\ref{or2def3},} for each\/ $\al\in K^0(X),$ we have:
\begin{itemize}
\setlength{\itemsep}{0pt}
\setlength{\parsep}{0pt}
\item[{\bf(a)}] $\cC_\al$ is homotopy-equivalent to $\cC_0$.
\item[{\bf(b)}] $\cC_\al$ is orientable if and only if\/ $\cC_0$ is orientable.
\item[{\bf(c)}] The fundamental group is $\pi_1(\cC_\al)\cong K^1(X),$ for $K^1(X)$ as in\/ {\rm\S\ref{or231}}.
\end{itemize}
\end{prop}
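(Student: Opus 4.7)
The plan is to exploit the $E_\infty$-structure on $B\U\t\Z$ coming from direct sum of vector bundles, which makes $\cC$ into an H-space under $\Phi$ whose set of components $\pi_0(\cC)=K^0(X)$ is a \emph{group} (not just a monoid), so every component has an inverse. Pick any base-point $f_\al\in\cC_\al$. Since $-\al+\al=0$ in $K^0(X)$ we may choose $f_{-\al}\in\cC_{-\al}$ such that $\Phi(f_\al,f_{-\al})$ lies in the same path-component as the constant map $0_\cC:X\ra\{0\}\t\{0\}\subset B\U\t\Z$, which is a homotopy unit for $[\Phi]$. Then the translation maps $T_\al=\Phi(-,f_\al):\cC_0\ra\cC_\al$ and $T_{-\al}=\Phi(-,f_{-\al}):\cC_\al\ra\cC_0$ are mutually homotopy inverse by the associativity and unit properties of $[\Phi]$ (which follow from \eq{or2eq19} and $\Phi(-,0_\cC)\simeq\id_\cC$). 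This proves~(a).

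For (c), using (a) it suffices to compute $\pi_1(\cC_0)$. Based at $0_\cC$, the space $\Map_{C^0}(X,B\U\t\Z)$ represents complex K-theory in all degrees, so
\begin{equation*}
\pi_1(\cC_0,0_\cC)\cong\pi_1\bigl(\Map_{C^0}(X,B\U\t\Z),0_\cC\bigr)\cong K^{-1}(X)\cong K^1(X),
\end{equation*}
using \eq{or2eq14} and Bott periodicity. Since $\cC_0$ is an H-space the fundamental group is abelian and independent of the basepoint within $\cC_0$, and (a) then transports the identification to every $\cC_\al$, yielding~(c).

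For (b), the key ingredient is the isomorphism $\phi_{0,\al}:O_0^{E_\bu}\bt_{\Z_2}O_\al^{E_\bu}\,{\buildrel\cong\over\longra}\,\Phi_{0,\al}^*(O_\al^{E_\bu})$ of \eq{or2eq24}. Restricting to the slice $\cC_0\t\{f_\al\}\cong\cC_0$ gives a canonical isomorphism of principal $\Z_2$-bundles over $\cC_0$:
\begin{equation*}
O_0^{E_\bu}\ot_{\Z_2}\bigl(O_\al^{E_\bu}\vert_{f_\al}\bigr)\,{\buildrel\cong\over\longra}\,T_\al^*(O_\al^{E_\bu}).
\end{equation*}
The factor $O_\al^{E_\bu}\vert_{f_\al}$ is a single $\Z_2$-torsor, so the left hand side is trivializable if and only if $O_0^{E_\bu}$ is. On the other hand, since $T_\al$ is a homotopy equivalence by (a), $T_\al^*$ gives a bijection on isomorphism classes of principal $\Z_2$-bundles, so the right hand side is trivializable if and only if $O_\al^{E_\bu}$ is. Combining these yields~(b).

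The one place requiring care is (b): the pullback of a non-trivializable principal $\Z_2$-bundle along a homotopy equivalence is a priori only well-defined up to non-canonical isomorphism (Remark \ref{or2rem5}), so we must avoid constructing the comparison merely from the homotopy equivalence $T_\al$. The device that bypasses this is the already-constructed genuine (not just up-to-homotopy) isomorphism $\phi_{0,\al}$ of \eq{or2eq24}, whose restriction gives a canonical identification. Everything else is formal manipulation with the H-group structure on $\cC$ and with the representing space for K-theory.
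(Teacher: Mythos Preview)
Your proof is correct and follows essentially the same approach as the paper: part (a) via H-space translation by points in $\cC_{\pm\al}$ and associativity \eq{or2eq19}, part (c) via \eq{or2eq14}, and part (b) via restricting the isomorphism $\phi_{0,\al}$ of \eq{or2eq24} to a slice $\cC_0\t\{f_\al\}$ to compare trivializability of $O_0^{E_\bu}$ and $O_\al^{E_\bu}$. The only differences are cosmetic (ordering, and your explicit discussion of the Remark~\ref{or2rem5} subtlety, which the paper leaves implicit).
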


\begin{proof} For (a), let $\al\in K^0(X)$, choose points $p\in\cC_\al$, $q\in\cC_{-\al}$, and set $r=\Phi_{\al,-\al}(p,q)$ and $s=\Phi_{-\al,\al}(p,q)$ in $\cC_0$. Then we have morphisms
\begin{align*}
\Phi_{0,\al}\vert_{\cC_0\t\{p\}}&:\cC_0\cong \cC_0\t\{p\}\longra \cC_\al,\\
\Phi_{\al,-\al}\vert_{\cC_\al\t\{q\}}&:\cC_\al\cong \cC_\al\t\{q\}\longra \cC_0.
\end{align*}
By \eq{or2eq19}, we have homotopies
\begin{align*}
\Phi_{\al,-\al}\vert_{\cC_\al\t\{q\}}\ci\Phi_{0,\al}\vert_{\cC_0\t\{p\}}&\simeq\Phi_{0,0}\vert_{\cC_0\t\{r\}}:\cC_0\longra \cC_0,\\
\Phi_{0,\al}\vert_{\cC_0\t\{p\}}\ci\Phi_{\al,-\al}\vert_{\cC_\al\t\{q\}}&\simeq\Phi_{\al,0}\vert_{\cC_\al\t\{s\}}:\cC_\al\longra \cC_\al.
\end{align*}
As $B\U\t\Z$ is an H-space in the sense of May \cite[\S 22.2]{May2}, $\cC=\Map_{C^0}(X,B\U\t\Z)$ is also an H-space, so it has a homotopy identity, which may be any point in $\cC_0$, such as $r,s$. Thus $\Phi\vert_{\cC\t\{r\}}\simeq\id_\cC\simeq\Phi\vert_{\cC\t\{s\}}$, so $\Phi_{0,0}\vert_{\cC_0\t\{r\}}\simeq\id_{\cC_0}$ and $\Phi_{\al,0}\vert_{\cC_\al\t\{s\}}\simeq\id_{\cC_\al}$. Therefore $\Phi_{0,\al}\vert_{\cC_0\t\{p\}}$ and $\Phi_{\al,-\al}\vert_{\cC_\al\t\{q\}}$ are homotopy inverses, and $\cC_\al$ is homotopy-equivalent to~$\cC_0$.

For (b), if $\om_\al$ is an orientation for $\cC_\al$ then $\phi_{0,\al}^{-1}\vert_{\cC_0\t\{p\}}(\om_\al)$ from \eq{or2eq24} is an orientation for $O_0^{E_\bu}\ot_{\Z_2}\bigl(O_\al^{E_\bu}\vert_p\bigr)$, so choosing an identification $O_\al^{E_\bu}\vert_p\cong\Z_2$ gives an orientation for $\cC_0$. Thus, if $\cC_\al$ is orientable, then $\cC_0$ is orientable. Similarly, if $\cC_0$ is orientable then $\cC_\al$ is orientable. Part (c) follows from~\eq{or2eq14}. 
\end{proof}

Combining Proposition \ref{or2prop3}(c) with Lemma \ref{or2lem1} yields a criterion for orientability of the $\cC_\al,\B_P$, as in Problem~\ref{or1prob}(a):

\begin{cor} In the situation of Definition\/ {\rm\ref{or2def3},} if\/ $K^1(X)\ot_\Z\Z_2=0$ then $\cC_\al$ is orientable for all\/ $\al\in K^0(X),$ and hence by\/ {\rm\eq{or2eq22},} $\B_P$ is orientable for all principal\/ $\U(m)$-bundles $P\ra X$. By the Atiyah--Hirzebruch spectral sequence, a sufficient condition for $K^1(X)\ot_\Z\Z_2=0$ is that\/~$H^{\rm odd}(X,\Z_2)=0$.

\label{or2cor1}	
\end{cor}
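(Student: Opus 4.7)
My plan is straightforward and proceeds in three short steps, using Proposition~\ref{or2prop3} as the main input.

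First, I would translate the question of orientability of $\cC_\al$ into a statement about its fundamental group. Since $\cC_\al$ is path-connected (being a connected component of $\cC$), a principal $\Z_2$-bundle on $\cC_\al$ is trivializable if and only if its first Stiefel--Whitney class vanishes in $H^1(\cC_\al,\Z_2)$. By the universal coefficient theorem and the Hurewicz isomorphism,
\begin{equation*}
H^1(\cC_\al,\Z_2)\cong\Hom\bigl(H_1(\cC_\al,\Z),\Z_2\bigr)\cong\Hom\bigl(\pi_1(\cC_\al),\Z_2\bigr).
\end{equation*}
By Proposition~\ref{or2prop3}(c), $\pi_1(\cC_\al)\cong K^1(X)$. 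Now any group homomorphism $K^1(X)\to\Z_2$ factors through $K^1(X)\ot_\Z\Z_2$, so the hypothesis $K^1(X)\ot_\Z\Z_2=0$ forces $\Hom(K^1(X),\Z_2)=0$. Hence $H^1(\cC_\al,\Z_2)=0$, so $O_\al^{E_\bu}$ is trivializable and $\cC_\al$ is orientable. Given a principal $\U(m)$-bundle $P\ra X$ with $[P]=\al$, pulling back a trivialization of $O_\al^{E_\bu}$ via the isomorphism $\si_P^\cC$ of \eq{or2eq22} produces a trivialization of $(\pi^\cla)^*(O_P^{E_\bu})$; since $\pi^\cla:\B_P^\cla\ra\B_P$ is a weak equivalence and principal $\Z_2$-bundles are classified by $\pi_0$ and $\pi_1$, this descends to a trivialization of $O_P^{E_\bu}$ on $\B_P$, so $\B_P$ is orientable.

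For the final sentence, I would argue as follows. Consider the mod-$2$ Atiyah--Hirzebruch spectral sequence
\begin{equation*}
E_2^{p,q}=H^p(X,K^q(*;\Z_2))\Longrightarrow K^{p+q}(X;\Z_2),
\end{equation*}
where $K^q(*;\Z_2)=\Z_2$ for $q$ even and $0$ for $q$ odd. The hypothesis $H^{\rm odd}(X,\Z_2)=0$ makes all odd rows of the $E_2$-page vanish, so all differentials vanish by parity; hence the spectral sequence degenerates with $E_\iy^{p,q}=0$ for $p+q$ odd, giving $K^1(X;\Z_2)=0$. The universal coefficient short exact sequence for K-theory
\begin{equation*}
0\longra K^1(X)\ot_\Z\Z_2\longra K^1(X;\Z_2)\longra \mathop{\rm Tor}(K^2(X),\Z_2)\longra 0
\end{equation*}
then forces $K^1(X)\ot_\Z\Z_2=0$, completing the proof.

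The argument is essentially formal; the only step requiring any care is the identification $H^1(\cC_\al,\Z_2)\cong\Hom(\pi_1(\cC_\al),\Z_2)$, which is valid because $\cC_\al$ is a mapping space and so has the homotopy type of a CW-complex, making universal coefficients and Hurewicz applicable. I do not anticipate any genuine obstacle.
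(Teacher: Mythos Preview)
Your proposal is correct and follows exactly the route the paper indicates: combine Proposition~\ref{or2prop3}(c) with (the analogue of) Lemma~\ref{or2lem1}, then invoke the Atiyah--Hirzebruch spectral sequence for the final claim. The paper gives no further detail, so your write-up is simply a careful expansion of that one-line justification; the only slip is terminological --- in your AHSS paragraph you write ``odd rows'' where you mean ``odd columns'' (odd $p$), since the odd-$q$ rows already vanish because $K^{\rm odd}(*;\Z_2)=0$.
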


A version of this corollary is used by Cao and Leung \cite[\S 10.4]{CaLe1}, \cite[Th.~2.1]{CaLe2}.

\subsubsection{An index-theoretic expression for obstruction to orientability}
\label{or244}

In Definition \ref{or2def3}, there is a group morphism $\pi_1(\cC_0)\ra\{\pm 1\}$ mapping $[\ga]$ in $\pi_1(\cC_0)$ to the monodromy of the principal $\Z_2$-bundle $O_0^{E_\bu}\ra\cC_0$ round the loop $\ga$, and $\cC_0$ is orientable if and only if this morphism is the constant map 1. So by Proposition \ref{or2prop3}(b),(c), this gives a natural morphism $\Th:K^1(X)\ra\{\pm 1\}$, such that $\cC_\al$ is orientable for all $\al\in K^0(X)$ if and only if~$\Th\equiv 1$.

By a calculation in index theory following Atiyah and Singer \cite{AtSi1,AtSe,AtSi3,AtSi4,AtSi5}, using the Atiyah--Singer Index Theorem for families \cite{AtSi4} over a base $\cS^1$, one can show that $\Th$ is given by the commutative diagram:
\begin{equation*}
\xymatrix@C=90pt@R=15pt{
*+[r]{K^1(X)} \ar[d]^\Th \ar@{=}[r] & K^{-1}(X) \ar[r]_(0.45){\Ad^{-1}} & *+[l]{KO^{-1}(X)} \ar[d]_{\be\mapsto \pi^*(\be)\cup \si(E_\bu)}
\\
*+[r]{\{\pm 1\}} & KO^{-1}(*) \ar[l]_\cong & *+[l]{KO^{-1}_{\rm cs} (TX).\!} \ar[l]_(0.55){\tind^{-1}}
}	
\end{equation*}

Here $\Ad^i:K^i(X)\ra KO^i(X)$ is a natural quadratic map which when $i=0$ maps $\Ad^0:\lb P\rb\mapsto \lb\Ad(P)\rb$ for any principal $\U(m)$-bundle $P\ra X$. Also $KO^i_{\rm cs}(TX)$ is the compactly-supported real K-theory of the tangent bundle $TX$, and $\si(E_\bu)\in KO^0_{\rm cs}(TX)$ is defined using the symbol of $E_\bu$, and $\pi^*:KO^{-1}(X)\ra KO^{-1}(TX)$ is pullback by $\pi:TX\ra X$, and $\cup:KO^{-1}(TX)\t KO^0_{\rm cs}(X)\ra KO^{-1}_{\rm cs}(X)$ is the cup product, and $\tind^i:KO^i_{\rm cs}(TX)\ra KO^i(*)$ is the {\it topological index\/} morphism of Atiyah and Singer~\cite[\S 3]{AtSi1}.

\subsection{Comparing orientations under direct sums}
\label{or25}

We define the {\it orientation group\/} of $X,E_\bu$:

\begin{dfn} In the situation of Definition \ref{or2def3}, suppose that $\cC_0$ is orientable, so that $\cC_\al$ is orientable for all $\al\in K^0(X)$ by Proposition \ref{or2prop3}(b). Thus, each $\cC_\al$ has two possible orientations, as it is connected. 

There is a natural orientation $\bar\om_0$ on $\cC_0$, defined as follows. Let $P=X\t\U(0)\ra X$ be the trivial $\U(0)$-bundle with $\lb P\rb=0$ in $K^0(X)$. Then $\B_P$ in Definition \ref{or1def1} is a point, and $O_P^{E_\bu}=\{1,-1\}$ is naturally trivial. We fix $\bar\om_0$ by requiring that $(\si_P^\cC)_*(1)=(\Si_P^\cC)^*(\bar\om_0)$, for $\si_P^\cC$ as in \eq{or2eq22}. Equivalently, $\bar\om_0$ is characterized by $(0,\bar\om_0)\star(0,\bar\om_0)=(0,\bar\om_0)$, using the multiplication $\star$ below.

Define the {\it orientation group\/} $\Om(X)$, initially just as a set, by
\begin{equation*}
\Om(X)=\bigl\{(\al,\om_\al):\text{$\al\in K^0(X)$, $\om_\al$ is an orientation on $\cC_\al$}\bigr\}.
\end{equation*}
Define a map $\pi:\Om(X)\ra K^0(X)$ by $\pi:(\al,\om_\al)\mapsto\al$. Define an action $\cdot:\{\pm 1\}\t\Om(X)\ra\Om(X)$ by $\ep\cdot(\al,\om_\al)=(\al,\ep\cdot\om_\al)$ for $\ep=\pm 1$. Then $\pi,\cdot$ make $\Om(X)$ into a principal $\Z_2$-bundle over~$K^0(X)$.

Define a multiplication $\star:\Om(X)\t\Om(X)\ra\Om(X)$ by
\begin{gather*}
(\al,\om_\al)\star (\be,\om_\be)=(\al+\be,\om_{\al+\be}), \quad\text{where $\om_{\al+\be}$ is uniquely}\\
\text{determined by}\quad \phi_{\al,\be}(\om_\al\bt\om_\be)=\Phi_{\al,\be}^*(\om_{\al+\be}),
\end{gather*}
using the notation of Definitions \ref{or2def2}--\ref{or2def3} and Remark \ref{or2rem6}(b). Equation \eq{or2eq27} implies that $\star$ is associative. From the definition of $\bar\om_0$ we see that $(0,\bar\om_0)\star(\al,\om_\al)=(\al,\om_\al)\star(0,\bar\om_0)=(\al,\om_\al)$, so $(0,\bar\om_0)$ is the identity in $\Om(X)$. For any $(\al,\om_\al)$ in $\Om(X)$, we can easily show that some $\om_{-\al}$, one of the two possible orientations on $\cC_{-\al}$, satisfies $(-\al,\om_{-\al})\star(\al,\om_\al)=(\al,\om_\al)\star(-\al,\om_{-\al})=(0,\bar\om_0)$, so inverses exist in $\Om(X)$. Thus $\Om(X)$ is a group, which depends on $X,E_\bu$. The multiplication in $\Om(X)$ compares orientations on $\cC_\al,\cC_\be,\cC_{\al+\be}$ under the direct sum morphisms $\Phi_{\al,\be},\phi_{\al,\be}$ of~\S\ref{or242}.

Clearly the map $\pi:\Om(X)\ra K^0(X)$ is a surjective group morphism, with kernel $\{(0,\bar\om_0),(0,-\bar\om_0)\}\cong\{\pm 1\}$, so we have an exact sequence of groups
\e
\xymatrix@C=35pt{ 0 \ar[r] & \{1,-1\} \ar[r] & \Om(X) \ar[r]^\pi & K^0(X) \ar[r] & 0. }
\label{or2eq30}
\e
Equations \eq{or2eq26} and \eq{or2eq29} imply that for all $(\al,\om_\al),(\be,\om_\be)$ in $\Om(X)$ we have
\e
(\be,\om_\be)\star(\al,\om_\al)=(-1)^{\chi^{E_\bu}(\al,\be)+\chi^{E_\bu}(\al,\al)\chi^{E_\bu}(\be,\be)}\cdot (\al,\om_\al)\star(\be,\om_\be).
\label{or2eq31}
\e
So in general $\Om(X)$ may not be abelian.
\label{or2def4}	
\end{dfn}

The orientation group $\Om(X)$ is closely related to the problem of choosing canonical orientations on mapping spaces $\cC_\al$ for all $\al\in K^0(X)$, and hence canonical orientations on $\B_P$ for all principal $\U(m)$-bundles $P\ra X$ as in Definition \ref{or2def3}, with relations between these canonical orientations under direct sums, as in Problem~\ref{or1prob}(c).

Observe that choosing an orientation $\ti\om_\al$ on $\cC_\al$ for all $\al\in K^0(X)$ is equivalent to choosing a bijection $\La:\Om(X)\,{\buildrel\cong\over\longra}\, K^0(X)\t\{\pm 1\}$ compatible with \eq{or2eq30}. Then there are $\ti\ep_{\al,\be}\in\{\pm 1\}$ such that $(\al,\ti\om_\al)\star(\be,\ti\om_\be)=\ti\ep_{\al,\be}\cdot(\al+\be,\ti\om_{\al+\be})$, which encode the multiplication $\star$ on $\Om(X)$, and the signs $\ep_{P_1,P_2}$ in Problem \ref{or1prob}(c) are $\ep_{P_1,P_2}=\ti\ep_{\lb P_1\rb,\lb P_2\rb}$. So, Problem \ref{or1prob}(c) is really about understanding the group $\Om(X)$ and writing the multiplication $\star$ in an explicit form under a suitable trivialization $\La$ of the principal $\Z_2$-bundle~$\pi:\Om(X)\ra K^0(X)$.

The next theorem is just an exercise in group theory: it classifies groups $\Om(X)$ in an exact sequence \eq{or2eq30} satisfying \eq{or2eq31}, and uses no further properties of $\Om(X)$. It shows that $\Om(X)$ depends up to isomorphism only on the finitely generated abelian group $K^0(X)$, the Euler form $\chi^{E_\bu}:K^0(X)\t K^0(X)\ra\Z$, and a certain group morphism $\Xi:G\ra\{\pm 1\}$, where $G=\{\ga\in K^0(X):2\ga=0\}$ is the 2-torsion subgroup of $K^0(X)$. It provides explicit signs $\ti\ep_{\al,\be}$ as above, which are the signs $(-1)^{\sum_{1\le h<i\le 1}(\chi_{hi}^{E_\bu}+\chi_{hh}^{E_\bu}\chi_{ii}^{E_\bu})a'_ha_i}\cdot \Xi(\ga)$ in the third line of \eq{or2eq35}. Parts (b),(c) are at least a partial solution of Problem~\ref{or1prob}(c).

\begin{thm} 
\label{or2thm2}	
Let\/ $X$ be a compact\/ $n$-manifold and\/ $E_\bu$ an elliptic operator on $X,$ and use the notation of\/ {\rm\S\ref{or242}}. Suppose\/ $\cC_0$ is orientable, so Definition {\rm\ref{or2def4}} defines the orientation group $\Om(X),$ and a natural orientation $\bar\om_0$ for $\cC_0$. Since $K^0(X)$ is a finitely generated abelian group, we may choose an isomorphism
\e
K^0(X)\cong\Z^r\t\ts\prod_{j\in J}\Z_{2^{p_j}}\t\prod_{k\in K}\Z_{q_k},
\label{or2eq32}
\e
where $J,K$ are finite indexing sets, and\/ $p_j>0,$ $q_k>1$ for $j\in J,$ $k\in K$ with\/ $q_k$ odd. Under the isomorphism \eq{or2eq32} we write elements of\/ $K^0(X)$ as $\bigl((a_1,\ldots,a_r),(b_j)_{j\in J},(c_k)_{k\in K}\bigr)$ for $a_i\in\Z,$ $b_j\in\Z_{2^{p_j}},$ $c_k\in\Z_{q_k}$. We may write
\begin{align*}
&\chi^{E_\bu}\bigl[\bigl((a_1,\ldots,a_r),(b_j)_{j\in J},(c_k)_{k\in K}\bigr),
\bigl((a'_1,\ldots,a'_r),(b'_j)_{j\in J},(c'_k)_{k\in K}\bigr)\bigr]\\
&\qquad\ts =\sum_{h,i=1}^r\chi_{hi}^{E_\bu}\,a_ha'_i,
\end{align*}
where $\chi_{hi}^{E_\bu}\in\Z$ with\/ $\chi_{ih}^{E_\bu}=\chi_{hi}^{E_\bu}$. Write\/ $G=\{\ga\in K^0(X):2\ga=0\}$ for the $2$-torsion subgroup of\/ $K^0(X),$ so that in the representation \eq{or2eq32} we have
\e
G=\bigl\{\bigl((0,\ldots,0),(b_j)_{j\in J},(0)_{k\in K}\bigr):\text{$b_j=0+2^{p_j}\Z$ or\/ $2^{p_j-1}+2^{p_j}\Z$}\bigr\}.
\label{or2eq33}
\e
Then:
\begin{itemize}
\setlength{\itemsep}{0pt}
\setlength{\parsep}{0pt}
\item[{\bf(a)}] There is a unique group morphism $\Xi:G\ra\{1,-1\}$ depending on $X,E_\bu,$ such that if\/ $\ga\in G$ then for any orientation $\om_\ga$ on $\cC_\ga$ we have
\e
(\ga,\om_\ga)\star (\ga,\om_\ga)=\Xi(\ga)\cdot(0,\bar\om_0).
\label{or2eq34}
\e
\item[{\bf(b)}] There exists a bijection $\La:\Om(X)\,\smash{\buildrel\cong\over\longra}\,K^0(X)\t\{\pm 1\}$ such that using\/ $\La$ and\/ \eq{or2eq32} to identify\/ $\Om(X)$ with\/ $\Z^r\t\ts\prod_{j\in J}\Z_{2^{p_j}}\t\prod_{k\in K}\Z_{q_k}\t\{\pm 1\},$ the multiplication $\star$ in $\Om(X)$ is given explicitly by
\ea
&\bigl[\bigl((a_1,\ldots,a_r),(b_j)_{j\in J},(c_k)_{k\in K}\bigr),\ep\bigr]\star
\bigl[\bigl((a_1',\ldots,a_r'),(b_j')_{j\in J},(c_k')_{k\in K}\bigr),\ep'\bigr]
\nonumber\\
&=\bigl[\bigl((a_1+a_1',\ldots,a_r+a_r'),(b_j+b_j')_{j\in J},(c_k+c_k')_{k\in K}\bigr),
\nonumber\\
&\qquad (-1)^{\sum_{1\le h<i\le 1}(\chi_{hi}^{E_\bu}+\chi_{hh}^{E_\bu}\chi_{ii}^{E_\bu})a'_ha_i}\cdot \Xi(\ga)\cdot
\ep\ep'\bigr],
\label{or2eq35}
\ea
where $\ga\in G$ is constructed from $(b_j)_{j\in J},(b_j')_{j\in J}$ as follows: write $b_j=\bar b_j+2^{p_j}\Z,$ $b'_j=\bar b'_j+2^{p_j}\Z$ for $\bar b_j,\bar b_j'\in\{0,1,\ldots,2^{p_j}-1\}$. Then under \eq{or2eq33} we set\/ $\ga=\bigl((0,\ldots,0),(\ti b_j)_{j\in J},(0)_{k\in K}\bigr),$ where $\ti b_j=0+2^{p_j}\Z$ if\/ $\bar b_j+\bar b_j'<2^{p_j}$ and\/ $\ti b_j=2^{p_j-1}+2^{p_j}\Z$ if\/ $\bar b_j+\bar b_j'\ge 2^{p_j},$ for\/~$j\in J$.
\item[{\bf(c)}] Suppose $\La,\ti\La$ both satisfy {\bf(b)}. Then there exist unique signs $\eta_i,\ze_j\in\{\pm 1\}$ for $i=1,\ldots,r$ and\/ $j\in J$ such that for all\/ $[((a_1,\ldots,a_r),\ldots),\ep]$ we have
\e
\begin{split}
&\ti\La\ci\La^{-1}\bigl[\bigl((a_1,\ldots,a_r),(b_j)_{j\in J},(c_k)_{k\in K}\bigr),\ep\bigr]\\
&=\bigl[\bigl((a_1,\ldots,a_r),(b_j)_{j\in J},(c_k)_{k\in K}\bigr),
\ts\prod_{i=1}^r\eta_i^{a_i}\cdot\prod_{j\in J}\ze_j^{b_j}\cdot
\ep\bigr].
\end{split}
\label{or2eq36}
\e
Conversely, if\/ $\La$ satisfies {\bf(b)} and\/ $\eta_i,\ze_j\in\{\pm 1\}$ are given for all\/ $i,j,$ and we define $\ti\La:\Om(X)\,{\buildrel\cong\over\longra}\,K^0(X)\t\{\pm 1\}$ by {\rm\eq{or2eq36},} then $\ti\La$ satisfies\/~{\bf(b)}.
\end{itemize}
\end{thm}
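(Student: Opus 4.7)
\medskip

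\noindent\textbf{Proof plan.} The strategy is to extract all the group-theoretic content from the two structural relations: the exact sequence \eqref{or2eq30} and the commutation law \eqref{or2eq31}. A key preliminary is that $\chi^{E_\bu}$ vanishes on torsion: if $\gamma\in K^0(X)$ has order $n$, then $n\chi^{E_\bu}(\gamma,\alpha)=\chi^{E_\bu}(n\gamma,\alpha)=0$ in $\Z$, hence $\chi^{E_\bu}(\gamma,\alpha)=0$. Consequently, the commutativity sign in \eqref{or2eq31} is trivial whenever at least one argument is torsion, so the preimage $\pi^{-1}(G)\subset\Om(X)$ is an abelian subgroup.

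For part (a), given $\gamma\in G$, any two orientations $\om_\ga,\om_\ga'$ on $\cC_\ga$ differ by $\ep=\pm1$, so $(\ga,\ep\om_\ga)\star(\ga,\ep\om_\ga)=\ep^2(\ga,\om_\ga)\star(\ga,\om_\ga)$, showing that $\Xi(\ga)$ defined by \eqref{or2eq34} is independent of choices. For multiplicativity, let $\ga_1,\ga_2\in G$ and pick any orientations $\om_1,\om_2,\om_{12}$ with $(\ga_1,\om_1)\star(\ga_2,\om_2)=(\ga_1+\ga_2,\om_{12})$; using that $\pi^{-1}(G)$ is abelian, a direct expansion of $(\ga_1+\ga_2,\om_{12})^{\star 2}$ gives $\Xi(\ga_1)\Xi(\ga_2)\cdot(0,\bar\om_0)$, establishing $\Xi(\ga_1+\ga_2)=\Xi(\ga_1)\Xi(\ga_2)$.

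For part (b), fix generators $e_i$, $f_j$, $g_k$ of $K^0(X)$ corresponding to the decomposition \eqref{or2eq32}, and choose orientations $\om^*_{e_i}$, $\om^*_{f_j}$, $\om^*_{g_k}$ on the corresponding components. For $g_k$ of odd order $q_k$, we may adjust the sign of $\om^*_{g_k}$ so that $(g_k,\om^*_{g_k})^{\star q_k}=(0,\bar\om_0)$, because replacing $\om^*_{g_k}$ by $-\om^*_{g_k}$ multiplies the $q_k$-fold product by $(-1)^{q_k}=-1$; this fails for $f_j$ since $2^{p_j}$ is even. Define $\La^{-1}(\al,\ep)$ as $\ep$ times the product in the fixed order $(a_1e_1,\om_{a_1e_1})\star\cdots\star(a_re_r,\om_{a_re_r})\star\prod_j(b_jf_j,\om_{b_jf_j})\star\prod_k(c_kg_k,\om_{c_kg_k})$, where $(nx,\om_{nx})$ denotes the $n$-th $\star$-power of $(x,\om^*_x)$ (using inverses for negative $n$, and representatives in $\{0,\ldots,\text{order}-1\}$ for torsion). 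To verify \eqref{or2eq35}, compute $\La^{-1}(\al,1)\star\La^{-1}(\al',1)$ by moving the primed factors leftwards into pairs with their unprimed partners. Vanishing of $\chi^{E_\bu}$ on torsion kills all commutativity signs except for swaps of pairs $(a'_he_h,\om)$ past $(a_ie_i,\om)$ with $h<i$, which by \eqref{or2eq31} each contribute $(-1)^{a'_ha_i\chi^{E_\bu}_{ih}+(a'_h)^2a_i^2\chi^{E_\bu}_{hh}\chi^{E_\bu}_{ii}}$; using $n^2\equiv n\pmod 2$ this reduces to the free-part exponent in \eqref{or2eq35}. Then combining paired powers: the free and odd-torsion pairings combine without sign (the latter by our choice of $\om^*_{g_k}$), while each $2$-power torsion pairing $(b_jf_j,\om)\star(b'_jf_j,\om)$ with wraparound $\bar b_j+\bar b'_j\ge 2^{p_j}$ differs from the normalized representative by the factor $(f_j,\om^*_{f_j})^{\star 2^{p_j}}=\Xi(2^{p_j-1}f_j)\cdot(0,\bar\om_0)$, which is a square. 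Multiplicativity of $\Xi$ assembles these into $\Xi(\ga)$.

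For part (c), any two such bijections satisfy $\ti\La\ci\La^{-1}(\al,\ep)=(\al,\eta(\al)\ep)$ for a function $\eta:K^0(X)\ra\{\pm1\}$. Comparing \eqref{or2eq35} applied to $\La$ and $\ti\La$ yields $\eta(\al+\be)=\eta(\al)\eta(\be)$, so $\eta$ is a group homomorphism. Since $\Hom(\Z_{q_k},\{\pm1\})=\{1\}$ for $q_k$ odd while $\Hom(\Z,\{\pm1\})$ and $\Hom(\Z_{2^{p_j}},\{\pm1\})$ are both $\{\pm1\}$, such $\eta$ are classified precisely by arbitrary signs $\eta_i=\eta(e_i)$ and $\ze_j=\eta(f_j)$, which yields \eqref{or2eq36}. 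The converse is immediate by reversing the argument. The main obstacle is the sign bookkeeping in part (b): keeping track of how the commutativity signs from \eqref{or2eq31}, the wraparound $\Xi$-signs for $2$-power torsion, and the reduction $n^2\equiv n\pmod 2$ conspire to produce exactly the exponent appearing in \eqref{or2eq35}.
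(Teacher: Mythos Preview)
Your proposal is correct and follows essentially the same approach as the paper: both prove (a) by noting $\chi^{E_\bu}$ vanishes on torsion so $\pi^{-1}(G)$ is abelian, prove (b) by choosing orientations on generators (normalizing the odd-torsion ones so their $q_k$-th power is the identity) and defining $\La^{-1}$ as an ordered product, then verifying \eqref{or2eq35} by commuting factors and tracking wraparound signs via $\Xi$. Your presentation of (c) via classifying homomorphisms $K^0(X)\to\{\pm1\}$ is slightly slicker than the paper's more concrete argument (which retraces the construction with modified generator orientations), but the content is identical.
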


\begin{proof} The first part of the theorem is immediate. For (a), if $\ga\in G$ and $\om_\ga$ is an orientation on $\cC_\ga$ then there is a unique $\Xi(\ga)=\pm 1$ satisfying \eq{or2eq34}, so as $(\ga,\om_\ga)\star (\ga,\om_\ga)=(\ga,-\om_\ga)\star(\ga,-\om_\ga)$, the map $\Xi:G\ra\{\pm 1\}$ is well defined. To see that $\Xi$ is a group morphism, note that if $\ga,\ga'\in G$ with $\ga''=\ga+\ga'$ and $\om_\ga,\om_{\ga'}$ are orientations on $\cC_\ga,\cC_{\ga'}$ with $(\ga,\om_\ga)\star(\ga',\om_{\ga'})=(\ga'',\om_{\ga''})$, then
\begin{align*}
&\Xi(\ga\!+\!\ga')\cdot(0,\bar\om_0)\!=\!(\ga'',\om_{\ga''})\star(\ga'',\om_{\ga''})\!=\!
(\ga,\om_\ga)\star(\ga',\om_{\ga'})\star(\ga,\om_\ga)\star(\ga',\om_{\ga'})\\
&=(\ga,\om_\ga)\star(\ga,\om_\ga)\star(\ga',\om_{\ga'})\star(\ga',\om_{\ga'})
=\bigl[\Xi(\ga)\cdot(0,\bar\om_0)\bigr]\star\bigl[\Xi(\ga')\cdot(0,\bar\om_0)\bigr]\\
&=\Xi(\ga)\Xi(\ga')\cdot(0,\bar\om_0),
\end{align*}
using \eq{or2eq34} in the fourth step and \eq{or2eq31} with $\chi^{E_\bu}\vert_{G\t G}=0$ in the third. Hence $\Xi(\ga+\ga')=\Xi(\ga)\Xi(\ga')$, and $\Xi$ is a morphism, proving~(a).

For (b), define elements $\la_i,\mu_j,\nu_k$ in $K^0(X)$ for $i=1,\ldots,r$, $j\in J$, $k\in K$ which are identified by \eq{or2eq32} with elements which have $a_k=1$, and $b_j=1$, and $c_k=1$ respectively, and all other entries zero, so that \eq{or2eq32} identifies $\bigl((a_1,\ldots,a_r),(b_j)_{j\in J},(c_k)_{k\in K}\bigr)$ with $\sum_ia_i\la_i+\sum_jb_j\mu_j+\sum_kc_k\nu_k$ in $K^0(X)$.

For all $i=1,\ldots,r$ choose an arbitrary orientation $\ti\om_{\la_i}$ for $\cC_{\la_i}$. For all $j\in J$ choose an arbitrary orientation $\ti\om_{\mu_j}$ for $\cC_{\mu_j}$. For each $k\in K$, let $\ti\om_{\nu_k}$ be the unique orientation for $\cC_{\nu_k}$ satisfying
\e
(\nu_k,\ti\om_{\nu_k})^{q_k}={\buildrel
{\ulcorner\,\,\,\text{$q_k$ copies } \,\,\,\urcorner} \over
{\vphantom{h}\smash{(\nu_k,\ti\om_{\nu_k})\star\cdots\star (\nu_k,\ti\om_{\nu_k})}}}=(0,\bar\om_0).
\label{or2eq37}
\e
This is well defined as $q_k\nu_k=0$ in $K^0(X)$, and replacing $\ti\om_{\nu_k}$ by $-\ti\om_{\nu_k}$ multiplies the left hand side of \eq{or2eq37} by $(-1)^{q_k}=-1$, as $q_k$ is odd, so exactly one of the two orientations on $\cC_{\nu_k}$ satisfies~\eq{or2eq37}.

Let $\al\in K^0(X)$. Then $\al$ corresponds under \eq{or2eq32} to some $\bigl((a_1,\ldots,a_r),\ab (b_j)_{j\in J},\ab (c_k)_{k\in K}\bigr)$. Write $b_j=\bar b_j+2^{p_j}\Z$ for unique $\bar b_j=0,\ldots,2^{p_j}-1$ and $c_j=\bar c_j+q_j\Z$ for unique $\bar c_j=0,\ldots,q_j-1$. Define an orientation $\ti\om_\al$ on $\cC_\al$ by
\e
(\al,\ti\om_\al)=(\la_1,\ti\om_{\la_1})^{a_1}\star\cdots\star (\la_r,\ti\om_{\la_r})^{a_r}\star\prod_{j\in J}(\mu_j,\ti\om_{\mu_j})^{\bar b_j}\star\prod_{k\in K}(\nu_k,\ti\om_{\nu_k})^{\bar c_k}.
\label{or2eq38}
\e
Here we should be careful as $\Om(X)$ may not be abelian by \eq{or2eq31}, and we have not specified orderings of $J,K$. But in fact $\chi^{E_\bu}$ is zero on the torsion factors of $K^0(X)$, so the elements $(\mu_j,\ti\om_{\mu_j}),(\nu_k,\ti\om_{\nu_k})$ in \eq{or2eq38} lie in the centre of $\Om(X)$, and only the order of the factors $(\la_1,\ti\om_{\la_1})^{a_1},\ldots,(\la_r,\ti\om_{\la_r})^{a_r}$ matters. Define $\La:\Om(X)\ra K^0(X)\t\{\pm 1\}$ in (b) by, for all $\al\in K^0(X)$ and~$\ep=\pm 1$
\e
\La:(\al,\ep\cdot\ti\om_\al)\longmapsto	\bigl[\bigl((a_1,\ldots,a_r),(b_j)_{j\in J},(c_k)_{k\in K}\bigr),\ep\bigr].
\label{or2eq39}
\e

Equation \eq{or2eq35} now follows from \eq{or2eq38}--\eq{or2eq39}, the fact that $(\mu_j,\ti\om_{\mu_j}),\ab(\nu_k,\ti\om_{\nu_k})$ lie in the centre of $\Om(X)$, and the next three equations
\ea
&\bigl[(\la_1,\ti\om_{\la_1})^{a_1}\star\cdots\star (\la_r,\ti\om_{\la_r})^{a_r}\bigr]\star\bigl[(\la_1,\ti\om_{\la_1})^{a'_1}\star\cdots\star (\la_r,\ti\om_{\la_r})^{a'_r}\bigr]
\label{or2eq40}\\
&=(-1)^{\sum_{1\le h<i\le 1}(\chi_{hi}^{E_\bu}+\chi_{hh}^{E_\bu}\chi_{ii}^{E_\bu})a'_ha_i}\cdot\bigl[(\la_1,\ti\om_{\la_1})^{a_1+a_1'}\star\cdots\star (\la_r,\ti\om_{\la_r})^{a_r+a_r'}\bigr],
\allowdisplaybreaks
\nonumber\\
\begin{split}
&\bigl[\ts\prod_{j\in J}(\mu_j,\ti\om_{\mu_j})^{\bar b_j}\bigr]\star\bigl[\ts\prod_{j\in J}(\mu_j,\ti\om_{\mu_j})^{\bar b'_j}\bigr]\\
&=\bigl[\ts\prod_{j\in J}(\mu_j,\ti\om_{\mu_j})^{\bar b_j+\bar b_j'-\ov {(b_j+b_j')}}\bigr]\star\bigl[\ts\prod_{j\in J}(\mu_j,\ti\om_{\mu_j})^{\ov {(b_j+b_j')}}\bigr]\\
&=(\ga,\ti\om_\ga)\star(\ga,\ti\om_\ga)\star\bigl[\ts\prod_{j\in J}(\mu_j,\ti\om_{\mu_j})^{\ov {(b_j+b_j')}}\bigr]\\
&=\Xi(\ga)\cdot\bigl[\ts\prod_{j\in J}(\mu_j,\ti\om_{\mu_j})^{\ov {(b_j+b_j')}}\bigr],
\end{split}
\label{or2eq41}
\allowdisplaybreaks\\
\begin{split}
&\bigl[\ts\prod_{k\in K}(\nu_k,\ti\om_{\nu_k})^{\bar c_k}\bigr]\star\bigl[\prod_{k\in K}(\nu_k,\ti\om_{\nu_k})^{\bar c'_k}\bigr]\\
&=\bigl[\ts\prod_{k\in K}(\nu_k,\ti\om_{\nu_k})^{\bar c_k+\bar c_k'-\ov{(c_k+c_k')}}\bigr]\star\bigl[\prod_{k\in K}(\nu_k,\ti\om_{\nu_k})^{\ov{(c_k+c_k')}}\bigr]\\
&=\bigl[\ts\prod_{k\in K}(\nu_k,\ti\om_{\nu_k})^{\ov{(c_k+c_k')}}\bigr],
\end{split}
\label{or2eq42}
\ea
where $\ga\in G$ in \eq{or2eq41} is defined as in (b). Here \eq{or2eq40} follows from \eq{or2eq31}. The first step of \eq{or2eq41} is immediate as the $(\mu_j,\ti\om_{\mu_j})$ commute in $\Om(X)$, the second holds as for each $j\in J$ either $\bar b_j+\bar b_j'-\ov {(b_j+b_j')}=0$ in which case $\ti b_j=0+2^{p_j}\Z$ in (b), or $\bar b_j+\bar b_j'-\ov {(b_j+b_j')}=2^{p_j}$ in which case $\ti b_j=2^{p_j-1}+2^{p_j}\Z$ in (b), and the third holds by \eq{or2eq34} as $(0,\bar\om_0)$ is the identity in $\Om(X)$. The first step of \eq{or2eq42} is immediate as the $(\nu_k,\ti\om_{\nu_k})$ commute in $\Om(X)$, and the second step holds by \eq{or2eq37} as $\bar c_k+\bar c_k'-\ov{(c_k+c_k')}=0$ or $q_k$ for each $k\in K$. This completes~(b). 

For (c), note that the only arbitrary choices we made in the proof of (b) were orientations $\ti\om_{\la_i}$ for $\cC_{\la_i}$ for $i=1,\ldots,r$ and $\ti\om_{\mu_j}$ for $\cC_{\mu_j}$ for all $j\in J$. Replacing $\ti\om_{\la_i}$ and $\ti\om_{\mu_j}$ by $\eta_i\cdot\ti\om_{\la_i}$ and $\ze_j\cdot\ti\om_{\mu_j}$ for all $i,j$ with $\eta_i,\ze_j\in\{\pm 1\}$ would yield an alternative bijection $\ti\La$ satisfying (b), where $\La,\ti\La$ are related by \eq{or2eq36}. This proves the last part of (c). 

For the first part, note that if $\ti\La$ satisfies (b) then we must have $\ti\La(\la_i,\ti\om_{\la_i})=(\la_i,\eta_i)$ for some $\eta_i=\pm 1$, all $i=1,\ldots,r$, and $\ti\La(\mu_j,\ti\om_{\mu_j})=(\mu_j,\ze_j)$ for some $\ze_j=\pm 1$, all $j\in J$. Using \eq{or2eq35} and \eq{or2eq37} we find that $\ti\La(\nu_k,\ti\om_{\nu_k})=(\nu_k,1)$ for all $k\in K$. Then for any $\al\in K^0(X)$, writing $(\al,\ti\om_\al)$ as in \eq{or2eq38}, we can use \eq{or2eq35} to determine $\ti\La(\al,\ti\om_\al)$ from $\ti\La(\la_i,\ti\om_{\la_i})=(\la_i,\eta_i)$, $\ti\La(\mu_j,\ti\om_{\mu_j})=(\mu_j,\ze_j)$ and $\ti\La(\nu_k,\ti\om_{\nu_k})=(\nu_k,1)$, and it must be the same as $\ti\La$ constructed above with $\eta_i\cdot\ti\om_{\la_i}$ and $\ze_j\cdot\ti\om_{\mu_j}$ in place of $\ti\om_{\la_i},\ti\om_{\mu_j}$, so \eq{or2eq36} holds.
\end{proof}

\begin{rem} The material of this section does not extend from unitary groups $\U(m)$ and mapping spaces $\cC_\al=\Map_{C^0}(X,B\U\t\Z)_\al$, to any of the families of Lie groups $\mathbin{\rm O}(m),\ab\SO(m),\ab\Spin(m)$ or $\Sp(m)$, and the corresponding mapping spaces $\Map_{C^0}(X,B{\rm O}\t\Z)_\al,\ldots.$ This is because Example \ref{or2ex4} does not extend to $\mathbin{\rm O}(m),\ldots,\Sp(m)$, as in Remark \ref{or2rem4}, so we have no way to compare orientations for these groups under direct sums.
\label{or2rem7}
\end{rem}

\section{Constructing orientations by excision}
\label{or3}

We now explain a method for orienting moduli spaces using `excision'. This was introduced by Donaldson \cite[\S II.4]{Dona1}, \cite[\S 3(b)]{Dona2}, \cite[\S 7.1.6]{DoKr} for moduli spaces of instantons on 4-manifolds.

\subsection{The Excision Theorem}
\label{or31}

The next theorem is proved by the last author \cite[Th.~2.13]{Upme}, based on Donaldson \cite[\S II.4]{Dona1}, \cite[\S 3(b)]{Dona2}, and~\cite[\S 7.1.6]{DoKr}.

\begin{thm}[Excision Theorem]
\label{or3thm1}
Suppose we are given the following data:
\begin{itemize}
\setlength{\itemsep}{0pt}
\setlength{\parsep}{0pt}
\item[{\bf(a)}] Compact\/ $n$-manifolds $X^+,X^-$.
\item[{\bf(b)}] Elliptic complexes $E_\bu^\pm$ on $X^\pm$.
\item[{\bf(c)}] A Lie group $G,$ and principal\/ $G$-bundles $P^\pm\ra X^\pm$ with connections\/~$\nabla_{P^\pm}$.
\item[{\bf(d)}] Open covers $X^+=U^+\cup V^+,$ $X^-=U^-\cup V^-$.
\item[{\bf(e)}] A diffeomorphism $\io:U^+\ra U^-,$ such that $E_\bu^+\vert_{U^+}$ and\/ $\io^*(E_\bu^-\vert_{U^-})$ are isomorphic elliptic complexes on $U^+$.
\item[{\bf(f)}] An isomorphism $\si:P^+\vert_{U^+}\ra \io^*(P^-\vert_{U^-})$ of principal\/ $G$-bundles over $U^+,$ which identifies $\nabla_{P^+}\vert_{U^+}$ with\/~$\io^*(\nabla_{P^-}\vert_{U^-})$.
\item[{\bf(g)}] Trivializations of principal\/ $G$-bundles\/ $\tau^\pm:P^\pm\vert_{V^\pm}\ra V^\pm\t G$ over\/ $V^\pm,$ which identify\/ $\nabla_{P^\pm}\vert_{V^\pm}$ with the trivial connections\/ $\nabla^0,$ and satisfy 
\begin{equation*}
\smash{\io\vert_{U^+\cap V^+}^*(\tau^-)\ci\si\vert_{U^+\cap V^+}=\tau^+\vert_{U^+\cap V^+}}.
\end{equation*}
\end{itemize}
Then we have a canonical identification of n-orientation\/ $\Z_2$-torsors from\/~{\rm\eq{or1eq4}:}
\e
\Om^{+-}:\check O_{P^+}^{E_\bu^+}\big\vert_{[\nabla_{P^+}]}\,{\buildrel\cong\over\longra}\,\check O_{P^-}^{E_\bu^-}\big\vert_{[\nabla_{P^-}]}.
\label{or3eq1}
\e

The isomorphisms \eq{or3eq1} are functorial in a very strong sense. For example:
\begin{itemize}
\setlength{\itemsep}{0pt}
\setlength{\parsep}{0pt}
\item[{\bf(i)}] If we vary any of the data in {\bf(a)}--{\bf(g)} continuously in a family over $t\in[0,1],$ then the isomorphisms $\Om^{+-}$ also vary continuously in $t\in[0,1]$. 
\item[{\bf(ii)}] The isomorphisms $\Om^{+-}$ are unchanged by shrinking the open sets $U^\pm,V^\pm$ such that\/ $X^\pm=U^\pm\cup V^\pm$ still hold, and restricting $\io,\si,\tau^\pm$.
\item[{\bf(iii)}] If we are also given a compact\/ $n$-manifold\/ $X^\t,$ elliptic complex $E_\bu^\t,$ bundle $P^\t\ra X^\t,$ connection $\nabla_{P^{\smash{\t}}},$ open cover $X^\t=U^\t\cup V^\t,$ diffeomorphism $\io':U^-\ra U^\t,$ and isomorphisms  $\si':P^-\vert_{U^-}\ra \io^{\prime*}(P^\t\vert_{U^\t}),$ $\tau^\t:P^\t\vert_{V^\t}\ra V^\t\t G$ satisfying the analogues of\/ {\bf(a)\rm--\bf(g)\rm,} then\/ $\Om^{+\t}=\Om^{-\t}\ci\Om^{+-},$ where $\Om^{+\t}$ is defined using $\io'\ci\io:U^+\ra U^\t$ and\/~$\io^*(\si')\ci\si:P^+\vert_{U^+}\ra (\io'\ci\io)^*(P^\t\vert_{U^\t})$.
\end{itemize}
\end{thm}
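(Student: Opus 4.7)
The strategy is a Mayer--Vietoris-style gluing argument for determinant lines, following Donaldson's original excision proof \cite[\S II.4]{Dona1}, \cite[\S 7.1.6]{DoKr}. The normalization by $O^{E_\bu}_{X\t G}\vert_{[\nabla^0]}$ in the definition \eq{or1eq4} of $\check O^{E_\bu}_P$ is precisely what makes the statement clean: it cancels the contribution from the ``trivial'' part $V^\pm$ of the manifold, so the n-orientation should depend only on the local data near $U^\pm$, which is identified by $\io$ and $\si$.

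First I would use the gauge trivializations $\tau^\pm$ on $V^\pm$ from (g) to replace $\nabla_{P^\pm}$ by gauge-equivalent connections agreeing with $\nabla^0$ on $V^\pm$; this does not alter the n-orientation torsors at $[\nabla_{P^\pm}]$. Next, using a cutoff function together with Remark \ref{or2rem2}(i) (that orientation bundles depend on the elliptic operator only up to continuous isotopy of elliptic operators with the same symbol), I would further homotope so that $\nabla_{P^\pm}$ equals $\nabla^0$ on an open neighbourhood of $X^\pm\sm U^\pm$, with the nontrivial twisting concentrated in a compact $K^\pm\subset U^\pm$. After this reduction, the data on the two sides ``match up'' after excision: there is an identification of the nontrivial twisting on $X^+$ (supported in $K^+$) with a corresponding family on $X^-$ transported via $\io$ and $\si$.

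The core step is then a family argument on the disjoint union $Y=X^+\sqcup X^-$. I would construct, via the gluing data $(\io,\si,\tau^\pm)$ and a cutoff, a continuous one-parameter family $\{(Q^t,\nabla^t)\}_{t\in[0,1]}$ of principal $G$-bundles with connection on $Y$, interpolating between $(P^+,\nabla_{P^+})\sqcup(X^-\t G,\nabla^0)$ at $t=0$ and $(X^+\t G,\nabla^0)\sqcup(P^-,\nabla_{P^-})$ at $t=1$. Since principal bundles do not glue as freely as vector bundles, it is cleaner to work directly with the associated adjoint bundles $\Ad(Q^t)$ relative to their common trivialization across the overlap, so that the twisted operator $D^{\nabla_{\Ad(Q^t)}}$ becomes a continuous family of elliptic operators over $[0,1]$. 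Parallel translation in its determinant line bundle from $t=0$ to $t=1$ then produces a canonical isomorphism
\e
\det(D^{\nabla_{\Ad(P^+)}})\ot\det(D^{\nabla^0_{\Ad(X^-\t G)}})\cong\det(D^{\nabla^0_{\Ad(X^+\t G)}})\ot\det(D^{\nabla_{\Ad(P^-)}}).
\label{or3eqplan1}
\e
Passing to orientations and rearranging using \eq{or1eq4} gives the desired isomorphism $\Om^{+-}$ of~\eq{or3eq1}.

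To show $\Om^{+-}$ is canonical, I would prove that the space of admissible auxiliary data (cutoffs, interpolating path, Step 1 homotopy) is contractible, so any two choices yield the same isomorphism via a further homotopy. Functoriality (i)--(iii) then follows: (i) by the continuous dependence of the whole construction on the input data; (ii) because shrinking $U^\pm,V^\pm$ can be absorbed into the cutoff function without altering the family; (iii) by concatenating the interpolating families on $X^+\sqcup X^-$ and $X^-\sqcup X^\t$ into a single family on $X^+\sqcup X^-\sqcup X^\t$, and comparing with the direct gluing on $X^+\sqcup X^\t$. The hardest part will be the construction of the interpolating family in a way that genuinely depends only on the excision data $(\io,\si,\tau^\pm)$ and not on the auxiliary analytic choices. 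The delicate point is that $P^+$ and $P^-$ live on different manifolds and are only \emph{partially} identified (on $U^+\cong U^-$, via $\si$), so they cannot be interpolated as principal bundles in any direct way; the transition to $\Ad$-bundles and the careful use of the common trivializations $\tau^\pm$ on $U^+\cap V^+\cong U^-\cap V^-$ is what makes the construction go through, and verifying independence of the many choices is the real technical content of the theorem.
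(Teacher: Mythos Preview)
Your overall strategy---compare $\det(D^{\nabla_{\Ad(P^+)}})\otimes\det(D^{\nabla^0})$ on $X^+$ with the analogous product on $X^-$---is correct, and your equation \eq{or3eqplan1} is exactly the identification one wants. But the mechanism you propose for producing it has a genuine gap.

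You claim to construct a continuous family $\{(Q^t,\nabla^t)\}_{t\in[0,1]}$ of bundles with connection on $Y=X^+\sqcup X^-$ interpolating between $\Ad(P^+)\sqcup(X^-\t\g)$ at $t=0$ and $(X^+\t\g)\sqcup\Ad(P^-)$ at $t=1$. Such a family cannot exist in general, even at the level of adjoint vector bundles: since $Y\t[0,1]$ retracts onto $Y\t\{0\}$, a bundle on $Y\t[0,1]$ is determined by its restriction to $t=0$, so the two endpoint bundles would have to be isomorphic on $Y$. They are not. Take $X^+=X^-=\cS^4$, $G=\SU(2)$, and $P^\pm$ the instanton bundle with $c_2=1$; then $\Ad(P^+)\sqcup(\cS^4\t\g)$ and $(\cS^4\t\g)\sqcup\Ad(P^-)$ have Pontryagin classes $(-4,0)$ and $(0,-4)$ in $H^4(\cS^4\sqcup\cS^4,\Z)=\Z\oplus\Z$, so they are not isomorphic. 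Your remark that ``principal bundles do not glue as freely as vector bundles'' correctly senses the difficulty, but passing to $\Ad$-bundles does not cure it: the obstruction is a characteristic class, not a failure of bundle-gluing. The gluing data $(\io,\si,\tau^\pm)$ identify $\Ad(P^+)\vert_{U^+}$ with $\Ad(P^-)\vert_{U^-}$, but $U^+$ and $U^-$ lie in \emph{different} components of $Y$, so this identification cannot be used to slide the non-triviality from one component to the other inside~$Y$.

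The paper avoids this obstacle by a different route. Instead of interpolating on the disjoint union, it works on each $X^\pm$ separately with the \emph{relative} operator $D^{\nabla_{\Ad(P^\pm)}}\oplus(D^{\nabla^0_{\Ad(X^\pm\t G)}})^*$, whose orientation torsor is exactly $\check O_{P^\pm}^{E_\bu^\pm}\vert_{[\nabla_{P^\pm}]}$. Using $\tau^\pm$, on $V^\pm$ this operator is isomorphic to $D^{\nabla^0}\oplus(D^{\nabla^0})^*$, which can be perturbed to be invertible. The key step is then to deform through elliptic \emph{pseudo}-differential operators---not differential operators---to an operator genuinely supported in $U^\pm$. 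Pseudo-differential operators can have compactly supported symbols, which is precisely what differential operators on a non-trivial bundle cannot achieve; this is what lets one ``excise'' $V^\pm$. After localization, the operators on $U^+$ and $U^-$ are identified directly by $\io$ and $\si$, giving \eq{or3eq1}. So the essential new ingredient you are missing is the passage to pseudo-differential operators (or, equivalently, an excision argument at the level of symbol classes in $K$-theory); without it the interpolation you describe simply does not exist.
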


\begin{proof}[Sketch proof] On $X^\pm$, consider the elliptic operator $D^{\nabla_{\smash{\Ad(P^\pm)}}}\op (D^{\nabla^0_{\smash{\Ad(X^\pm\t G)}}})^*$, where $D^{\nabla^0_{\smash{\Ad(X^\pm\t G)}}}$ is the twisted elliptic operator \eq{or1eq2} from the trivial bundle $X^\pm\t G\ra X^\pm$ with the trivial connection $\nabla^0$, and $(D^{\nabla^0_{\smash{\Ad(X^\pm\t G)}}})^*$ is its formal adjoint. This has determinant line
\begin{equation*}
\det\bigl(D^{\nabla_{\smash{\Ad(P^\pm)}}}\bigr)\ot \det\bigl(D^{\nabla^0_{\smash{\Ad(X^\pm\t G)}}}\bigr)^*,
\end{equation*}
and thus by \eq{or1eq4} has orientation $\Z_2$-torsor
\begin{equation*}
\check O_{P^\pm}^{E_\bu^\pm}\big\vert_{[\nabla_{P^\pm}]}=O_{P^\pm}^{E_\bu^\pm}\big\vert_{[\nabla_{P^\pm}]}\ot_{\Z_2}O_{X^\pm\t G}^{E_\bu^\pm}\big\vert_{[\nabla^0]},
\end{equation*}
as in the left and right hand sides of \eq{or3eq1}. Using the isomorphisms $\tau^\pm$ in (g), we may deform $D^{\nabla_{\smash{\Ad(P^\pm)}}}\op (D^{\nabla^0_{\smash{\Ad(X^\pm\t G)}}})^*$ continuously through elliptic {\it pseudo\/}-differential operators on $X^\pm$ to operators supported on $U^\pm$, and arrange that these operators on $U^+$ and $U^-$ are identified by $\io:U^+\ra U^-$. Since orientation torsors also work for elliptic pseudo-differential operators, and are unchanged under continuous deformations, the identification \eq{or3eq1} follows.
\end{proof}  

Here is a refinement of Problem \ref{or1prob} for Gauge Orientation Problems:

\begin{prob}
\label{or3prob}	
Suppose we are given a Gauge Orientation Problem as in Definition\/ {\rm\ref{or1def3}} and Example\/ {\rm\ref{or1ex1}}. Then for all compact\/ $n$-manifolds $X$ with geometric structure $\T$ of the prescribed kind, and all principal\/ $G$-bundles $P\ra X$ for $G\in\cL,$ we should construct a canonical n-orientation on $\B_P,$ such that:
\begin{itemize}
\setlength{\itemsep}{0pt}
\setlength{\parsep}{0pt}
\item[{\bf(i)}] The n-orientations are functorial under isomorphisms of\/ $(X,\T,P),$ and change continuously under continuous deformations of\/~$\T$.
\item[{\bf(ii)}] In the situation of Theorem\/ {\rm\ref{or3thm1},} if the diffeomorphism $\io:U^+\ra U^-$ identifies the geometric structures $\T^+\vert_{U^+}$ and\/ $\T^-\vert_{U^-},$ then\/ $\Om^{+-}$ in\/ \eq{or3eq1} identifies the canonical n-orientations on $\B_{P^+}$ and\/ $\B_{P^-}$ at\/~$[\nabla_{P^\pm}]$. 
\end{itemize}
\end{prob}

Here part (ii) is a strong condition: in some cases it may determine the canonical n-orientations more-or-less uniquely for all $(X,\T)$ and $P\ra X$, though in other cases it can be overdetermined, so no such canonical n-orientations exist.

We have two powerful methods for trivializing bundles $\check O_P^{E_\bu}\ra\B_P$: when the symbol of $E_\bu$ is complex linear as in Theorem \ref{or2thm1}, and excision, Theorem~\ref{or3thm1}. The next theorem relates these methods. In Theorems \ref{or4thm2}, \ref{or4thm4}, and \ref{or4thm8} below we will use the two methods in combination, in a way we believe is new.

\begin{thm}{\bf(a)} Suppose we are given data\/ $X^+=U^+\cup V^+,E_\bu^+,G,\ab P^+,\ab\nabla_{P^+},\ab\tau^+$ as in Theorem\/ {\rm\ref{or3thm1}(a)--(g),} and we are also given a complex structure on $E_\bu^+\vert_{U^+},$ as in Theorem\/ {\rm\ref{or2thm1}}. Then there is a natural trivialization of\/ \hbox{$\Z_2$-torsors,} depending on the choice of complex structure on $E_\bu^+\vert_{U^+}\!:$
\e
\smash{\check O_{P^+}^{E_\bu^+}\big\vert_{[\nabla_{P^+}]}\cong \Z_2.}
\label{or3eq2}
\e
These isomorphisms \eq{or3eq2} are strongly functorial as in Theorem\/ {\rm\ref{or3thm1}(i)--(iv)}.
\smallskip

\noindent{\bf(b)} In {\bf(a)\rm,} suppose the complex structure on $E_\bu^+\vert_{U^+}$ is the restriction of a complex structure on $E_\bu^+$. Then Theorem\/ {\rm\ref{or2thm1}} gives a natural trivialization of\/ $\check O_{P^+}^{E_\bu^+},$ and\/ \eq{or3eq2} agrees with this at\/~$[\nabla_{P^+}]$.
\smallskip

\noindent{\bf(c)} In the situation of Theorem\/ {\rm\ref{or3thm1},} suppose we have complex structures on $E_\bu^\pm\vert_{U^\pm}$ which are identified by the isomorphism between $E_\bu^+\vert_{U^+}$ and\/ $\io^*(E_\bu^-\vert_{U^-})$ in Theorem\/ {\rm\ref{or3thm1}(e)}. Then \eq{or3eq2} for $X^\pm$ induce trivializations of the left and right hand sides of\/ {\rm\eq{or3eq1},} and\/ \eq{or3eq1} identifies these trivializations.

\label{or3thm2}	
\end{thm}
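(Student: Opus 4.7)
The strategy is to combine the pseudo-differential deformation technique from the sketch of Theorem \ref{or3thm1} with the complex-linear-symbol trivialization of Theorem \ref{or2thm1}, both of which rest on the single principle that the orientation torsor of a continuous family of elliptic (pseudo-)differential operators is canonically preserved under continuous deformations. Everything will follow once we observe that a complex structure on $E_\bu^+\vert_{U^+}$ alone suffices to produce a trivialization after the operator has been deformed to be supported inside $U^+$.

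For part {\bf(a)}, I would proceed as in the sketch of Theorem \ref{or3thm1}. Using $\tau^+$, the twisted operator $D^{\nabla_{\Ad(P^+)}}\op(D^{\nabla^0_{\Ad(X^+\t G)}})^*$ is identified over $V^+$ with a direct sum $D^0\op (D^0)^*$ of two copies of the same operator and its adjoint. Via a partition of unity supported in $X^+=U^+\cup V^+$ and the standard `cancellation' trick, one then deforms this direct sum continuously through elliptic pseudo-differential operators to an operator $D^1$ whose symbol is supported inside $U^+$. Because $D^1$ depends only on $E_\bu^+\vert_{U^+}$ (tensored with the bundle built from $P^+\vert_{U^+}$), the complex structure on $E_\bu^+\vert_{U^+}$ equips its symbol with a further continuous deformation, through elliptic pseudo-differential operators supported in $U^+$, to one whose symbol is complex linear; this is the $U^+$-local analogue of the argument at the end of the proof of Theorem \ref{or2thm1}. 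The kernel and cokernel of the final operator are then finite-dimensional complex vector spaces, which orient the determinant line canonically. Composing with the identification of orientation torsors along the continuous deformations gives the trivialization \eq{or3eq2}. Independence of choices of partition of unity and of the intermediate deformation follows because the space of admissible choices is affine, and hence contractible; functoriality properties analogous to Theorem \ref{or3thm1}(i)--(iii) follow because every ingredient is natural under the operations listed there.

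For part {\bf(b)}, when the complex structure extends globally to $E_\bu^+$, both the Theorem \ref{or2thm1} trivialization and the trivialization from {\bf(a)} are produced from the same principle applied after different continuous deformations. The comparison is obtained by constructing a two-parameter family of elliptic pseudo-differential operators: one parameter deforms $D^{\nabla_{\Ad(P^+)}}\op(D^{\nabla^0_{\Ad(X^+\t G)}})^*$ to an operator supported in $U^+$ (as in {\bf(a)}), while the other parameter deforms its symbol to be complex linear (as in Theorem \ref{or2thm1}). Crucially, we arrange both deformations so that the symbol stays complex linear throughout the second parameter whenever the global complex structure is available, which is possible precisely because the extension of the complex structure to all of $E_\bu^+$ is hypothesized. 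Contractibility of the parameter square then shows that the two trivializations of $\check O_{P^+}^{E_\bu^+}\vert_{[\nabla_{P^+}]}$ obtained by traversing opposite sides of the square coincide. Part {\bf(c)} is immediate from the construction: in the pseudo-differential deformation of {\bf(a)} carried out simultaneously on $X^+$ and $X^-$, the diffeomorphism $\io$ and the isomorphism $\si$ identify the two deformed operators supported on $U^+$ and $U^-$, and by hypothesis $\io$ also identifies the complex structures used for the final complex-linear trivializations; thus the two trivializations \eq{or3eq2} are identified by $\Om^{+-}$.

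The main obstacle I expect is bookkeeping: ensuring that the intermediate pseudo-differential deformations used in {\bf(a)} can always be chosen in a way that is natural under the operations of Theorem \ref{or3thm1}(i)--(iii), and, for {\bf(b)}, that the two-parameter deformation argument really produces compatible trivializations rather than trivializations differing by a sign. Both difficulties are addressed uniformly by the contractibility of spaces of cutoff functions and of elliptic pseudo-differential operators with prescribed symbol class, which guarantees that all local choices can be patched consistently.
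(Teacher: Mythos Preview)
Your proposal is correct and follows essentially the same approach as the paper: for (a) you deform the operator $D^{\nabla_{\Ad(P^+)}}\op(D^{\nabla^0_{\Ad(X^+\t G)}})^*$ through elliptic pseudo-differential operators to one supported in $U^+$ and then use the local complex structure to make it $\C$-linear, exactly as the paper does, and your argument for (c) matches the paper's verbatim. For (b) the paper is marginally more direct---rather than invoking a two-parameter square, it simply chooses one specific path (first deform globally to a $\C$-linear operator as in Theorem~\ref{or2thm1}, then deform through $\C$-linear pseudo-differential operators to one supported in $U^+$), noting that the complex orientation persists throughout the second stage---but this is just a particular traversal of your square, so the content is the same.
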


\begin{proof} For (a), in the sketch proof of Theorem \ref{or3thm1}, we explained that $\check O_{P^+}^{E_\bu^+}\vert_{[\nabla_{P^+}]}$ is the orientation $\Z_2$-torsor of the elliptic operator $D^{\nabla_{\smash{\Ad(P^\pm)}}}\op (D^{\nabla^0_{\smash{\Ad(X^\pm\t G)}}})^*$ on $X^+$, and we can deform this continuously through elliptic pseudo-differential operators on $X^+$ to an operator supported on $U^+$. Using the complex structure on $E_\bu^+\vert_{U^+}$, we can make this operator supported on $U^+$ $\C$-linear. Then as in the proof of Theorem \ref{or2thm1}, we get a trivialization of $\check O_{P^+}^{E_\bu^+}\big\vert_{[\nabla_{P^+}]}$, inducing the isomorphism \eq{or3eq2}. It is functorial as in the proofs of Theorems \ref{or2thm1} and~\ref{or3thm1}.

For (b), given a complex structure on $E_\bu^+$, we can take the continuous deformation from  $D^{\nabla_{\smash{\Ad(P^\pm)}}}\op (D^{\nabla^0_{\smash{\Ad(X^\pm\t G)}}})^*$ to a complex linear operator supported on $U^+$ to be the composition of two continuous deformations: first we deform $D^{\nabla_{\smash{\Ad(P^+)}}}$ and $D^{\nabla^0_{\smash{\Ad(X^\pm\t G)}}}$ through elliptic differential operators to $\C$-linear operators as used to construct the trivializations of $O_{P^+}^{E_\bu^+},O_{X^+\t G}^{E_\bu^+}\vert_{[\nabla^0]}$ in the proof of Theorem \ref{or2thm1}. Secondly, we deform through $\C$-linear elliptic pseudo-differential operators on $X^+$ to a $\C$-linear operator supported on $U^+$. Throughout the second deformation we have canonical orientations by $\C$-linearity, and (b) follows.

For (c), the trivializations of the left and right hand sides of \eq{or3eq1} from part (a) come from identifying them with the orientation $\Z$-torsors of $\C$-linear elliptic pseudo-differential operators supported on $U^+$ and $U^-$, where the $\C$-linearity is built using the  complex structures on $E_\bu^\pm\vert_{U^\pm}$. The isomorphism \eq{or3eq1} was proved by identifying both sides with orientation $\Z_2$-torsors of elliptic pseudo-differential operators supported on $U^+$ and $U^-$, and identifying these elliptic pseudo-differential operators under $\io:U^+\ra U^-$ using the isomorphism $E_\bu^+\vert_{U^+}\cong\io^*(E_\bu^-\vert_{U^-})$. Since this isomorphism identifies the complex structures on $E_\bu^\pm\vert_{U^\pm}$, we can take the isomorphism of elliptic pseudo-differential operators under $\io$ to identify the $\C$-linear structures, so \eq{or3eq1} identifies the corresponding trivializations from~\eq{or3eq2}.
\end{proof}

\subsection{\texorpdfstring{Trivializing principal bundles outside codimension $d$}{Trivializing principal bundles outside codimension d}}
\label{or32}

\begin{rem}
\label{or3rem1}
{\bf(a)} Suppose that $G$ is a Lie group, and $d\ge 2$ with homotopy groups $\pi_i(G)=0$ for $i=0,\ldots,d-2$. Then for $k=0,\ldots,d-1$, any principal $G$-bundle $P\ra\cS^k$ is trivial, as these are classified by $\pi_{k-1}(G)$. It follows that if $Z$ is a manifold or CW-complex of dimension $\le d-1$ then any principal $G$-bundle $P\ra Z$ is trivial.
\smallskip

\noindent{\bf(b)} Here are some facts about homotopy groups $\pi_i(G)$ for Lie groups $G$, which can be found in Borel~\cite{Bore}:
\begin{itemize}
\setlength{\itemsep}{0pt}
\setlength{\parsep}{0pt}
\item[{\bf(i)}]  $\pi_0(G)=0$ if $G$ is connected.
\item[{\bf(ii)}] $\pi_1(G)$ is abelian, and $\pi_1(G)=0$ if $G$ is simply-connected.
\item[{\bf(iii)}] $\pi_2(G)=0$ for any Lie group $G$.
\item[{\bf(iv)}] $\pi_3(G)\cong\Z^k$, where $k$ is the number of simple Lie group factors of $G$.
\end{itemize}

\noindent{\bf(c)} Combining {\bf(a)\rm,\bf(b)}, we see that for a Lie group $G$:
\begin{itemize}
\setlength{\itemsep}{0pt}
\setlength{\parsep}{0pt}
\item[{\bf(i)}] If $G$ is connected then $\pi_i(G)=0$ for $i=0,\ldots,d-2$ with $d=2$.
\item[{\bf(ii)}] If $G$ is connected and simply-connected then $\pi_i(G)=0$ for $i=0,\ldots,d-2$ with $d=4$. This does not hold for $d>4$ unless $G$ is contractible.
\end{itemize}
\end{rem}

We use this to show that we can trivialize principal $G$-bundles $P\ra X$ outside the $(n-d)$-skeleton of $X$ for $d=2$ or~4.

\begin{prop} 
\label{or3prop1}
Let\/ $X$ be a compact\/ $n$-manifold, $G$ a Lie group, and\/ $P\ra X$ a principal\/ $G$-bundle. Suppose that either\/ {\bf(i)} $G$ is connected and set\/ $d=2,$ or\/ {\bf(ii)} $G$ is connected and simply-connected and set\/ $d=4$. Then:
\begin{itemize}
\setlength{\itemsep}{0pt}
\setlength{\parsep}{0pt}
\item[{\bf(a)}] Let $\tau$ be any triangulation of\/ $X$ into smooth $n$-simplices $\si:\De_n\ra X,$ and let\/ $Y$ be the $(n-d)$-skeleton of\/ $\tau,$ that is, the union of all\/ $i$-dimensional faces of simplices in $\tau$ for $i\le n-d$. Then $Y$ is a closed subset of\/ $X,$ and a finite CW complex of dimension $n-d$.

We can find a trivialization $\Phi:P\vert_{X\sm Y}\,\smash{\buildrel\cong\over\longra}\, (X\sm Y)\t G$. 
\item[{\bf(b)}] Suppose $\tau_0,Y_0,\Phi_0$ and\/ $\tau_1,Y_1,\Phi_1$ are alternative choices in  {\bf(a)}. Then there exists a triangulation $\ti\tau$ of\/ $X\t[0,1]$ into smooth $(n+1)$-simplices, which restricts to $\tau_i$ on $X\t\{i\}$ for $i=0,1$. Let\/ $Z$ be the $(n+1-d)$-skeleton of\/ $\ti\tau$ relative to $X\t\{0,1\},$ i.e.\ the union of all\/ $i$-dimensional faces of simplices in $\ti\tau$ which have either $i\le n-d,$ or $i=n+1-d$ and do not lie wholly in\/ $X\t\{0,1\}$. Then $Z$ is closed in\/ $X\t[0,1],$ and a finite CW complex of dimension $n+1-d,$ with $Z\cap (X\t\{i\})=Y_i\t\{i\}$ for\/~$i=0,1$. 

We can find a trivialization $\Psi:\pi_X^*(P)\vert_{(X\t[0,1])\sm Z}\,\smash{\buildrel\cong\over\longra}\,((X\t[0,1])\sm Z)\t G,$ such that\/ $\Psi\vert_{(X\sm Y_i)\t\{i\}}=\Phi_i$ for\/~$i=0,1$.
\end{itemize}
\end{prop}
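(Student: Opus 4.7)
My plan is to deduce both parts from obstruction theory, using the standard fact that in a triangulated $n$-manifold the complement of the $(n-d)$-skeleton deformation retracts onto the dual $(d-1)$-skeleton arising from the barycentric subdivision. Under the homotopy hypotheses on $G$ recalled in Remark \ref{or3rem1}, principal $G$-bundles over CW complexes of dimension at most $d-1$ are trivializable.

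For (a), closedness of $Y$ and its being a finite $(n-d)$-dimensional CW complex are immediate from the definition as a finite union of closed simplices. Passing to the barycentric subdivision $\tau'$ of $\tau$, I will form the dual cell decomposition of $X$ in which the dual $D(\sigma)$ of a $k$-simplex $\sigma$ of $\tau$ is an $(n-k)$-disk built from those simplices of $\tau'$ whose vertices $b(\sigma_0)<\cdots<b(\sigma_r)$ satisfy $\sigma_0=\sigma$. The dual $(d-1)$-skeleton $Y^*:=\bigcup_{\dim\sigma\ge n-d+1}D(\sigma)$ is then a finite CW subcomplex of $X$ of dimension at most $d-1$, contained in $X\setminus Y$, and a straight-line retraction within each simplex of $\tau'$ gives a deformation retraction $X\setminus Y\searrow Y^*$. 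The classifying map $X\setminus Y\to BG$ of $P|_{X\setminus Y}$ therefore factors up to homotopy through $Y^*$, and cellular obstruction theory with obstructions in $H^i(Y^*;\pi_{i-1}(G))$ for $1\le i\le d-1$ produces a null-homotopy using Remark \ref{or3rem1}(b),(c); pulling back along the retraction yields the trivialization $\Phi$.

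For (b), the first task is to produce the required triangulation $\ti\tau$ of $X\t[0,1]$ restricting to $\tau_0,\tau_1$ on the boundary, which is standard PL topology. Closedness of $Z$ and the identifications $Z\cap(X\t\{i\})=Y_i\t\{i\}$ follow by inspection of the defining condition: if an $(n+1-d)$-simplex of $\ti\tau$ lies entirely in $X\t\{i\}$ then it is an $(n+1-d)$-face of $\tau_i$, hence excluded from both $Z$ and $Y_i$, whereas any simplex of $\ti\tau$ of dimension $\le n-d$ lying in $X\t\{i\}$ is already a face of $\tau_i$ and thus lies in $Y_i$. The same dual-cell argument, now applied relatively to $\ti\tau$ and the subcomplex $X\t\{0,1\}$, produces a deformation retraction of $(X\t[0,1])\sm Z$ onto a relative CW complex of relative dimension at most $d-1$, fixing the boundary $((X\sm Y_0)\t\{0\})\cup ((X\sm Y_1)\t\{1\})$. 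The boundary trivializations $\Phi_0,\Phi_1$ assemble into a trivialization of $\pi_X^*(P)$ on this boundary, and relative obstruction theory with obstructions in relative cohomology groups with coefficients $\pi_{i-1}(G)$ extends this to the desired $\Psi$, again using the vanishing of $\pi_j(G)$ for $j\le d-2$.

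The main obstacle will be the rigorous setup of the dual-cell deformation retraction, especially in the relative setting of (b), where compatibility of the dual cells of $\ti\tau$ with the prescribed structure on $X\t\{0,1\}$ must be verified; the fact that $\ti\tau$ restricts to $\tau_0\amalg\tau_1$ on the boundary makes this compatibility essentially automatic, but the detailed PL bookkeeping is the technical heart of the argument. Once the relative retraction is in hand, the bundle-theoretic extension is a routine application of obstruction theory.
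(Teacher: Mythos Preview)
Your proposal is correct and follows essentially the same approach as the paper: both construct the dual $(d-1)$-skeleton via barycentric subdivision, show that the complement of the $(n-d)$-skeleton deformation retracts onto it, and use the vanishing of $\pi_j(G)$ for $j\le d-2$ to trivialize (the paper cites Remark~\ref{or3rem1} directly, you unpack this as cellular obstruction theory, which is the same content). For part (b) the paper likewise builds the dual complex $D\subset X\times[0,1]$ of dimension $d-1$ meeting the boundary in $C_i\times\{i\}$, extends $\Phi_i|_{C_i}$ across $D$, and then extends via the retraction; your relative obstruction-theory formulation is a mild repackaging of the same argument.
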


\begin{proof} For (a), given a triangulation $\tau$ of $X$, let $\tau'$ be the {\it barycentric subdivision\/} of $\tau$, that is, the subtriangulation that places an extra vertex at the barycentre of each $i$-simplex in $\tau$ for $i>0$, and divides each $k$-simplex $\si:\De_k\ra X$ in $\tau$ into $(k+1)!$ smaller $k$-simplices. Define $C\subset X$ to be the union of all $i$-simplices $\si'_i(\De_i)\subset X$ in $\tau'$ for $i=0,\ldots,d-1$ which meet an $(n-i)$-simplex $\si_{n-i}(\De_{n-i})$ in $\tau$ transversely at the barycentre of $\si_{n-i}(\De_{n-i})$. Then $C$ is closed in $X$, and is a CW complex of dimension~$d-1$. 

We can think of $C$ as the $(d-1)$-skeleton of the `dual triangulation' $\tau^*$ of $\tau$, though $\tau^*$ divides $X$ into polyhedra rather than simplices. For example, the icosahedron is a triangulation of $\cS^2$ into twenty 2-simplices, thirty 1-simplices and twelve 0 simplices. The `dual triangulation' is the dodecahedron, which divides $\cS^2$ into twenty 0-simplices, thirty 1-simplices, and twelve pentagons.

The important facts we need are that $C\cap Z=\es$, and $X\sm Y$ retracts onto $C$, since $X\sm Y$ is a union of interiors of $i$-simplices $\si'_i(\De_i)\subset X$ in $\tau'$ all of which have one face in $C$, and can be retracted onto $C$ in a natural way. As $C$ is a CW complex of dimension $d-1$, we see that $P\vert_C$ is trivial by Remark \ref{or3rem1}(a),(c). Since $X\sm Y$ retracts onto $C$, it follows that $P\vert_{X\sm Y}$ is trivial, proving~(a).
 
For (b), by standard facts about triangulations we can choose $\ti\tau$. Let $\ti\tau'$ be the barycentric subdivision of $\ti\tau$, and define $D\subset X\t[0,1]$ to be the union of all $i$-simplices $\ti\si'_i(\De_i)\subset X\t[0,1]$ in $\ti\tau'$ for $i=0,\ldots,d-1$ which meet an $(n+1-i)$-simplex $\ti\si_{n+1-i}(\De_{n+1-i})$ in $\ti\tau$ transversely at the barycentre of $\ti\si_{n+1-i}(\De_{n+1-i})$. Then $D$ is closed in $X\t[0,1]$, and is a CW complex of dimension $d-1$, with $D\cap X\t\{i\}=C_i\t\{i\}$ for $i=0,1$. As above we have $D\cap Z=\es$, and $(X\t[0,1])\sm Z$ retracts onto~$D$.

Since $D$ is a CW complex of dimension $d-1$, $P\vert_D$ is trivial by Remark \ref{or3rem1}(a),(c). Furthermore, as $(C_0\t\{0\})\amalg(C_1\t\{1\})$ is a CW-subcomplex of $D$, the trivializations $\Phi_i\vert_{C_i\t\{i\}}$ of $\pi_X^*(P)\vert_{C_i\t\{i\}}$ for $i=0,1$ can be extended to a single trivialization of $\pi_X^*(P)\vert_D$. As $(X\t[0,1])\sm Z$ retracts onto $D$, we can then extend the trivialization of $\pi_X^*(P)$ from $D$ to $\Psi:\pi_X^*(P)\vert_{(X\t[0,1])\sm Z}\ra ((X\t[0,1])\sm Z)\t G$, such that $\Psi\vert_{(X\sm Y_i)\t\{i\}}=\Phi_i$ for $i=0,1$.
\end{proof}

\subsection{A general method for solving Problem \ref{or3prob}}
\label{or33}

Suppose we are given a Gauge Orientation Problem, as in Definition \ref{or1def3} and Example \ref{or1ex1}. We will take the family $\cL$ of Lie groups $G$ to be either:
\begin{itemize}
\setlength{\itemsep}{0pt}
\setlength{\parsep}{0pt}
\item[(a)] all connected Lie groups $G$, so Proposition \ref{or3prop1} applies with $d=2$; or
\item[(b)] all connected, simply-connected Lie groups $G$, so Proposition \ref{or3prop1} applies with $d=4$.
\item[(c)] $\cL$ is $\bigl\{\SU(m):m=1,2,\ldots\bigr\}$, so Proposition \ref{or3prop1} applies with $d=4$, and we can use results on stabilization and K-theory.
\end{itemize}

We now outline a strategy for solving Problem~\ref{or3prob}:
\smallskip

\noindent{\bf Step 1.} Suppose for simplicity that $\cS^n$ admits a geometric structure $\T'$ of the prescribed kind, and that $\T'$ is unique up to isotopy. Prove that when $X=\cS^n$, all moduli spaces $\B_P$ are n-orientable. 
\smallskip

\noindent{\bf Step 2.} Choose n-orientations for all moduli spaces $\B_P$ when $X=\cS^n$.
\smallskip

\noindent{\bf Step 3.} Let $(X,\T)$, $G$ and $P\ra X$ be as in Definition \ref{or1def3}, and let $\nabla_P$ be a connection on $P$. By Proposition \ref{or3prop1}(a) we can choose an $(n-d)$-skeleton $Y\subset X$ and a trivialization $\Phi:P\vert_{X\sm Y}\ra(X\sm Y)\t G$. Choose a small open neighbourhood $U$ of $Y$ in $X$ such that $U$ retracts onto $Y$. Choose an open $V\subset X$ with $\ov V\subseteq X\sm Y$ and~$U\cup V=X$.

Choose a connection $\hat\nabla_P$ on $P\ra X$ which is trivial over $V\subset X\sm Y$, using the chosen trivialization $\Phi$ of $P\vert_{X\sm Y}$. Choose an embedding $\io:U\hookra \cS^n$ of $U$ as an open submanifold of $\cS^n$, if this is possible, and a geometric structure $\T'$ on $\cS^n$ of the prescribed kind, such that $\io^*(\T')\cong \T\vert_U$. Set $U'=\io(U)$ and $V'=\io(U\cap V)\amalg (\cS^n\sm U')$. Then $V'\subset\cS^n$ is open with~$U'\cup V'=\cS^n$.

Define a principal $G$-bundle $P'\ra\cS^n$, such that $P'\vert_{U'}$ is identified with $P\vert_U$ under $\io:U\ra U'$, and $P'\vert_{V'}$ is trivial, and on the overlap $U'\cap V'$, the identification matches the given trivialization $\Phi\vert_{U\cap V}$ of $P\vert_U$ on $U\cap V$ under~$\io$.

Define a connection $\hat\nabla_{P'}$ on $P'\ra\cS^n$, such that $\hat\nabla_{P'}\vert_{U'}$ is identified with $\hat\nabla_P\vert_U$ under the identification of $P'\vert_{U'}$ with $P\vert_U$ under $\io:U\ra U'$, and $\hat\nabla_{P'}$ is trivial over $V'$, using the chosen trivialization of $P'\vert_{V'}$. 

Theorem \ref{or3thm1} now gives an isomorphism of $\Z_2$-torsors
\e
\smash{\Om^{X\cS^n}:\check O_{P}^{E_\bu}\big\vert_{[\hat\nabla_{P}]}\,{\buildrel\cong\over\longra}\,\check O_{P'}^{E_\bu'}\big\vert_{[\hat\nabla_{P'}]}.}
\label{or3eq3}
\e
The right hand side has a chosen n-orientation by Step 2, and Problem \ref{or3prob}(ii) requires $\Om^{X\cS^n}$ to be n-orientation-preserving, so this gives an n-orientation of $O_P^{E_\bu}\big\vert_{[\hat\nabla_{P}]}$. We then determine the n-orientation of $O_{P}^{E_\bu}\big\vert_{[\nabla_{P}]}$ by choosing a smooth path from $\hat\nabla_P$ to $\nabla_P$ in the (contractible) space of connections on $P\ra X$, and deforming the n-orientation continuously along this path.

This constructs an n-orientation at any point $[\nabla_P]$ in $\B_P$, for any $(X,\T)$, Lie group $G$, and principal $G$-bundle $P\ra X$, which is uniquely determined by Step 2 and Problem~\ref{or3prob}(ii).
\smallskip

\noindent{\bf Step 4.} Prove that the n-orientation on $\B_P$ at $[\nabla_P]$ in Step 3 is independent of all arbitrary choices in its construction, and so is well defined.

Start with two sets of choices $Y_0,\Phi_0,U_0,\ab V_0,\ab\io_0,\ab\hat\nabla_{P,0}$ and $Y_1,\ab\Phi_1,\ab U_1,\ab V_1,\ab\io_1,\ab\hat\nabla_{P,1}$ in Step 3. We use Proposition \ref{or3prop1}(b) to get an $(n-d)$-skeleton $Z\subset X\t[0,1]$ interpolating between $Y_0$ and $Y_1$, and a trivialization $\Psi:\pi_X^*(P)\vert_{(X\t[0,1])\sm Z}\ra ((X\t[0,1])\sm Z)\t G$ interpolating between $\Phi_0$ and $\Phi_1$. We choose a small open neighbourhood $W$ of $Z$ in $X\t[0,1]$ which interpolates between $U_0$ and $U_1$ and retracts onto $Z$. Then we construct data on $X\t[0,1]$ and $\cS^n\t[0,1]$ interpolating between $V_0,\io_0,\hat\nabla_{P,0}$ and $V_1,\io_1,\hat\nabla_{P,1}$, if this is possible. This gives a continuous family of excision problems from $X$ to $\cS^n$ parametrized by $t\in[0,1]$, yielding a 1-parameter family of isomorphisms \eq{or3eq3}. Theorem \ref{or3thm1}(i) says that these depend continuously on $t\in[0,1]$. Thus the n-orientations on $\B_P$ at $[\nabla_P]$ determined by $Y_0,\ldots,\hat\nabla_{P,0}$ and $Y_1,\ldots,\hat\nabla_{P,1}$ are joined by a continuous family over $t\in[0,1]$, so they are equal, and independent of choices.
\smallskip

\noindent{\bf Step 5.} Steps 1--4 give canonical n-orientations $\check\om_P$ on all moduli spaces $\B_P$ in the Gauge Orientation Problem. Finally, we show that these n-orientations satisfy any other properties that we want, e.g.\ Problem \ref{or3prob}(i)--(ii), or comparison of n-orientations for $\U(m)$-bundles under direct sums with given signs, as in~\S\ref{or25}.

\smallskip

We will use versions of this method in the proofs of Theorems \ref{or4thm1}, \ref{or4thm2}, \ref{or4thm4}, and \ref{or4thm8} below (though not using $\cS^n$ as the model space in Steps 1 and 2), and in the sequels~\cite{CGJ,JoUp}.

\section{Application to orientations in gauge theory}
\label{or4}

We now apply the ideas of \S\ref{or2}--\S\ref{or3} to construct canonical orientations on several classes of gauge theory moduli spaces. Some of our results are new. We begin with a general discussion of gauge theory moduli spaces in~\S\ref{or41}.

\subsection{Orienting moduli spaces in gauge theory}
\label{or41}

In gauge theory one studies moduli spaces $\M_P^{\rm ga}$ of (irreducible) connections $\nabla_P$ on a principal bundle $P\ra X$ (perhaps plus some extra data, such as a Higgs field) satisfying a curvature condition. Under suitable genericity conditions, these moduli spaces $\M_P^{\rm ga}$ will be smooth manifolds, and the ideas of \S\ref{or2}--\S\ref{or3} can often be used to prove $\M_P^{\rm ga}$ is orientable, and construct a canonical orientation on $\M_P^{\rm ga}$. These orientations are important in defining enumerative invariants such as Casson invariants, Donaldson invariants, and Seiberg--Witten invariants. We illustrate this with the example of instantons on 4-manifolds,~\cite{DoKr}:

\begin{ex} Let $(X,g)$ be a compact, oriented Riemannian 4-manifold, and $G$ a Lie group (e.g.\ $G=\SU(2)$), and $P\ra X$ a principal $G$-bundle. For each connection $\nabla_P$ on $P$, the curvature $F^{\nabla_P}$ is a section of $\Ad(P)\ot\La^2T^*X$. We have $\La^2T^*X=\La^2_+T^*X\op\La^2_-T^*X$, where $\La^2_\pm T^*X$ are the subbundles of 2-forms $\al$ on $X$ with $*\al=\pm\al$. Thus $F^{\nabla_P}=F^{\nabla_P}_+\op F^{\nabla_P}_-$, with $F^{\nabla_P}_\pm$ the component in $\Ad(P)\ot\La^2_\pm T^*X$. We call $(P,\nabla_P)$ an ({\it anti-self-dual\/}) {\it instanton\/} if $F^{\nabla_P}_+=0$.

Write $\M_P^{\rm asd}$ for the moduli space of gauge isomorphism classes $[\nabla_P]$ of irreducible instanton connections $\nabla_P$ on $P$, modulo $\G_P/Z(G)$. The deformation theory of $[\nabla_P]$ in $\M_P^{\rm asd}$ is governed by the Atiyah--Hitchin--Singer complex \cite{AHS}: 
\e
\begin{gathered} 
\xymatrix@C=8pt@R=7pt{ 0 \ar[rr] && 
\Ga^{\iy} ( \Ad(P) \ot \La^0T^*X ) \ar[rrr]^{\d^{\nabla_P}} &&& 
\Ga^{\iy} ( \Ad(P) \ot \La^1T^*X  ) \\
&& {\qquad\qquad\qquad} \ar[rrr]^{\d^{\nabla_P}_+} &&&
\Ga^{\iy} ( \Ad(P) \ot \La^2_+T^*X  ) \ar[rr] && 0, } 
\end{gathered}
\label{or4eq1}
\e
where $\d^{\nabla_P}_+\ci\d^{\nabla_P}=0$ as $F^{\nabla_P}_+=0$. Write $\cH^0,\cH^1,\cH^2_+$ for the cohomology groups of \eq{or4eq1}. Then $\cH^0$ is the Lie algebra of $\Aut(\nabla_P)$, so $\cH^0=Z(\g)$, the Lie algebra of the centre $Z(G)$ of $G$, as $\nabla_P$ is irreducible. Also $\cH^1$ is the Zariski tangent space of $\M_P^{\rm asd}$ at $[\nabla_P]$, and $\cH^2_+$ is the obstruction space. If $g$ is generic then for non-flat connections $\cH^2_+=0$ for all $\nabla_P$, as in \cite[\S 4.3]{DoKr}, and $\M_P^{\rm asd}$ is a smooth manifold, with tangent space $T_{[\nabla_P]}\M_P^{\rm asd}=\cH^1$. Note that $\M_P^{\rm asd}\subset\ovB_P$ is a subspace of the topological stack $\ovB_P$ from Definition~\ref{or1def1}.

Take $E_\bu$ to be the elliptic operator on $X$
\e
D=\d+\d_+^*:\Ga^\iy(\La^0T^*X\op\La^2_+T^*X)\longra\Ga^\iy(\La^1T^*X).	
\label{or4eq2}
\e
Turning the complex \eq{or4eq1} into a single elliptic operator as in Remark \ref{or2rem2}(ii) yields the twisted operator $D^{\nabla_{\Ad(P)}}$ from \eq{or1eq2}. Hence the line bundle $\bar L^{E_\bu}_P\ra\ovB_P$ in Definition \ref{or1def2} has fibre at $[\nabla_P]$ the determinant line of \eq{or4eq1}, which (after choosing an isomorphism $\det Z(\g)\cong\R$) is $\det(\cH^1)^*=\det T^*_{[\nabla_P]}\M_P^{\rm asd}$. It follows that $\bar O_P^{E_\bu}\vert_{\M_P^{\rm asd}}$ is the orientation bundle of the manifold $\M_P^{\rm asd}$, and an orientation on $\ovB_P$ in Definition \ref{or1def2} (which is equivalent to an orientation on $\B_P$) restricts to an orientation on the manifold $\M_P^{\rm asd}$ in the usual sense of differential geometry. This is a very useful way of defining orientations on~$\M_P^{\rm asd}$.

\label{or4ex1}	
\end{ex}

There are several other important classes of gauge-theoretic moduli spaces $\M_P^{\rm ga}$ which have elliptic deformation theory, and so are generically smooth manifolds, for which orientations can be defined by pullback from $\ovB_P$. See Reyes Carri\'on \cite{ReCa} for a study of instanton-type equations governed by complexes generalizing \eq{or4eq1}. We can also generalize the programme above in three ways:

\begin{rem}{\bf(i)} For example, for `$G_2$-instantons' on a 7-manifold $(X,\vp,g)$ with holonomy $G_2$ we replace \eq{or4eq1} by a {\it four term\/} elliptic complex:
\e
\begin{gathered} 
\xymatrix@C=5pt@R=7pt{ 0 \ar[rr] && 
\Ga^{\iy} ( \Ad(P) \ot \La^0T^*X ) \ar[rrr]^{\d^{\nabla_P}} &&& 
\Ga^{\iy} ( \Ad(P) \ot \La^1T^*X  ) \\
\ar[rr]^(0.2){\d^{\nabla_P}_7} && {\Ga^{\iy} ( \Ad(P) \ot \La^2_7T^*X  )} \ar[rrr]^{*\vp\w\d^{\nabla_P}} &&&
\Ga^{\iy} ( \Ad(P) \ot \La^7T^*X  ) \ar[rr] && 0, } 
\end{gathered}
\label{or4eq3}
\e
where exactness follows from $\pi^2_7(F^{\nabla_P})=0$, $\d(*\vp)=0$, and the Bianchi identity. The cohomology at the fourth term is dual to the cohomology at the first term, and so is $Z(\g)^*$ for irreducible connections. Because of this, $G_2$-instanton moduli spaces $\M_P^{G_2}$ are generically manifolds with well-behaved orientations. Flat connections on 3-manifolds are similar.
\smallskip

\noindent{\bf(ii)} Many interesting problems involve moduli spaces $\M_P^{\rm Hi}$ of pairs $(\nabla_P,H)$, where $\nabla_P$ is a connection on $P\ra X$, and $H$ is some extra data, such as a {\it Higgs field}, a section of a vector bundle on $X$ defined using $P$, where $(\nabla_P,H)$ satisfy some p.d.e. Under good conditions $\M_P^{\rm Hi}$ is a manifold, and the orientation bundle of $\M_P^{\rm Hi}$ is the pullback of an orientation bundle $O_P^{E_\bu}\ra\B_P$ or $\bar O_P^{E_\bu}\ra\ovB_P$ under the forgetful map $\M_P^{\rm Hi}\ra\B_P$ or $\M_P^{\rm Hi}\ra\ovB_P$, $[\nabla_P,H]\mapsto[\nabla_P]$.
\smallskip

\noindent{\bf(iii)} If we omit the genericness/transversality conditions, gauge theory moduli spaces $\M_P^{\rm ga}$ are generally not smooth manifolds. However, as long as their deformation theory is given by an elliptic complex similar to \eq{or4eq1} or \eq{or4eq3} whose cohomology is constant except at the second and third terms, $\M_P^{\rm ga}$ will still be a {\it derived smooth manifold\/} ({\it d-manifold}, or {\it m-Kuranishi space\/}) in the sense of Joyce \cite{Joyc1,Joyc3,Joyc4,Joyc5}. Orientations for derived manifolds are defined and well behaved, and we can define orientations on $\M_P^{\rm ga}$ by pullback of orientations on $\ovB_P$ exactly as in the case when $\M_P^{\rm ga}$ is a manifold.
\label{or4rem1}
\end{rem}

\subsection{Examples of orientation problems}
\label{or42}

We now give a series of examples of gauge theory moduli spaces we can orient using our techniques, in dimensions~$n=2,\ldots,6$.

\subsubsection{Flat connections on 2-manifolds}
\label{or421}

Let $X$ be a compact 2-manifold, $G$ a Lie group, and $P\ra X$ a principal $G$-bundle. Consider the moduli space $\M_P^{\rm fl}$ of irreducible flat connections $\nabla_P$ on $P$. Then (at least if $X$ is orientable) $\M_P^{\rm fl}$ is a smooth manifold. The deformation theory of $[\nabla_P]$ in $\M_P^{\rm fl}$ is controlled by the elliptic complex
\begin{equation*} 
\xymatrix@C=8pt@R=7pt{ 0 \ar[rr] && 
\Ga^{\iy} ( \Ad(P) \ot \La^0T^*X ) \ar[rrr]^{\d^{\nabla_P}} &&& 
\Ga^{\iy} ( \Ad(P) \ot \La^1T^*X  ) \\
&& {\qquad\qquad\qquad} \ar[rrr]^{\d^{\nabla_P}} &&&
\Ga^{\iy} ( \Ad(P) \ot \La^2T^*X  ) \ar[rr] && 0, } 
\end{equation*}
with $\det T^*\M_P^{\rm fl}$ the determinant line of this complex.

Choose a Riemannian metric $g$ on $X$, and take $E_\bu$ to be the elliptic operator
\begin{equation*}
D=\d\op\d^*:\Ga^\iy(\La^0T^*X\op\La^2T^*X)\longra\Ga^\iy(\La^1T^*X).	
\end{equation*}
Then the orientation bundle of the manifold $\M_P^{\rm fl}$ is the pullback under the inclusion $\M_P^{\rm fl}\hookra\ovB_P$ of the bundle $\bar O_P^{E_\bu}\ra\ovB_P$ from Definition~\ref{or1def2}. 

In the next theorem, part (a) is proved by Freed, Hopkins and Teleman \cite[\S 3]{FHT} as part of their construction of a 2-d TQFT, but (b) may be new.

\begin{thm}{\bf(a)} For compact, oriented\/ $2$-manifolds $X,$ the moduli spaces $\M_P^{\rm fl}$ above have canonical orientations for all\/ $G$ and\/ $P\ra X$.
\smallskip

\noindent{\bf(b)} If\/ $X$ is not oriented, then after choosing orientations for\/ $\g$ and\/ $\det D$ we can define a canonical orientation on $\M_P^{\rm fl}$ if\/ $G$ is any connected, simply-connected Lie group, or if\/ $G=\U(m)$. Also $\M_P^{\rm fl}$ is orientable if\/~$G=\SO(3)$.

\label{or4thm1}
\end{thm}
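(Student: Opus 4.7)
My plan is to reduce the orientability of $\M_P^{\rm fl}$ to that of the ambient $\B_P$ (equivalently $\ovB_P$), exactly as in Example \ref{or4ex1}: $\det T^*\M_P^{\rm fl}$ is the determinant line of the deformation complex, which is $\bar L^{E_\bu}_P$ restricted along $\M_P^{\rm fl}\hookra\ovB_P$ for $E_\bu=(\La^0T^*X\op\La^2T^*X,\La^1T^*X,\d\op\d^*)$ (cf.\ Remark \ref{or2rem2}(ii)). So it suffices to (canonically) orient $\B_P$ in each case.

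For part (a), the oriented case, I will directly invoke Example \ref{or2ex1} with $n=0$: on an oriented Riemannian 2-manifold the Hodge star endows $E_0=\La^0T^*X\op\La^2T^*X$ and $E_1=\La^1T^*X$ with complex structures under which the symbol of $D$ is complex linear. Theorem \ref{or2thm1} then delivers canonical orientations on $\B_P$ for every Lie group $G$ and every bundle $P$, with no auxiliary data, and part (a) follows.

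For part (b) with $X$ non-orientable, I split into three sub-cases. For $G$ connected and simply-connected, $G$-bundles on a 2-manifold are classified by $\pi_1(G)=0$, so $P\cong X\t G$ is trivial; moreover $\pi_2(G)=0$ (Remark \ref{or3rem1}(b)(iii)) renders $G$ 2-connected, so standard obstruction theory gives $[X,G]=0$, and hence $\G_P=\Map(X,G)$ is connected. The line bundle $L_P^{E_\bu}$ is trivializable over the contractible affine space $\A_P$, and at $\nabla^0$ the standard orientation of \S\ref{or222} is furnished by the chosen orientations of $\g$ and $\det D$ via \eq{or1eq6}--\eq{or1eq7}; this extends uniquely by continuity to $\A_P$ and descends to $\B_P$ because $\G_P$, being connected, acts trivially on the discrete set of orientations of a trivializable line bundle. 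Independence of the chosen trivialization of $P$ is automatic, as any two differ by an element of the connected group $\G_P$.

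For $G=\U(m)$, I apply the inclusion $\U(m)\hookra\SU(m+1)$ of Example \ref{or2ex5}, for which $\m=\su(m+1)/\u(m)\cong\C^m$ carries a canonical $\U(m)$-invariant complex structure; the chosen orientation of $\u(m)$ together with this complex structure orients $\su(m+1)$, and the first sub-case applied to $Q=(P\t\SU(m+1))/\U(m)$ orients $\B_Q$ canonically, whence pulling back along $\xi_P^Q:O_P^{E_\bu}\ra(\Xi_P^Q)^*(O_Q^{E_\bu})$ yields the desired canonical orientation on $\B_P$. For $G=\SO(3)$, since $\dim X=2$ we have $H^3(X,\Z)=0$, which in particular has no 2-torsion, so Proposition \ref{or2prop2} applies and orientability for all $\U(2)$-bundles (just established) transfers to orientability of $\B_P$ for every $\SO(3)$-bundle. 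The main hurdle throughout is the first sub-case of (b): descending the orientation from $\A_P$ to $\B_P$ rests crucially on $\G_P$ being connected, which in turn rests on the universal vanishing $\pi_2(G)=0$ for Lie groups. The secondary subtlety in the $\U(m)$ sub-case, namely that the orientation should depend on orientation data on $\u(m)$ rather than $\su(m+1)$, is resolved by the canonical $\U(m)$-invariant complex structure on $\m$.
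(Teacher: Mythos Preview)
Your proof is correct and follows essentially the same route as the paper. Part (a) is identical (complex structure via Hodge star, Theorem~\ref{or2thm1}), and the $\U(m)$ and $\SO(3)$ reductions in (b) match the paper's use of Example~\ref{or2ex5} and Proposition~\ref{or2prop2} verbatim.

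The only presentational difference is in the connected simply-connected case of (b). The paper invokes the excision framework of \S\ref{or33} with $d=4$, observing that since $n=2<d$ the skeleta $Y,Z$ are empty and the method degenerates trivially. You instead argue directly: $\pi_1(G)=0$ forces $P\cong X\times G$, and $\pi_0(G)=\pi_1(G)=\pi_2(G)=0$ forces $[X,G]=\{*\}$ so $\G_P$ is connected, whence the standard orientation at $[\nabla^0]$ descends uniquely. These are the same argument unpacked: the conditions $Y=\emptyset$ and $Z=\emptyset$ in Proposition~\ref{or3prop1} are precisely your two homotopy-theoretic vanishing statements. Your version is more self-contained and avoids referencing the general excision machinery, at the cost of not exhibiting the result as a degenerate instance of the method used elsewhere in the paper; the paper's version buys uniformity with the higher-dimensional arguments in \S\ref{or422}--\S\ref{or427}.
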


\begin{proof} Part (a) holds by Theorem \ref{or2thm1}, as if $X$ is oriented then there are complex structures on $E_0\cong X\t\C$ and $E_1\cong T^{*(0,1)}X$ for which the symbol of $D\cong\bar\partial$ is complex linear. Part (b) for $G$ connected and simply-connected works by the method of \S\ref{or33} with $d=4$ in a trivial way, as $Y=Z=\es$ in Steps 3 and 4 for dimensional reasons, so Steps 1 and 2 are unnecessary. Part (b) for $G=\U(m)$ then follows from Example \ref{or2ex5}, and for $G=\SO(3)$ from Proposition \ref{or2prop2}, noting that $H^3(X,\Z)=0$ as~$\dim X=2$. 
\end{proof}

\subsubsection{Flat connections on 3-manifolds, and Casson invariants}
\label{or422}

Let $X$ be a compact 3-manifold, $G$ a Lie group, and $P\ra X$ a principal $G$-bundle. Consider the moduli space $\M_P^{\rm fl}$ of irreducible flat connections $\nabla_P$ on $P$. In contrast to the 2-dimensional case, $\M_P^{\rm fl}$ is generally {\it not\/} a smooth manifold. However, (at least if $X$ is orientable) $\M_P^{\rm fl}$ is a {\it derived manifold\/} of virtual dimension 0, as in Remark \ref{or4rem1}(iii), so orientations for $\M_P^{\rm fl}$ make sense. As in Remark \ref{or4rem1}(i), the deformation theory of $[\nabla_P]$ in $\M_P^{\rm fl}$ is controlled by the four term elliptic complex
\begin{equation*} 
\xymatrix@C=5pt@R=7pt{ 0 \ar[rr] && 
\Ga^{\iy} ( \Ad(P) \ot \La^0T^*X ) \ar[rrr]^{\d^{\nabla_P}} &&& 
\Ga^{\iy} ( \Ad(P) \ot \La^1T^*X  ) \\
\ar[rr]^(0.2){\d^{\nabla_P}} && {\Ga^{\iy} ( \Ad(P) \ot \La^2T^*X  )} \ar[rrr]^{\d^{\nabla_P}} &&&
\Ga^{\iy} ( \Ad(P) \ot \La^3T^*X  ) \ar[rr] && 0, } 
\end{equation*}
which should have constant cohomology at the first and fourth terms for $\M_P^{\rm fl}$ to be a derived manifold.

The moduli spaces $\M_P^{\rm fl}$ are studied in connection with the Casson invariant of 3-manifolds, as in Akbulut and McCarthy \cite{AkMc}. Casson originally defined a $\Z$-valued invariant $\Cass(X)$ of an oriented integral homology 3-sphere $X$ using a Heegard splitting of $X$. Later, Taubes \cite{Taub1} provided an alternative definition of $\Cass(X)$ as a virtual count of $\M_P^{\rm fl}$ for $P\ra X$ the trivial $\SU(2)$-bundle.

The theory of Casson invariants has been generalized in several directions. As in Donaldson \cite{Dona3}, $\Cass(X)$ is the Euler characteristic of the $\SU(2)$-instanton Floer homology groups of $X$. Boden and Herald \cite{BoHe} defined an invariant for homology 3-spheres as a virtual count of $\M_P^{\rm fl}$ for $P\ra X$ the trivial $\SU(3)$-bundle, and there are extensions to 3-manifolds other than homology 3-spheres using flat connections on $\U(2)$-bundles and $\SO(3)$-bundles,~\cite[\S 5.6]{Dona3}.

Choose a Riemannian metric $g$ on $X$, and take $E_\bu$ to be the elliptic operator
\begin{equation*}
D=\d+\d^*:\Ga^\iy(\La^0T^*X\op\La^2T^*X)\longra\Ga^\iy(\La^1T^*X\op\La^3T^*X).	
\end{equation*}
Then the orientation bundle of the derived manifold $\M_P^{\rm fl}$ is the pullback under the inclusion $\M_P^{\rm fl}\hookra\ovB_P$ of the bundle $\bar O_P^{E_\bu}\ra\ovB_P$ from Definition~\ref{or1def2}. 

Note that $X$ need not be orientable in the next theorem.

\begin{thm}{\bf(a)} In the situation above, suppose $\al\in\Ga^\iy(\La^2T^*X)$ is a nonvanishing $2$-form with\/ $\md{\al}_g\equiv 1$. Such\/ $\al$ exist for any compact Riemannian $3$-manifold\/ $(X,g)$. Then there are unique complex vector bundle structures on $E_0,E_1$ such that\/ $i\cdot 1=\al$ in $E_0$ and the symbol of\/ $D$ is complex linear. 

Thus for any Lie group $G$ and principal\/ $G$-bundle $P\ra X,$ Theorem\/ {\rm\ref{or2thm1}} defines a canonical n-orientation on\/ $\B_P$.
\smallskip

\noindent{\bf(b)} If\/ $G$ is connected then the n-orientation on $\B_P$ in {\bf(a)} is independent of\/~$\al$.

\label{or4thm2}	
\end{thm}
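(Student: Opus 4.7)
The plan is to establish part~(a) by a Clifford-algebra construction on $\Lambda^* T^*X$, and part~(b) by localizing the dependence on $\alpha$ via excision and reducing to the trivial bundle at the trivial connection, where the n-orientation is canonically independent of~$\alpha$.

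\smallskip\noindent\textbf{Part (a).} Existence of a nowhere-vanishing unit section $\alpha$ of $\Lambda^2 T^*X$: via the natural isomorphism $\Lambda^2 T^*X \cong TX \otimes \Lambda^3 T^*X$, this rank-$3$ bundle is orientable since $w_1 = 2 w_1(TX) = 0$, and its Euler class vanishes by stable parallelizability of $3$-manifolds; a nowhere-zero section then exists, and may be rescaled so that $\md{\alpha}_g = 1$. For the complex structures, I would identify $\Lambda^* T^*X$ with the Clifford algebra bundle $\mathrm{Cl}(T^*X,g)$ as graded real vector bundles, under which $\sigma(D)(\xi)$ becomes left Clifford multiplication $L_\xi$. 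Define $J$ on $\Lambda^* T^*X$ by right Clifford multiplication $R_\alpha$. Since $\alpha$ is a unit $2$-form, $\alpha \cdot_{\mathrm{Cl}} \alpha = -1$, hence $J^2 = -1$; by associativity, $J$ commutes with every $L_\xi$, making the symbol $\C$-linear; and since $\alpha$ is even, $J$ preserves parity, yielding $J_0, J_1$ on $E_0, E_1$ with $J_0(1) = \alpha$. For uniqueness, any other pair $(J_0', J_1')$ satisfies that $J_i'J_i^{-1}$ commutes with all $L_\xi$; the commutant of left Clifford multiplication in $\End(\mathrm{Cl})$ is right Clifford multiplication, so $J_i' = J_i \ci R_a$ for some $a$ in $\mathrm{Cl}^{\mathrm{even}}$. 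Then $J_0'(1) = \alpha$ forces $\alpha \cdot a = \alpha$, so $a = 1$. The canonical n-orientation on $\B_P$ now follows from Theorem~\ref{or2thm1}.

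\smallskip\noindent\textbf{Part (b).} The crucial observation is: on the trivial bundle $X \times G$ at the trivial connection, $\check O_{X\t G}^{E_\bu}\vert_{[\nabla^0]} = O_{X\t G}^{E_\bu}\vert_{[\nabla^0]} \ot_{\Z_2} O_{X\t G}^{E_\bu}\vert_{[\nabla^0]}$ is a tensor square, hence canonically trivialized independently of any orientation of its tensor factor. Therefore the n-orientation produced by (a) at this basepoint is the canonical one for every choice of~$\alpha$.

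Given two choices $\alpha_0,\alpha_1$ with n-orientations $\check\om_0,\check\om_1$: continuity of the $\Z_2$-bundle trivialization of $\check O_P^{E_\bu}$ yields $\check\om_0 = \check\om_1$ whenever $\alpha_0,\alpha_1$ lie in the same path-component of $\Ga^\iy(S(\Lambda^2 T^*X))$. For the general case, I would show that any two sections of the unit sphere bundle of $\Lambda^2 T^*X$ can be joined by a sequence of local modifications supported in disjoint small open balls (reflecting that $\pi_0 \Ga^\iy(S(\Lambda^2 T^*X))$ is generated by Pontryagin-type local moves), reducing to the case $\alpha_0 = \alpha_1$ on $U = X \sm \overline B$ for a small ball $B \subset X$. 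Since $G$ is connected, Proposition~\ref{or3prop1}(a) with $d=2$ trivializes $P$ away from a $1$-skeleton $Y \subset X$, and by choosing the triangulation one may arrange $Y \cap \overline B = \es$; then $P$ is trivial on a neighbourhood of $\overline B$, and a connection $\hat\nabla_P$ can be chosen trivialized there.

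Now apply Theorem~\ref{or3thm2}(c) with $X^+ = X^- = X$, $\io = \id_U$, identical data $(P,\hat\nabla_P)$, and the two complex structures from $\alpha_0,\alpha_1$ on $E_\bu$. Since these agree on $U$, the trivializations of $\check O_P^{E_\bu}\vert_{[\hat\nabla_P]}$ from (a) are compatible under the excision identity~\eqref{or3eq1}, and the discrepancy between $\check\om_0$ and $\check\om_1$ is localized on $B$. There, since $P$ is trivial and the connection is trivial, the orientation computation reduces to that for the trivial bundle at the trivial connection on a compactification (e.g.\ $\cS^3$), where by the tensor-square observation above the discrepancy vanishes. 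Finally, transport from $[\hat\nabla_P]$ to an arbitrary $[\nabla_P]$ by continuity along a path in the affine space~$\A_P$.

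\smallskip\noindent\textbf{Main obstacle.} The hardest step is the surgery/localization argument: reducing general $\alpha_0,\alpha_1$ to a sequence of local modifications in small balls, and rigorously matching the data of Theorems~\ref{or3thm1} and~\ref{or3thm2}(c)---diffeomorphism, bundle trivializations, and complex structures on the overlap region---so that the difference of trivializations really reduces to the trivial-bundle model. Subtle sign issues may hide in this interface, and one may need to sharpen Proposition~\ref{or3prop1} to place the trivializing $1$-skeleton $Y$ in a prescribed open set disjoint from~$\overline B$.
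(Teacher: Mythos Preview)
Your part~(a) is correct and in fact cleaner than the paper's: the paper writes out the complex structure explicitly in an orthonormal frame $(e_1,e_2,e_3)$ with $\alpha|_x=e_1\wedge e_2$ (namely $i\cdot 1=e_{12}$, $i\cdot e_{13}=e_{23}$, $i\cdot e_1=e_2$, $i\cdot e_3=e_{123}$) and checks frame-independence and $\C$-linearity of the symbol by hand, whereas you recognize this as right Clifford multiplication by~$\alpha$, from which all the required properties follow conceptually.

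For part~(b) you have assembled the right tools (Proposition~\ref{or3prop1} with $d=2$ and Theorem~\ref{or3thm2}) but chosen the open cover the wrong way round, which is why you hit your ``main obstacle''. The paper \emph{swaps the roles of $U$ and $V$}: it takes $U$ to be a small neighbourhood of the $1$-skeleton $Y$ (retracting onto $Y$) and $V\subset X\setminus Y$ with $U\cup V=X$. By Proposition~\ref{or3prop1}(a) the bundle $P$ is trivial on $X\setminus Y\supset V$, and one picks $\nabla_P$ trivial over~$V$. Now the key observation that replaces your Pontryagin/surgery step: since $\La^2T^*X$ has rank~$3$ and $Y$ is a $1$-complex, any two unit sections $\alpha^+,\alpha^-$ are isotopic over $Y$ (the fibre $\cS^2$ is simply connected), hence over~$U$. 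After deforming $\alpha^-$ one may assume $\alpha^+|_U=\alpha^-|_U$, so $J^+|_U=J^-|_U$. Theorem~\ref{or3thm2}(a) then gives a single trivialization of $\check O_P^{E_\bu}|_{[\nabla_P]}$ depending only on this common restriction, and Theorem~\ref{or3thm2}(b) applied once to $J^+$ and once to $J^-$ identifies it with $\check\omega_P^+|_{[\nabla_P]}$ and with $\check\omega_P^-|_{[\nabla_P]}$ respectively. That is the whole argument.

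Note also that even in your own setup (with $\alpha_0=\alpha_1$ on $U=X\setminus\overline B$ and $P$ trivial near $\overline B$), once you invoke Theorem~\ref{or3thm2}(a),(b) you are already done: the tensor-square observation about $\check O_{X\times G}^{E_\bu}|_{[\nabla^0]}$ and the excision-to-$\cS^3$ reduction are not needed. Theorem~\ref{or3thm2}(c) with $X^+=X^-$ and identical data gives only the identity; the content you want is in part~(b).
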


\begin{proof} For (a), at a point $x\in X$, choose an orthonormal basis $e_1,e_2,e_3$ for $T_x^*X$ with $\al\vert_x=e_1\w e_2$. Then define $\C$-vector space structures on $E_0\vert_x,E_1\vert_x$ by
\begin{equation*}
i\cdot 1=e_1\w e_2=\al\vert_x, \;\> i\cdot e_1\w e_3=e_2\w e_3, \;\> i\cdot e_1=e_2, \>\; i\cdot e_3=e_1\w e_2\w e_3.
\end{equation*}
It is easy to check that these are independent of the choice of $(e_1,e_2,e_3)$, so over all $x\in X$ they extend to complex vector bundle structures on $E_0,E_1$, and the symbol of $D$ is complex linear. Part (a) follows.

For (b), let $(X,g)$ be a compact Riemannian 3-manifold, $G$ a connected Lie group, and $P\ra X$ a principal $G$-bundle. By Proposition \ref{or3prop1}(a) with $d=2$ we can choose a 1-skeleton $Y\subset X$, a CW complex of dimension 1, such that $P\vert_{X\sm Y}$ is trivial. As in Step 3 of \S\ref{or33}, choose a small open neighbourhood $U$ of $Y$ in $X$ such that $U$ retracts onto $Y$, and an open $V\subset X$ with $\ov V\subseteq X\sm Y$ and $U\cup V=X$, and a connection $\nabla_P$ on $P\ra X$ which is trivial over $V\subset X\sm Y$, using the chosen trivialization of~$P\vert_{X\sm Y}$. 

Suppose $\al^+,\al^-\in \Ga^\iy(\La^2T^*X)$ are nonvanishing 2-forms with $\md{\al^\pm}_g=1$. In general, $\al^+,\al^-$ are not isotopic through nonvanishing 2-forms on $X$. However, $\al^+\vert_Y,\al^-\vert_Y$ are isotopic through nonvanishing sections of $\La^2T^*X\vert_Y$ over the 1-skeleton $Y$, as $\La^2T^*X$ has rank 3. As $U$ retracts onto $Y$, it follows that $\al^+\vert_U$ and $\al^-\vert_U$ are isotopic through nonvanishing sections of $\La^2T^*X\vert_U$. So after a continuous deformation of $\al^-$, we can suppose that~$\al^+\vert_U=\al^-\vert_U$.

Part (a) gives complex structures $J^\pm$ on $E_\bu$ coming from $\al^\pm$, with $J^+\vert_U=J^-\vert_U$ as $\al^+\vert_U=\al^-\vert_U$. Write $\check\om_P^+,\check\om_P^-$ for the n-orientations on $\B_P$ given by Theorem \ref{or2thm1} using $J^+,J^-$. Theorem \ref{or3thm2}(a) with $X,U,V,E_\bu,P,\nabla_P$ in place of $X^+,\ab U^+,\ab V^+,\ab P^+,\ab\nabla_{P^+}$ now gives an isomorphism $\check O_P^{E_\bu}\vert_{[\nabla_P]}\cong \Z_2$, depending only on the complex structure $J^+\vert_U=J^-\vert_U$ on $E_\bu\vert_U$. Theorem \ref{or3thm2}(b) for $J^+$ and $J^-$ implies that this agrees with $\check\om_P^+\vert_{[\nabla_P]}$ and $\check\om_P^-\vert_{[\nabla_P]}$, so $\check\om_P^+=\check\om_P^-$ as $\B_P$ is connected. This proves~(b).
\end{proof}

To pass from an n-orientation of $\B_P$ to an orientation of $\B_P$, by \eq{or1eq4} and \eq{or1eq7} we need to choose an orientation for $\det D$, noting that as $\ind D=0$ on any 3-manifold $X$ we do not need an orientation on $\g$. Since orientations on $\B_P$ induce orientations on $\ovB_P$, which restrict to orientations of $\M_P^{\rm fl}$, we deduce:

\begin{thm} Let\/ $(X,g)$ be a compact Riemannian $3$-manifold, and choose an orientation for $\det D$ (equivalently, an orientation on $\bigop_{k=0}^3H^k(X,\R)$). Then for any connected Lie group $G$ and principal\/ $G$-bundle $P\ra X$ we can construct a canonical orientation for the derived manifold\/~$\M_P^{\rm fl}$.
\label{or4thm3}	
\end{thm}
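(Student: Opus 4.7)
The plan is to combine the canonical n-orientation on $\B_P$ provided by Theorem \ref{or4thm2}(b) with the chosen orientation of $\det D$, using the relation between n-orientations and orientations from \eq{or1eq4} and \eq{or1eq7}. The construction proceeds in three short steps. First, since $G$ is connected, apply Theorem \ref{or4thm2}(b) to obtain a canonical n-orientation $\check\om_P:\check O_P^{E_\bu}\,\smash{\buildrel\cong\over\longra}\,\B_P\t\Z_2,$ independent of the auxiliary choice of nonvanishing $2$-form on $X$. By \eq{or1eq4} we have $\check O_P^{E_\bu}\cong O_P^{E_\bu}\ot_{\Z_2}O_{X\t G}^{E_\bu}\vert_{[\nabla^0]},$ and since $\Z_2$-torsors are self-dual this is equivalent to a canonical isomorphism $O_P^{E_\bu}\cong \check O_P^{E_\bu}\ot_{\Z_2}O_{X\t G}^{E_\bu}\vert_{[\nabla^0]}.$ Thus to produce an orientation on $\B_P$ it is enough to trivialize the $\Z_2$-torsor $O_{X\t G}^{E_\bu}\vert_{[\nabla^0]}.$

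Second, invoke \eq{or1eq7}: $O_{X\t G}^{E_\bu}\vert_{[\nabla^0]}\cong\Or(\det D)^{\ot^{\dim\g}}\ot_{\Z_2}\Or(\g)^{\ot^{\ind D}}.$ The essential observation in dimension three is that $\ind D=0.$ Indeed, by Hodge theory $\Ker D\cong H^0(X,\R)\op H^2(X,\R)$ and $\Coker D\cong H^1(X,\R)\op H^3(X,\R),$ so $\ind D=\chi(X),$ which vanishes because every closed odd-dimensional manifold has zero Euler characteristic. Therefore $\Or(\g)^{\ot^{\ind D}}$ is canonically trivial, no orientation on $\g$ is required, and in particular the $\Ad(G)$-invariance obstruction flagged in Remark \ref{or2rem3} never arises. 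The chosen orientation of $\det D$ trivializes $\Or(\det D),$ hence $\Or(\det D)^{\ot^{\dim\g}},$ giving a canonical trivialization of $O_{X\t G}^{E_\bu}\vert_{[\nabla^0]}.$ Tensoring with $\check\om_P$ now produces the desired canonical orientation $\om_P$ on $\B_P.$ The parenthetical equivalence between orienting $\det D$ and orienting $\bigop_{k=0}^3H^k(X,\R)$ follows from the Hodge identification $\det D\cong\det(H^0\op H^2)\ot\bigl(\det(H^1\op H^3)\bigr)^*.$

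Third, as in Remark \ref{or1rem1} and Example \ref{or4ex1}, the orientation $\om_P$ corresponds under $\Pi_P:\B_P\ra\ovB_P$ to an orientation on $\ovB_P,$ and along the inclusion $\io:\M_P^{\rm fl}\hookra\ovB_P$ the pullback $\io^*(\bar L_P^{E_\bu})$ is naturally identified with the determinant line of the four-term flat-connection deformation complex on the $3$-manifold (Remark \ref{or4rem1}(i),(iii)). Thus $\om_P$ pulls back to a canonical orientation on the derived manifold $\M_P^{\rm fl}.$ The main conceptual work has already been carried out in Theorem \ref{or4thm2}; the only substantive new ingredient here is the vanishing $\ind D=\chi(X)=0$ on a $3$-manifold, which removes the need to orient $\g$ and lets the chosen orientation of $\det D$ alone upgrade the canonical n-orientation to a canonical orientation. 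Consequently there is no serious obstacle in this step --- the proof is essentially an exercise in tracing the definitions \eq{or1eq4}--\eq{or1eq7} --- and the only point requiring minor care is ensuring that the tensor-product conventions used to define $\check O_P^{E_\bu}$ are applied consistently when recovering $O_P^{E_\bu}.$
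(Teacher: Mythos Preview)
Your proof is correct and follows essentially the same approach as the paper: the paper's argument (given in the paragraph immediately preceding the theorem) is simply that Theorem~\ref{or4thm2}(b) gives a canonical n-orientation on $\B_P$, that \eq{or1eq4}--\eq{or1eq7} convert this to an orientation once one orients $\det D$ (no orientation on $\g$ being needed since $\ind D=0$ on a 3-manifold), and that orientations on $\B_P$ induce orientations on $\ovB_P$ and hence on $\M_P^{\rm fl}$. You have spelled out the same steps with more detail, in particular justifying $\ind D=\chi(X)=0$ via Hodge theory and the vanishing of the Euler characteristic in odd dimensions.
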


Here is how this relates to results in the literature: when $X$ is oriented and $G=\SU(2)$, Taubes \cite[Prop.~2.1]{Taub1} shows $\B_P$ is orientable, and then uses the standard orientation to get canonical orientations as any $\SU(2)$-bundle $P\ra X$ is trivial, \cite[p.~554-5]{Taub1}. Boden and Herald \cite[\S 4]{BoHe} prove the analogue for $G=\SU(3)$. We believe Theorem \ref{or4thm3} may be new for non-orientable $X$, and also for non-simply-connected $G$, when $P\ra X$ need not be trivial, so standard orientations do not suffice to define canonical orientations.

\subsubsection{Anti-self-dual instantons on 4-manifolds}
\label{or423}

Let $(X,g)$ be a compact, oriented Riemannian 4-manifold, $G$ a Lie group, and $P\ra X$ a principal $G$-bundle. In Example \ref{or4ex1} we defined the moduli space $\M_P^{\rm asd}$ of irreducible anti-self-dual instantons on $P$, and explained that for generic $g$ it is a smooth manifold, whose orientation bundle is the pullback of $\bar O_P^{E_\bu}\ra\ovB_P$ from Definition \ref{or1def2} under the inclusion $\M_P^{\rm asd}\hookra\ovB_P$, for $E_\bu$ as in~\eq{or4eq2}. 

Instanton moduli spaces $\M_P^{\rm asd}$ are used to define Donaldson invariants of 4-manifolds $X$, as in \cite{Dona1,Dona2,DoKr}, which can distinguish different smooth structures on homeomorphic 4-manifolds $X_1,X_2$. Most work in the area focusses on $G=\SU(2)$ and $G=\SO(3)$, although Kronheimer \cite{Kron} extends the definition to $G=\SU(m)$ and $\mathop{\rm PSU}(m)$. Orientations on $\M_P^{\rm asd}$ are needed to determine the sign of the Donaldson invariants, and have been well studied. Many of the methods of \S\ref{or2}--\S\ref{or3} were first used in Donaldson theory. 

\begin{thm}
\label{or4thm4}
Let\/ $(X,g)$ be a compact, oriented Riemannian $4$-manifold, and\/ $D,E_\bu$ be as in~\eq{or4eq2}.
\begin{itemize}
\setlength{\itemsep}{0pt}
\setlength{\parsep}{0pt}
\item[{\bf(a)}] Suppose $J$ is an almost complex structure on $X$ which is Hermitian with respect to $g$ and compatible with the orientation. Then there are unique complex vector bundle structures on $E_0,E_1$ such that\/ $J$ acts on $E_1=T^*X$ by multiplication by $i,$ and the symbol of\/ $D$ is complex linear. 

Thus for any Lie group $G$ and principal\/ $G$-bundle $P\ra X,$ Theorem\/ {\rm\ref{or2thm1}} defines a canonical n-orientation on\/ $\B_P$.

\item[{\bf(b)}] Now let\/ $G$ be a connected Lie group, and choose a\/ $\Spinc$-structure $\mathfrak s$ on $X,$ noting that\/ $\Spinc$-structures exist for any $(X,g)$. Then for all principal\/ $G$-bundles $P\ra X,$ we can construct a canonical n-orientation on $\B_P$.

In the situation of\/ {\bf(a)\rm,} the almost complex structure $J$ induces a $\Spinc$-structure ${\mathfrak s}_J$ on $X,$ and the n-orientation on $\B_P$ from {\bf(a)} agrees with the n-orientation constructed above from the $\Spinc$-structure~${\mathfrak s}_J$.
\item[{\bf(c)}] If\/ $G$ is also simply-connected, or if\/ $G=\U(m),$ then the n-orientation on $\B_P$ in {\bf(b)} is independent of the choice of\/ $\Spinc$-structure~$\mathfrak s$.
\end{itemize}
\end{thm}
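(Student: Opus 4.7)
For part (a), I would construct the complex structures explicitly. The bundle $E_1=T^*X$ carries the complex structure $J.$ For $E_0=\La^0T^*X\op\La^2_+T^*X,$ I use the Hermitian decomposition $\La^2_+T^*X\cong\R\om\op{\rm Re}(\La^{2,0}T^*X),$ where $\om=g(J\cdot,\cdot)$ is the fundamental $(1,1)$-form of $(g,J),$ and put a complex structure on $E_0$ by declaring $i\cdot 1=\om$ on the rank-$2$ real summand $\R\op\R\om$ and taking the natural complex structure of $\La^{2,0}T^*X$ on the remaining summand. This yields complex isomorphisms $E_0\cong\La^{0,0}T^*X\op\La^{0,2}T^*X$ and $E_1\cong\La^{0,1}T^*X,$ under which $D=\d+\d^*_+$ agrees at the symbol level with $\sqrt{2}(\bar\partial+\bar\partial^*),$ manifestly complex linear. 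Uniqueness of the $E_0$-structure is pointwise: once $E_1|_x$ has the chosen complex structure and $\sigma(D)_{x,\xi}:E_0|_x\ra E_1|_x$ is an $\R$-linear isomorphism, there is at most one complex structure on $E_0|_x$ making $\sigma(D)_{x,\xi}$ complex linear, and the existence construction shows it is independent of $\xi.$ Theorem \ref{or2thm1} then supplies the canonical n-orientation on $\B_P.$

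For part (b), I would execute the excision strategy of \S\ref{or33}. Fix $[\nabla_P]\in\B_P.$ Since $G$ is connected, Proposition \ref{or3prop1}(a) (with $d=2$) provides a $2$-skeleton $Y\subset X$ and a trivialization $\Phi:P|_{X\sm Y}\cong(X\sm Y)\t G.$ Choose a neighborhood $U$ of $Y$ retracting onto $Y,$ an open $V\subset X$ with $\ov V\subset X\sm Y$ and $U\cup V=X,$ and a connection $\hat\nabla_P$ on $P$ trivial on $V$ via $\Phi.$ The obstruction to lifting $\mathfrak s|_U$ to a compatible almost complex structure $J$ on $U$ lies in $H^3(U,\Z)=H^3(Y,\Z)=0,$ since $Y$ is $2$-dimensional; such a $J$ exists on $U$ and induces a complex structure on $E_\bu|_U$ by part (a). Theorem \ref{or3thm2}(a) then produces a canonical trivialization $\check O_P^{E_\bu}|_{[\hat\nabla_P]}\cong\Z_2,$ which I transport to $[\nabla_P]$ by deforming continuously along a path of connections in $\A_P,$ as in Step 3 of \S\ref{or33}. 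Independence of the auxiliary data $(Y,\Phi,U,V,\hat\nabla_P)$ follows from Proposition \ref{or3prop1}(b) applied to $X\t[0,1]$ together with the functoriality statements of Theorem \ref{or3thm1}(i)--(ii), as in Step 4 of \S\ref{or33}, and independence of $J$ on $U$ follows from path-connectedness of the space of compatible almost complex structures on $U$ lifting $\mathfrak s|_U$ (the relevant obstruction lies in $H^3(U,\Z)=0$). When $\mathfrak s=\mathfrak s_J$ for a globally defined $J,$ Theorem \ref{or3thm2}(b) identifies the excision trivialization with the one from part (a), giving the stated compatibility.

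For part (c), I first treat connected, simply-connected $G.$ Proposition \ref{or3prop1}(a) now permits $d=4,$ so $Y$ can be taken to be the $0$-skeleton (a finite set of points) and $U$ a disjoint union of small contractible balls. On such $U,$ $H^2(U,\Z)=0,$ so any two $\Spinc$-structures $\mathfrak s,\mathfrak s'$ are isomorphic on $U,$ and a single almost complex structure $J$ on $U$ may be chosen to induce both $\mathfrak s|_U$ and $\mathfrak s'|_U$ up to this isomorphism. Theorem \ref{or3thm2}(a) therefore produces the same trivialization for $\mathfrak s$ and $\mathfrak s',$ and one checks via Theorem \ref{or3thm1}(ii)--(iii) that this $d=4$ construction agrees with the $d=2$ construction of part (b), by choosing compatible triangulations so that the $d=4$ data refines the $d=2$ data. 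For $G=\U(m),$ I transfer via the embedding $\U(m)\hookra\SU(m+1)$ of Example \ref{or2ex5}: apply part (b) and the simply-connected case of (c) to the associated $\SU(m+1)$-bundle $Q=(P\t\SU(m+1))/\U(m)$ to obtain an n-orientation on $\B_Q$ independent of $\mathfrak s,$ then pull back via the natural isomorphism $\xi_P^Q$ of \S\ref{or229}. A naturality check identifying this transferred n-orientation with the direct construction of (b) for $P$ completes the argument.

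The main obstacle is the suite of independence checks in part (b), in particular independence from the choice of $J$ on $U$ and from the excision data $(Y,\Phi,U,V,\hat\nabla_P);$ these rest squarely on the strong functoriality of Theorem \ref{or3thm1}(i)--(iii), which must be invoked with care when composing homotopies over $X\t[0,1].$ A secondary but delicate point is the identification in part (c) for $G=\U(m)$ of the directly constructed n-orientation with the one transferred from $\SU(m+1)$ via Example \ref{or2ex5}; this requires tracing how the excision isomorphism of Theorem \ref{or3thm1} commutes with the Lie-subgroup isomorphism $\xi_P^Q$ of \S\ref{or229}, and verifying the compatibility at the level of the complex structures on $E_\bu|_U$ induced by $J.$
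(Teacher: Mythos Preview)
Your proposal is correct and follows essentially the same route as the paper: part (a) via the identification of $E_\bu$ with $\bar\partial+\bar\partial^*$ on $\La^{0,\bullet}$, part (b) via the excision method of \S\ref{or33} with $d=2$, and part (c) by passing to $d=4$ for simply-connected $G$ and then reducing $\U(m)$ to $\SU(m+1)$ via Example~\ref{or2ex5}.

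The one substantive difference is in how you produce the almost complex structure $J$ on $U$ from $\mathfrak s$. The paper works concretely: compatible almost complex structures with $\mathfrak s_J\cong\mathfrak s$ correspond to nonvanishing sections of the rank-$4$ real bundle $S_+^{\mathfrak s}$, and such sections exist and are unique up to isotopy over the $2$-skeleton $Y$ because the fibre $S^3$ is $2$-connected. Your obstruction-theoretic phrasing (``the obstruction to lifting $\mathfrak s|_U$ lies in $H^3(U,\Z)$'') is slightly off for existence---the primary obstruction actually lives in $H^4$---though the conclusion is unaffected since $Y$ is $2$-dimensional. More importantly, the paper's concrete description pays off in the independence check: when interpolating over $X\times[0,1]$ via Proposition~\ref{or3prop1}(b), one must also interpolate the almost complex structure over the $3$-skeleton $Z$, and the paper does this by extending nonvanishing sections of $\pi_X^*(S_+^{\mathfrak s})$ from $(Y_0\times\{0\})\amalg(Y_1\times\{1\})$ over $Z$, which succeeds because $\dim Z=3<4=\rank S_+^{\mathfrak s}$. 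You correctly flag this interpolation as the main obstacle but do not quite carry it out; the paper's section-of-$S_+^{\mathfrak s}$ viewpoint makes this step transparent.
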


\begin{proof} For (a), given $J$ we can put complex structures on $E_0,E_1$, such that the symbol of $E_\bu$ agrees with that of $\bar\partial+\bar\partial^*:\Ga^\iy(\La^{0,0}\op\La^{0,2})\ra\Ga^\iy(\La^{0,1})$, which is complex linear. So (a) holds by Theorem \ref{or2thm1}.

For (b), choose $G,{\mathfrak s},P\ra X$ as in the theorem. Let $\nabla_P$ be any connection on $P$. Proposition \ref{or3prop1}(a) with $d=2$ gives a 2-skeleton $Y\subset X$, which is a CW-complex of dimension 2, and a trivialization $\Phi:P\vert_{X\sm Y}\ra (X\sm Y)\t G$. As in Step 3 of \S\ref{or33}, choose a small open neighbourhood $U$ of $Y$ in $X$ such that $U$ retracts onto $Y$, an open $V\subset X$ with $\ov V\subseteq X\sm Y$ and $U\cup V=X$, and a connection $\hat\nabla_P$ on $P\ra X$ which is trivial over $V\subset X\sm Y$, using the chosen trivialization $\Phi$ of $P\vert_{X\sm Y}$. 

Recall from \cite{Morg,Nico} that a $\Spinc$-structure $\mathfrak s$ on $(X,g)$ consists of rank 2 Hermitian vector bundles $S_\pm^{\mathfrak s}\ra X$ and a Clifford multiplication map $T^*X\ra \Hom_\C(S_+^{\mathfrak s},S_-^{\mathfrak s})$. They are related to almost complex structures in the following way. The projective space bundle ${\mathbb P}_\C(S_+^{\mathfrak s})\ra X$ is naturally isomorphic to the bundle of oriented Hermitian almost complex structures $J$ on $(X,g)$, as used in (a). Hence, oriented Hermitian almost complex structures $J$ on $X$ correspond to complex line subbundles $L_J\subset S_+^{\mathfrak s}$. Such a $J$ determines a $\Spinc$-structure ${\mathfrak s}_J$, unique up to isomorphism, for which $L_J$ is the trivial line bundle. Thus, if $S_+^{\mathfrak s}$ has a nonvanishing section $\si\in\Ga^\iy(S_+^{\mathfrak s})$, this determines a unique oriented Hermitian almost complex structure $J$ on $(X,g)$, and~${\mathfrak s}\cong{\mathfrak s}_J$.

For general $X$ there need not exist nonvanishing sections $\si\in\Ga^\iy(S_+^{\mathfrak s})$, and if they exist they need not be unique up to isotopy. But $S_+^{\mathfrak s}\vert_Y$ has nonvanishing sections which are unique up to isotopy over the 2-skeleton $Y$, as $S_+^{\mathfrak s}$ has real rank $4>2+1$. Since $U$ retracts onto $Y$, it follows that $S_+^{\mathfrak s}\vert_U$ has nonvanishing sections $\si\in\Ga^\iy(S_+^{\mathfrak s}\vert_U)$ which are unique up to isotopy. These yield oriented Hermitian almost complex structures $J_\si$ on $(U,g\vert_U)$, which are unique up to isotopy, and determine ${\mathfrak s}\vert_U$ up to isomorphism.

Apply Theorem \ref{or3thm2}(a) with $X,U,V,E_\bu,P,\hat\nabla_P,\Phi\vert_V$ in place of $X^+,\ab U^+,\ab V^+,\ab E^+_\bu,\ab P^+,\ab \nabla_{P^+},\ab \tau^+$, and using the complex structure on $E_\bu\vert_U$ induced by $J_\si$ for some nonvanishing $\si\in\Ga^\iy(S_+^{\mathfrak s}\vert_U)$. This gives an isomorphism 
\e
\smash{\check O_P^{E_\bu}\big\vert_{[\hat\nabla_P]}\cong \Z_2.}
\label{or4eq4}
\e
We then determine the isomorphism $\check O_P^{E_\bu}\vert_{[\nabla_P]}\cong\Z_2$ by choosing a smooth path from $\hat\nabla_P$ to $\nabla_P$ in the (contractible) space of connections on $P\ra X$, and deforming the n-orientation continuously along this path. This constructs an n-orientation at any point $[\nabla_P]$ in~$\B_P$.

We will show this n-orientation at $[\nabla_P]$ is independent of choices in its construction, following Step 4 of \S\ref{or33}. Suppose that $Y_0,\Phi_0,U_0,V_0,\hat\nabla_{P,0},\si_0$ and $Y_1,\Phi_1,U_1,V_1,\hat\nabla_{P,1},\si_1$ are alternative choices above. Then Proposition \ref{or3prop1}(b) with $d=2$ gives a 3-skeleton $Z\subset X\t[0,1]$ interpolating between $Y_0\t\{0\}$ and $Y_1\t\{1\}$, and a trivialization $\Psi:\pi_X^*(P)\vert_{(X\t[0,1])\sm Z}\ra ((X\t[0,1])\sm Z)\t G$ interpolating between $\Phi_0$ and $\Phi_1$. Choose a small open neighbourhood $W$ of $Z$ in $X\t[0,1]$ which interpolates between $U_0$ and $U_1$ and retracts onto~$Z$. 

As $Z$ is a CW-complex of dimension 3, there exist nonvanishing sections of the rank 4 bundle $\pi_X^*(S_+^{\mathfrak s})\vert_Z$, and we can choose them to interpolate between given nonvanishing sections on $Y_0$ and $Y_1$. Since $W$ retracts onto $Z$, it follows that there exist nonvanishing sections $\tau$ of $\pi_X^*(S_+^{\mathfrak s})\vert_W$, and we can choose $\tau$ with $\tau\vert_{Y_i\t\{i\}}=\si_i$ for~$i=0,1$.

We can now choose data $\Phi_t,U_t,V_t,\hat\nabla_{P,t},\si_t$ as above, depending smoothly on $t\in[0,1]$, and interpolating between $\Phi_0,U_0,V_0,\hat\nabla_{P,0},\si_0$ and $\Phi_1,U_1,V_1,\hat\nabla_{P,1},\si_1$, where $\Phi_t(x)=\Psi(x,t)$ for $(x,t)\in(X\t[0,1])\sm Z$, and $U_t=\bigl\{x\in X:(x,t)\in W\bigr\}$, and $\si_t(x)=\tau(x,t)$ for $x\in U_t$. So as in \eq{or4eq4} we get isomorphisms for $t\in[0,1]$
\e
\check O_P^{E_\bu}\big\vert_{[\hat\nabla_{P,t}]}\cong \Z_2,
\label{or4eq5}
\e
which depend continuously on $t\in[0,1]$ by strong functoriality in Theorem \ref{or3thm2}(a). We choose a smooth path from $\hat\nabla_{P,t}$ to $\nabla_P$ in the (contractible) space of connections on $P\ra X$, depending smoothly on $t\in[0,1]$, and interpolating between the previous choices when $t=0$ and~$t=1$. 

Deforming the trivialization of $\check O_P^{E_\bu}\vert_{[\hat\nabla_{P,t}]}$ along this path gives a continuous family of trivializations of $\check O_{P}^{E_\bu}\vert_{[\nabla_{P}]}$ for $t\in[0,1]$ which interpolate between the two previous choices at $t=0$ and $t=1$. Hence the trivialization of $\check O_{P}^{E_\bu}\vert_{[\nabla_{P}]}$ defined in the first part of the proof is independent of choices in its construction, and is well defined. These trivializations clearly depend continuously on $[\nabla_P]$ in $\B_P$, and so define a canonical n-orientation on $\B_P$, for all principal $G$-bundles $P\ra X$. This completes the first part of~(b).

For the second part, let $J$ be an almost complex structure on $X$, inducing a $\Spinc$-structure ${\mathfrak s}_J$. Then in the definition of the orientation on $\B_P$ above, we can take $J_\si=J\vert_U$. So by Theorem \ref{or3thm2}(b), the isomorphism \eq{or4eq4} agrees with natural n-orientation defined by Theorem \ref{or2thm1} at $[\hat\nabla_P]$, so deforming along the path from $[\hat\nabla_P]$ to $[\nabla_P]$, part (b) follows.

For (c), if $G$ is simply-connected, then in the proof above we can apply Proposition \ref{or3prop1}(a) with $d=4$ instead of $d=2$. So $Y\subset X$ is a 0-skeleton, and the almost complex structure $J_\si$ on $U$ is unique up to isotopy, and independent of $\mathfrak s$. Thus the orientation on $\B_P$ is independent of $\mathfrak s$. The case $G=\U(m)$ follows from Example \ref{or2ex5} and $G=\SU(m+1)$, which is simply-connected.
\end{proof}

As in Definition \ref{or1def2}, we can convert n-orientations on $\B_P$ to orientations on $\B_P$ by choosing orientations on $\det D$ and $\g$. Since orientations on $\B_P$ induce orientations on $\ovB_P$, which pull back to orientations of $\M_P^{\rm asd}$, we deduce:

\begin{thm}
\label{or4thm5}
Let\/ $(X,g)$ be a compact, oriented Riemannian\/ $4$-manifold.
\begin{itemize}
\setlength{\itemsep}{0pt}
\setlength{\parsep}{0pt}
\item[{\bf(a)}] Let\/ $G$ be a connected Lie group, and choose an orientation on\/ $\det D$ (equivalently, an orientation on $H^0(X)\op H^1(X)\op H^2_+(X)$) and on\/ $\g,$ and a\/ $\Spinc$-structure $\mathfrak s$ on $X$. Then for all principal\/ $G$-bundles $P\ra X,$ we can construct a canonical orientation on $\M_P^{\rm asd}$.
\item[{\bf(b)}] If\/ $G$ is also simply-connected, or if\/ $G=\U(m),$ then the orientation on $\M_P^{\rm asd}$ in {\bf(a)} is independent of the choice of\/ $\Spinc$-structure~$\mathfrak s$.
\end{itemize}	
\end{thm}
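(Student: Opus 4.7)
The plan is to deduce Theorem \ref{or4thm5} directly from Theorem \ref{or4thm4} by converting n-orientations to orientations, and then restricting to the gauge-theoretic moduli space.

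\textbf{Step 1 (n-orientation on $\B_P$).} First, by Theorem \ref{or4thm4}(b), the data $(X,g,G,P,{\mathfrak s})$ determines a canonical n-orientation $\check\om_P:\check O_P^{E_\bu}\,{\buildrel\cong\over\longra}\,\B_P\t\Z_2$ on $\B_P$, where $E_\bu$ is the anti-self-dual operator \eq{or4eq2}. For part (b), if $G$ is simply-connected or $G=\U(m)$, then Theorem \ref{or4thm4}(c) gives that $\check\om_P$ does not depend on $\mathfrak s$.

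\textbf{Step 2 (passing from n-orientation to orientation on $\B_P$).} To promote $\check\om_P$ to an honest orientation $\om_P:O_P^{E_\bu}\,{\buildrel\cong\over\longra}\,\B_P\t\Z_2$, note that by \eq{or1eq4} we have $\check O_P^{E_\bu}=O_P^{E_\bu}\ot_{\Z_2} O_{X\t G}^{E_\bu}\vert_{[\nabla^0]}$, and by \eq{or1eq7}
\begin{equation*}
O_{X\t G}^{E_\bu}\vert_{[\nabla^0]}\cong \Or(\det D)^{\ot^{\dim\g}}\ot_{\Z_2}\Or(\g)^{\ot^{\ind D}}.
\end{equation*}
A chosen orientation on $\det D$ and an orientation on $\g$ thus trivialize $O_{X\t G}^{E_\bu}\vert_{[\nabla^0]}$, so combining with $\check\om_P$ yields $\om_P$. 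Here we use crucially that $G$ is connected (hypothesis of (a)) so that, as noted in Remark \ref{or2rem3}, the adjoint action of $G$ on $\g$ preserves any chosen orientation on $\g$, which is needed for the trivialization to be compatible with the $\G_P$-action and hence to descend well to $\B_P$.

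\textbf{Step 3 (transfer to $\ovB_P$ and restriction to $\M_P^{\rm asd}$).} As explained in Definition \ref{or1def2}, the natural projection $\Pi_P:\B_P\ra\ovB_P$ gives a canonical bijection between orientations of $\B_P$ and orientations of $\ovB_P$, so $\om_P$ corresponds to an orientation $\bar\om_P$ on $\ovB_P$. Finally, as reviewed in Example \ref{or4ex1}, for the elliptic complex \eq{or4eq1} governing the deformation theory of anti-self-dual instantons, the inclusion $\iota:\M_P^{\rm asd}\hookra\ovB_P$ satisfies $\iota^*(\bar L_P^{E_\bu})\cong\det T^*\M_P^{\rm asd}$ (on the smooth locus, using Remark \ref{or2rem2}(ii) to turn the three-term complex into the single operator \eq{or4eq2}). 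Therefore $\iota^*(\bar O_P^{E_\bu})$ is the orientation bundle of $\M_P^{\rm asd}$ in the usual sense, and the pullback $\iota^*(\bar\om_P)$ is the desired canonical orientation on $\M_P^{\rm asd}$.

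\textbf{Step 4 (part (b)).} For part (b), by Theorem \ref{or4thm4}(c) the n-orientation $\check\om_P$ does not depend on $\mathfrak s$, and Steps 2--3 involve no further choice of $\Spinc$-structure, so the resulting orientation on $\M_P^{\rm asd}$ is independent of $\mathfrak s$. Since the construction is entirely a matter of assembling previously established pieces, no real obstacle is expected; the only subtlety to verify carefully is the compatibility with the $\G_P$-action in Step 2 when passing from $\check O_P^{E_\bu}$ to $O_P^{E_\bu}$, which is exactly why connectedness of $G$ is imposed.
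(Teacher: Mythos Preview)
Your proof is correct and follows essentially the same approach as the paper: the paper deduces Theorem~\ref{or4thm5} in one sentence from Theorem~\ref{or4thm4} by converting n-orientations on $\B_P$ to orientations (via choices of orientations on $\det D$ and $\g$), passing to $\ovB_P$, and restricting to $\M_P^{\rm asd}$. Your Steps~1--4 spell out exactly this argument in more detail, with the added (correct) remark about why connectedness of $G$ is needed for the passage from $\check O_P^{E_\bu}$ to $O_P^{E_\bu}$.
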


Here is how this relates to results in the literature: part (b) is proved for $G=\SU(m)$ and $X$ simply-connected by Donaldson \cite[II.4]{Dona1} using the methods of Lemma \ref{or2lem1} and \S\ref{or24}, for $G=\U(m),\SU(m)$ and $X$ arbitrary by Donaldson \cite[\S 3(d)]{Dona2} using the method of \S\ref{or3}, and for $X$ simply-connected and $G$ a simply-connected, simple Lie group by Donaldson and Kronheimer \cite[\S 5.4]{DoKr}. So far as the authors know, part (a) is new, both the orientability of $\B_P,\M_P^{\rm asd}$, and the use of $\Spinc$-structures in constructing canonical orientations.

\subsubsection{Seiberg--Witten theory on 4-manifolds}
\label{or424}

We will explain the Seiberg--Witten $\U(m)$-monopole equations, following Zentner \cite{Zent}. Most of the literature on Seiberg--Witten theory (see e.g.\ Morgan \cite{Morg} and Nicolaescu \cite{Nico}) is concerned with the case $m=1$ of these. Let $(X,g)$ be a compact, oriented Riemannian 4-manifold. Fix a $\Spinc$-{\it structure\/} $\mathfrak s$ on $(X,g)$, consisting of rank 2 Hermitian vector bundles $S_\pm^{\mathfrak s}\ra X$ with a given connection $\nabla_{S_+^{\mathfrak s}}$ on $S_+^{\mathfrak s}$, and a Clifford multiplication map~$T^*X\ra \Hom_\C(S_+^{\mathfrak s},S_-^{\mathfrak s})$. 

Let $P\ra X$ be a principal $\U(m)$-bundle, and $E=(P\t\C^m)/\U(m)$ the associated rank $m$ complex vector bundle $E\ra X$. A {\it Seiberg--Witten $\U(m)$-monopole\/} on $P$ is a pair $(\nabla_P,\Psi)$ of a connection $\nabla_P$ on $P\ra X$, and a section $\Psi\in \Ga^\iy(S_+^{\mathfrak s}\ot_\C E)$, satisfying the {\it Seiberg--Witten equations\/}
\e 
\slashed{D}{}^{\nabla_P}\Psi=0 , \qquad \smash{F^{\nabla_P}_+} = q(\Psi), 
\label{or4eq6}
\e 
where $\slashed{D}{}^{\nabla_P}:\Ga^\iy(S_+^{\mathfrak s}\ot_\C E)\ra \Ga^\iy(S_-^{\mathfrak s}\ot_\C E)$ is a twisted Dirac operator defined using $\nabla_{S_+^{\mathfrak s}}$ and $\nabla_P$, and $q:S_+^{\mathfrak s}\ot_\C E\ra \Ad(P)\ot\La^2_+T^*X$ is a certain real quadratic bundle morphism.

The gauge group $\G_P=\Aut(P)$ acts on the family of Seiberg--Witten mono\-poles $(\nabla_P,\Psi)$. We call $(\nabla_P,\Psi)$ {\it irreducible\/} if its stabilizer group in $\G_P$ is trivial. Define $\M_{P,\mathfrak s}^{\rm SW}$ to be the moduli space of gauge equivalence classes $[\nabla_P,\Psi]$ of irreducible $\U(m)$-monopoles $(\nabla_P,\Psi)$. If $g,\nabla_{S_+^{\mathfrak s}}$ are generic then $\M_{P,\mathfrak s}^{\rm SW}$ is a smooth manifold. There is a forgetful map $\M_{P,\mathfrak s}^{\rm SW}\ra\B_P$ taking $[\nabla_P,\Psi]\mapsto[\nabla_P]$, for $\B_P$ as in Definition \ref{or1def1}. The moduli spaces $\M_{P,\mathfrak s}^{\rm SW}$ for $m=1$ (with orientations as below) are used to define the {\it Seiberg--Witten invariants\/} of $X$, \cite{Morg,Nico}, which are closely related to Donaldson invariants in \S\ref{or423}, but are technically less demanding.

To understand orientations for $\M_{P,\mathfrak s}^{\rm SW}$, observe that the linearization of the Seiberg--Witten equations \eq{or4eq6} is isotopic to the direct sum of the Atiyah--Hitchin--Singer complex \eq{or4eq1}, and $\slashed{D}{}^{\nabla_P}:\Ga^\iy(S_+^{\mathfrak s}\ot_\C E)\ra \Ga^\iy(S_-^{\mathfrak s}\ot_\C E)$. Hence the orientation bundle of $\M_{P,\mathfrak s}^{\rm SW}$ is the tensor product of orientation bundles from these two factors. The first is the pullback of the orientation bundle $O_P^{E_\bu}\ra\B_P$ for anti-self-dual instantons in \S\ref{or423} under the forgetful map $\M_{P,\mathfrak s}^{\rm SW}\ra\B_P$. The second is trivial by Theorem \ref{or2thm1}, as the symbol of $\slashed{D}{}^{\nabla_P}$ is complex linear. Thus, by Theorem \ref{or4thm4}(c), if we choose an orientation on $\det D$ (equivalently, an orientation on $H^0(X)\op H^1(X)\op H^2_+(X)$), we have canonical orientations on all moduli spaces $\M_{P,\mathfrak s}^{\rm SW}$. When $m=1$ this is proved by Morgan~\cite[Cor.~6.6.3]{Morg}.

\subsubsection{The Vafa--Witten equations on 4-manifolds}
\label{or425}

Let $(X,g)$ be a compact, oriented Riemannian 4-manifold, $G$ a Lie group, and $P\ra X$ a principal $G$-bundle. The {\it Vafa--Witten equations\/} concern triples $(\nabla_P,\al,\be)$ of a connection $\nabla_P$ on $P$ and sections $\al\in\Ga^\iy(\Ad(P)))$ and $\be\in\Ga^\iy(\Ad(P)\ot\La^2_+T^*X)$ satisfying
\begin{gather*}
\d_{\nabla_P} \al + \d_{\nabla_P}^{*} \be = 0 , \qquad
F^{\nabla_P}_++\ts\frac{1}{2}[\be,\al]+\frac{1}{8}[\be.\be]=0, 
\end{gather*}
where $[\be.\be]\in\Ga^\iy(\Ad(P)\ot\La^2_+T^*X)$. They were introduced by Vafa and Witten \cite{VaWi} in the study of the $S$-duality conjecture in $\mathcal{N}=4$ super Yang--Mills theory. See Mares \cite[\S A.1.6]{Mare} or Tanaka \cite[\S 2]{Tana3} for more details.  

The gauge group $\G_P=\Aut(P)$ acts on the family of Vafa--Witten solutions $(\nabla_P,\al,\be)$. We call $(\nabla_P,\al,\be)$ {\it irreducible\/} if its stabilizer group in $\G_P$ is $Z(G)$. Define $\M_P^{\rm VW}$ to be the moduli space of gauge equivalence classes $[\nabla_P,\al,\be]$ of irreducible Vafa--Witten solutions $(\nabla_P,\al,\be)$. It is a derived manifold. There is a forgetful map $\M_P^{\rm VW}\ra\ovB_P$ taking $[\nabla_P,\al,\be]\mapsto[\nabla_P]$, for $\ovB_P$ as in Definition~\ref{or1def1}. 

The deformations of $[\nabla_P,\al,\be]$ in $\M_P^{\rm VW}$ are controlled by the elliptic complex 
\begin{equation*} 
\xymatrix@!0@C=34.5pt@R=40pt{ 0 \ar[rr] &&
\Ga^{\iy} \bigl( \Ad(P) \!\ot\! \La^0T^*X \bigr) \ar[rrrrr]^(0.36){(\d_{\nabla_P}\, 0\, 0)} &&&&& \Ga^{\iy} \bigl( \Ad(P) \!\ot\! (\La^1T^*X\!\op\!\La^0T^*X\!\op\!\La^2_+T^*X) \bigr) \\
\ar[rrrrrr]^(0.28){\begin{pmatrix} 0 & \d_{\nabla_P} & \d_{\nabla_P}^* \\ \d_{\nabla_P} & 0 & 0 \end{pmatrix}} &&&&&&
\Ga^{\iy} ( \Ad(P) \ot (\La^1T^*X\op\La^2_+T^*X)  ) \ar[rrr] &&& 0, } 
\end{equation*}
modulo degree 0 operators. As in \S\ref{or421}--\S\ref{or424}, the orientation bundle of $\M_P^{\rm VW}$ is the pullback of the orientation bundle $\bar O_P^{E_\bu}\ra\ovB_P$ under the forgetful map $\M_P^{\rm VW}\ra\ovB_P$, where $E_\bu$ is the elliptic operator
\begin{equation*} 
\begin{pmatrix} \d & \d^* & 0 \\
0 & 0 & \d^* \\ 0 & 0 & \d_+ \end{pmatrix}\begin{aligned}[h]: \Ga^{\iy} \bigl( \La^0T^*X\op\La^2_+T^*X\op\La^1T^*X\bigr)\longra &\\
\Ga^{\iy} \bigl(\La^1T^*X\op\La^0T^*X\op\La^2_+T^*X\bigr)&. \end{aligned}
\end{equation*}
Then $E_\bu=\ti E_\bu\op\ti E_\bu^*$, for $\ti E_\bu$ as in \eq{or4eq2}. Therefore \S\ref{or225} shows that $O_P^{E_\bu}$ and hence $\bar O_P^{E_\bu}$ are canonically trivial. This proves:

\begin{thm} The Vafa--Witten moduli spaces $\M_P^{\rm VW}$ have canonical orientations for all\/ $G$ and\/~$P\ra X$.
\label{or4thm6}	
\end{thm}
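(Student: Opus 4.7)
The plan is to follow the outline already given in the paragraph immediately preceding the theorem statement, turning it into a short but complete argument. First I would identify the linearized deformation complex for an irreducible Vafa--Witten solution $[\nabla_P,\al,\be]$, which consists of the standard gauge-fixing term $\d_{\nabla_P}:\Ga^\iy(\Ad(P))\to\Ga^\iy(\Ad(P)\ot\La^1T^*X)$ coming from the $\G_P$-action, together with the linearization of the two Vafa--Witten equations. Since orientation bundles depend only on the principal symbol up to continuous deformation (Remark~\ref{or2rem2}(i)), I would drop all zeroth-order terms ($[\be,\al]$, $[\be.\be]$, and the action of $\rho$ on $\Ad(P)$) and observe that, at the symbol level, the resulting twisted operator is the twist by $(\Ad(P),\nabla_{\Ad(P)})$ of the untwisted elliptic operator
\begin{equation*}
E_\bu=\begin{pmatrix}\d & \d^* & 0\\ 0 & 0 & \d^*\\ 0 & 0 & \d_+\end{pmatrix}:\Ga^\iy(\La^0\op\La^2_+\op\La^1)T^*X\longra\Ga^\iy(\La^1\op\La^0\op\La^2_+)T^*X
\end{equation*}
displayed in the excerpt. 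Consequently, as in the previous examples (\S\ref{or421}--\S\ref{or424}), the orientation bundle of the derived manifold $\M_P^{\rm VW}$ is the pullback of $\bar O_P^{E_\bu}\ra\ovB_P$ under the forgetful morphism $[\nabla_P,\al,\be]\mapsto[\nabla_P]$, so it suffices to trivialize~$\bar O_P^{E_\bu}$ canonically.

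Second, I would make the key structural observation: with $\tilde E_\bu=(\La^0\op\La^2_+,\La^1,\d+\d^*_+)$ the instanton operator of~\eq{or4eq2}, one has a direct sum decomposition $E_\bu\cong\tilde E_\bu\op\tilde E_\bu^*$ of elliptic operators, where $\tilde E_\bu^*$ denotes the formal adjoint with respect to the Riemannian metric $g$ on $X$ and the induced fibrewise metrics on the form bundles. Indeed, the third row of the matrix defining $E_\bu$ is precisely $\tilde E_\bu^*:\Ga^\iy(\La^1T^*X)\to\Ga^\iy(\La^0T^*X\op\La^2_+T^*X)$, while the first two rows assemble into $\tilde E_\bu:\Ga^\iy(\La^0\op\La^2_+)T^*X\to\Ga^\iy(\La^1T^*X)$, after re-ordering summands.

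Third, I would invoke the result of \S\ref{or225}: whenever $E_\bu=\tilde E_\bu\op\tilde E_\bu^*$ (even only at the symbol level, up to continuous isotopy), there are canonical isomorphisms
\begin{equation*}
L_P^{E_\bu}\cong L_P^{\tilde E_\bu}\ot_\R (L_P^{\tilde E_\bu})^*\cong\B_P\t\R,\qquad O_P^{E_\bu}\cong O_P^{\tilde E_\bu}\ot_{\Z_2}(O_P^{\tilde E_\bu})^*\cong \B_P\t\Z_2,
\end{equation*}
and likewise for the $\ovB_P$-versions. The canonical trivialization is the obvious pairing of a line with its dual, which is manifestly $\G_P$- and $(\G_P/Z(G))$-equivariant, so descends from $\A_P$ to both $\B_P$ and $\ovB_P$. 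Pulling back along $\M_P^{\rm VW}\hookra\ovB_P$ yields the desired canonical orientation on the derived manifold~$\M_P^{\rm VW}$, for arbitrary $G$ and arbitrary principal $G$-bundle $P\ra X$.

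Since every step reduces to a direct appeal to the already-established machinery, there is no real obstacle here; the only thing requiring a small verification is the identification $E_\bu\cong\tilde E_\bu\op\tilde E_\bu^*$ up to continuous isotopy of symbols, which is a straightforward inspection of the matrix above. In particular, no choices of auxiliary data (orientations on $\g$, on $\det D$, $\Spinc$-structures, connectedness hypotheses on $G$, etc.) are needed, in contrast with Theorems \ref{or4thm3}--\ref{or4thm5}, because the trivialization here is purely algebraic and pointwise on~$\B_P$.
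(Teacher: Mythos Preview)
Your proposal is correct and follows exactly the same route as the paper: identify the governing elliptic operator $E_\bu$ as the displayed block matrix, recognize it as $\tilde E_\bu\oplus\tilde E_\bu^*$ with $\tilde E_\bu$ the anti-self-dual operator of~\eq{or4eq2}, and invoke \S\ref{or225} to conclude that $O_P^{E_\bu}$ and hence $\bar O_P^{E_\bu}$ are canonically trivial. There is nothing to add or correct.
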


The authors believe Theorem \ref{or4thm6} is new.

\subsubsection{The Kapustin--Witten equations on 4-manifolds}
\label{or426}

Let $(X,g)$ be a compact, oriented Riemannian 4-manifold, $G$ a Lie group, and $P\ra X$ a principal $G$-bundle. The {\it Kapustin--Witten equations\/} concern pairs $(\nabla_P,\phi)$ of a connection $\nabla_P$ on $P$ and a section $\phi\in\Ga^\iy(\Ad(P)\ot T^*X)$ satisfying
\begin{gather*}
\d_{\nabla_P}^* \phi = 0 \quad \text{in }\Ga^\iy(\Ad(P)), \qquad  \d_{\nabla_P}^{-} \phi = 0 \quad\text{in }\Ga^\iy(\Ad(P)\ot\La^2_-T^*X),\\
\text{and}\qquad F^{\nabla_P}_{+} + [\phi,\phi]_{+} = 0 \quad\text{in }\Ga^\iy(\Ad(P)\ot\La^2_+T^*X). 
\end{gather*}
They were introduced by Kapustin and Witten in \cite[\S 3.3]{KaWi}, coming from topologically twisted $\mathcal{N}=4$ super Yang--Mills theory, and are studied in Gagliardo and Uhlenbeck \cite{GaUh}, Taubes \cite{Taub2}, and Tanaka~\cite{Tana4}.

The gauge group $\G_P=\Aut(P)$ acts on the family of Kapustin--Witten solutions $(\nabla_P,\phi)$. We call $(\nabla_P,\phi)$ {\it irreducible\/} if its stabilizer group in $\G_P$ is $Z(G)$. Define $\M_P^{\rm KW}$ to be the moduli space of gauge equivalence classes $[\nabla_P,\phi]$ of irreducible Kapustin--Witten solutions $(\nabla_P,\phi)$, as a derived manifold. There is a forgetful map $\M_P^{\rm KW}\ra\ovB_P$ taking $[\nabla_P,\phi]\mapsto[\nabla_P]$, for $\ovB_P$ as in Definition~\ref{or1def1}. 

The deformations of $[\nabla_P,\phi]$ in $\M_P^{\rm KW}$ are controlled by the elliptic complex 
\begin{equation*} 
\xymatrix@!0@C=34.5pt@R=50pt{ 0 \ar[rr] &&
\Ga^{\iy} \bigl( \Ad(P) \!\ot\! \La^0T^*X \bigr) \ar[rrrrr]^(0.42){(\d_{\nabla_P}\, 0)} &&&&& \Ga^{\iy} \bigl( \Ad(P) \!\ot\! (\La^1T^*X\!\op\!\La^1T^*X) \bigr) \\
\ar[rrrrr]^(0.24){\begin{pmatrix} 0 & \d^*_{\nabla_P}  \\ \d^+_{\nabla_P} & 0 \\ 0 & \d^-_{\nabla_P} \end{pmatrix}} &&&&&
\Ga^{\iy} ( \Ad(P) \ot (\La^0T^*X\op\La^2_+T^*X\op\La^2_-T^*X) ) \ar[rrrr] &&&& 0, } 
\end{equation*}
modulo degree 0 operators. As in \S\ref{or421}--\S\ref{or425}, the orientation bundle of $\M_P^{\rm KW}$ is the pullback of the orientation bundle $\bar O_P^{E_\bu}\ra\ovB_P$ under the forgetful map $\M_P^{\rm KW}\ra\ovB_P$, where $E_\bu$ is the elliptic operator
\begin{equation*} 
\begin{pmatrix} 0 & \d_+ & 0 & \d^* \\
\d^* & 0 & \d_- & 0 
\end{pmatrix}
\begin{aligned}[h]:\, &\Ga^{\iy} \bigl( \La^0T^*X\!\op\!\La^2_+T^*X\!\op\!\La^2_-T^*X\op\La^0T^*X\bigr)\\
&\qquad\longra \Ga^{\iy} \bigl(\La^1T^*X\op\La^1T^*X\bigr). \end{aligned}
\end{equation*}
This is a direct sum $E_\bu=E^+_\bu\op E^-_\bu$, where $E^+_\bu$ is as in \eq{or4eq2}, and $E^-_\bu$ is $E^+_\bu$ for the opposite orientation on $X$. Thus $\bar O_P^{E_\bu}\cong\bar O_P^{E^+_\bu}\ot_{\Z_2}\bar O_P^{E^-_\bu}$. As Theorem \ref{or4thm4} describes orientations for $O_P^{E^\pm_\bu}$, and hence $\bar O_P^{E^\pm_\bu}$, we deduce:

\begin{thm}
\label{or4thm7}
Let\/ $(X,g)$ be a compact, oriented Riemannian\/ $4$-manifold.
\begin{itemize}
\setlength{\itemsep}{0pt}
\setlength{\parsep}{0pt}
\item[{\bf(a)}] Let\/ $G$ be a connected Lie group, and choose an orientation on\/ $\det D$ (equivalently, an orientation on $H^2(X)$) and on\/ $\g,$ and a\/ $\Spinc$-structure $\mathfrak s$ on $X$. Then for all principal\/ $G$-bundles $P\ra X,$ we can construct a canonical orientation on $\M_P^{\rm KW},$ as a derived manifold.
\item[{\bf(b)}] If\/ $G$ is also simply-connected, or if\/ $G=\U(m),$ then the orientation on $\M_P^{\rm KW}$ in {\bf(a)} is independent of the choice of\/ $\Spinc$-structure~$\mathfrak s$.
\end{itemize}
\end{thm}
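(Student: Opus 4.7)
The plan is to exploit the splitting $E_\bu = E^+_\bu \oplus E^-_\bu$ noted just before the theorem statement, where $E^+_\bu$ is the instanton operator \eqref{or4eq2} on $(X,g)$ with its given orientation and $E^-_\bu$ is the same operator on $X$ equipped with the opposite orientation. Since $\Ker$, $\Coker$ and hence $\det$ behave multiplicatively under direct sum of elliptic operators, and since twisting by $\Ad(P)$ preserves this decomposition, we obtain canonical isomorphisms
\e
\check O_P^{E_\bu}\cong \check O_P^{E^+_\bu}\ot_{\Z_2}\check O_P^{E^-_\bu},\qquad
\bar O_P^{E_\bu}\cong \bar O_P^{E^+_\bu}\ot_{\Z_2}\bar O_P^{E^-_\bu}
\label{or4eqKW1}
\e
of principal $\Z_2$-bundles on $\B_P$ and $\ovB_P$, respectively. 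So it suffices to orient each factor on the right hand side of \eqref{or4eqKW1} and tensor.

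For part (a), given the $\Spinc$-structure $\mathfrak s$ on $(X,g)$, let $\mathfrak s^{\rm op}$ denote the $\Spinc$-structure on $X$ with the opposite orientation obtained by swapping the roles of $S_+^{\mathfrak s}$ and $S_-^{\mathfrak s}$; this is canonical and reversible. Apply Theorem \ref{or4thm4}(b) to $E^+_\bu$ using $\mathfrak s$, and to $E^-_\bu$ using $\mathfrak s^{\rm op}$, to obtain canonical n-orientations $\check\om_P^+,\check\om_P^-$ of $\check O_P^{E^\pm_\bu}$. Tensoring by \eqref{or4eqKW1} gives a canonical n-orientation $\check\om_P := \check\om_P^+\ot\check\om_P^-$ of $\check O_P^{E_\bu}$. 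Now use the chosen orientations on $\g$ and $\det D$ together with \eqref{or1eq7} to convert $\check\om_P$ into an honest orientation $\om_P$ of $\bar O_P^{E_\bu}$; this is possible because the total index satisfies $\ind D = \ind D^+ + \ind D^-$ and $\det D \cong \det D^+\ot\det D^-$, so a choice of orientation on $\det D$ and $\g$ pins down the required trivialization of $O_{X\t G}^{E_\bu}\vert_{[\nabla^0]}$ even though we have not separately oriented $\det D^\pm$ or chosen parities for $\ind D^\pm$. Finally, pull $\om_P$ back along the forgetful map $\M_P^{\rm KW}\hookra\ovB_P$ to obtain the claimed orientation of the derived manifold $\M_P^{\rm KW}$.

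Part (b) is then immediate from Theorem \ref{or4thm4}(c): when $G$ is connected and simply-connected, or $G=\U(m)$, both $\check\om_P^+$ and $\check\om_P^-$ are independent of the chosen $\Spinc$-structures $\mathfrak s$ and $\mathfrak s^{\rm op}$, so $\check\om_P$, and hence $\om_P$, is independent of $\mathfrak s$.

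The main conceptual point (and the only potential obstacle) is the reduction \eqref{or4eqKW1} and the compatibility of the $\Spinc$-structure bookkeeping when passing from $(X,g)$ to its orientation-reversed version. Both are essentially formal once one recognizes that the Kapustin--Witten deformation operator is, at the level of symbols, the direct sum of the instanton deformation operator for the two orientations; the rest is an application of the two-orientation version of Theorem \ref{or4thm4}, in direct analogy with how Theorem \ref{or4thm5} packaged Theorem \ref{or4thm4} into a statement about $\M_P^{\rm asd}$.
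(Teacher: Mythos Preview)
Your proposal is correct and follows essentially the same approach as the paper: the paper's proof consists of the single observation (stated just before the theorem) that $E_\bu=E^+_\bu\op E^-_\bu$ gives $\bar O_P^{E_\bu}\cong\bar O_P^{E^+_\bu}\ot_{\Z_2}\bar O_P^{E^-_\bu}$, and then applies Theorem~\ref{or4thm4} to each factor. Your version spells out more of the bookkeeping (the opposite $\Spinc$-structure ${\mathfrak s}^{\rm op}$, the passage from n-orientations to orientations via \eqref{or1eq7}) than the paper does, but the argument is the same.
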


The authors believe Theorem \ref{or4thm7} is new.

\subsubsection{The Haydys--Witten equations on 5-manifolds}
\label{or427}

The {\it Haydys--Witten equations\/} are a 5-dimensional gauge theory introduced independently by Haydys \cite[\S 3]{Hayd} and Witten \cite[\S 5.2.6]{Witt}. Both Haydys and Witten work on oriented 5-manifolds. We will give a different presentation of the Haydys--Witten equations to \cite{Hayd,Witt}, which also works for non-oriented 5-manifolds, and is equivalent to \cite{Hayd,Witt} in the oriented case.

Let $(X,g)$ be a compact Riemannian 5-manifold, and $\al\in\Ga^\iy(\La^4T^*X)$ be a nonvanishing 4-form with $\md{\al}_g\equiv 1$. Then there are orthogonal splittings
\e
\begin{gathered}
\La^1T^*X=\La^1_\t T^*X\op\La^1_0T^*X,\;\>
\La^2T^*X=\La^2_+T^*X\op\La^2_-T^*X\op\La^2_0T^*X, \\ 
\La^3T^*X=\La^3_+T^*X\op\La^3_-T^*X\op\La^3_0T^*X,	
\end{gathered}
\label{or4eq7}
\e
where $\La^1_\t T^*X$ has rank 1, $\La^j_\pm T^*X$ have rank 3, and $\La^j_0T^*X$ have rank 4. They may be described explicitly as follows: if $x\in X$ and $(e_1,\ldots,e_5)$ is an orthonormal basis of $T_x^*X$ with $\al\vert_x=e_1\w e_2\w e_3\w e_4$, then
\e
\begin{aligned}
\La^1_\t T^*_xX&=\an{e_5}_\R, \;\> \La^1_0T^*_xX=\an{e_1,e_2,e_3,e_4}_\R, \;\>
\La^2_0T^*_xX=\an{e_{15},e_{25},e_{35},e_{45}}_\R, \\
\La^2_+T^*_xX&=\an{e_{12}+e_{34},e_{13}+e_{42},e_{14}+e_{23}}_\R, \\
\La^2_-T^*_xX&=\an{e_{12}-e_{34},e_{13}-e_{42},e_{14}-e_{23}}_\R,  \\
\La^3_+T^*_xX&=\an{e_{125}+e_{345},e_{135}+e_{425},e_{145}+e_{235}}_\R, \\
\La^3_-T^*_xX&=\an{e_{125}+e_{345},e_{135}+e_{425},e_{145}+e_{235}}_\R, \\
\La^3_0T^*_xX&=\an{e_{234},e_{341},e_{412},e_{123}}_\R,
\end{aligned}\!\!\!\!\!\!\!\!{}
\label{or4eq8}
\e
where $e_{ij\cdots l}$ means $e_i\w e_j\w\cdots \w e_l$.

Let $G$ be a Lie group and $P\ra X$ be a principal $G$-bundle. The {\it Haydys--Witten equations\/} concern pairs $(\nabla_P,\psi)$ of a connection $\nabla_P$ of $P$ and a section $\psi\in\Ga^\iy(\Ad(P)\ot \La^3_+T^*X)$ satisfying
\begin{align*}
F^{\nabla_P}_{+} + (\d^*_{\nabla_P}\psi)_++q(\psi) &= 0 \quad\text{in }\Ga^\iy(\Ad(P)\ot\La^2_+T^*X),\\
F^{\nabla_P}_0 + (\d^*_{\nabla_P}\psi)_0 &= 0  \quad \text{in }\Ga^\iy(\Ad(P)\ot \La^2_0T^*X),
\end{align*}
for $q:\Ad(P)\ot \La^3_+T^*X\ra\Ad(P)\ot \La^2_+T^*X$ a certain quadratic bundle map. They were introduced independently by Haydys \cite[\S 3]{Hayd} and Witten~\cite[\S 5.2.6]{Witt}. 

The gauge group $\G_P=\Aut(P)$ acts on the family of Haydys--Witten solutions $(\nabla_P,\psi)$. We call $(\nabla_P,\psi)$ {\it irreducible\/} if its stabilizer group in $\G_P$ is $Z(G)$. Define $\M_P^{\rm HW}$ to be the moduli space of gauge equivalence classes $[\nabla_P,\psi]$ of irreducible Haydys--Witten solutions $(\nabla_P,\phi)$. It is a derived manifold of virtual dimension 0. There is a forgetful map $\M_P^{\rm HW}\ra\ovB_P$ taking $[\nabla_P,\psi]\mapsto[\nabla_P]$, for $\ovB_P$ as in Definition~\ref{or1def1}. 

The deformations of $[\nabla_P,\psi]$ in $\M_P^{\rm HW}$ are controlled by the elliptic complex 
\begin{equation*} 
\xymatrix@!0@C=34.5pt@R=38pt{ 0 \ar[rr] &&
\Ga^{\iy} \bigl( \Ad(P) \!\ot\! \La^0T^*X \bigr) \ar[rrrrr]^(0.42){(\d_{\nabla_P}\, 0)} &&&&& \Ga^{\iy} \bigl( \Ad(P) \!\ot\! (\La^1T^*X\!\op\!\La^3_+T^*X) \bigr) \\
\ar[rrrrr]^(0.27){\begin{pmatrix} (\d_{\nabla_P})_+ & (\d^*_{\nabla_P})_+  \\ (\d_{\nabla_P})_0 & (\d^*_{\nabla_P})_0 \end{pmatrix}} &&&&&
\Ga^{\iy} ( \Ad(P) \ot (\La^2_+T^*X\op\La^2_0T^*X)  ) \ar[rrr] &&& 0, } 
\end{equation*}
modulo degree 0 operators. As in \S\ref{or421}--\S\ref{or426}, the orientation bundle of $\M_P^{\rm HW}$ is the pullback of the orientation bundle $\bar O_P^{E_\bu}\ra\ovB_P$ under the forgetful map $\M_P^{\rm HW}\ra\ovB_P$, where $E_\bu$ is the elliptic operator
\e
\begin{gathered} 
D\!=\!\begin{pmatrix} \d & \d^*_0 & \d^*_+  \\ 0 & \d_+ & \d_+ \end{pmatrix}\begin{aligned}[h]:\, &\Ga^{\iy} \bigl( \La^0T^*X\!\op\!\La^2_0T^*X\!\op\!\La^2_+T^*X \bigr)\\
&\quad\longra \Ga^{\iy} \bigl(\La^1T^*X\op\La^3_+T^*X\bigr). 
\end{aligned}
\end{gathered}
\label{or4eq9}
\e

\begin{thm}
\label{or4thm8}
Let\/ $(X,g)$ be a compact Riemannian $5$-manifold, and\/ $\al\in\Ga^\iy(\La^4T^*X)$ a unit length\/ $4$-form.
\begin{itemize}
\setlength{\itemsep}{0pt}
\setlength{\parsep}{0pt}
\item[{\bf(a)}] Suppose there exists a Hermitian complex structure $J$ on the fibres of\/ $\La^1_0T^*X$ in \eq{or4eq7} compatible with the natural orientation on $\La^1_0T^*X$. (This is equivalent to choosing an \begin{bfseries}almost CR structure\end{bfseries} on $X$.) Then using $J$ we can define a complex structure on $E_\bu,$ as in Theorem\/~{\rm\ref{or2thm1}}.

Thus for any Lie group $G$ and principal\/ $G$-bundle $P\ra X,$ Theorem\/ {\rm\ref{or2thm1}} defines a canonical n-orientation on\/~$\B_P$.
\item[{\bf(b)}] Now let\/ $G$ be a connected, simply-connected Lie group. Then for all principal\/ $G$-bundles $P\ra X,$ we can construct a canonical n-orientation on $\B_P$. It agrees with that defined in\/ {\bf(a)} when part\/ {\bf(a)} applies.
\end{itemize}	
\end{thm}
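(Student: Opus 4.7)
The plan is to handle (a) and (b) using the two complementary techniques developed earlier in the paper: (a) will reduce to Theorem \ref{or2thm1} after an explicit linear-algebra construction, and (b) to the excision scheme of \S\ref{or33}, with the almost CR structure playing the role that the $\Spinc$-structure played in the proof of Theorem \ref{or4thm4}(b). The agreement between the two constructions, when both apply, will follow at once from Theorem \ref{or3thm2}(b).

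For (a), I will promote $J$ on $\La^1_0 T^*X$ to complex structures on $E_0,E_1$ under which $\si(D)$ is $\C$-linear. The starting point is the natural isometries $\La^2_0T^*X\cong\La^1_0T^*X\ot\La^1_\t T^*X$ and $\La^3_+T^*X\cong\La^2_+T^*X\ot\La^1_\t T^*X$ obtained by wedging with a local unit generator of $\La^1_\t T^*X$, as is transparent from the basis \eq{or4eq8}. Using $J$, one endows $\La^1_0$, hence $\La^2_0$, with the structure of a complex rank-$2$ bundle; pairing $\La^0 T^*X$ with $\La^1_\t T^*X$ yields a complex line bundle; and the ``vertical'' complex structure on $\R\op\La^1_\t T^*X$ upgrades $\La^2_+T^*X\op\La^3_+T^*X\cong\La^2_+T^*X\ot(\R\op\La^1_\t T^*X)$ to a complex rank-$3$ bundle. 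A direct computation in the basis \eq{or4eq8}, equivalently a comparison with the CR-Dolbeault complex of $\C^2\t\R$, then shows $\si(D)_\xi$ is $\C$-linear for every nonzero $\xi$, so Theorem \ref{or2thm1} yields the canonical n-orientation on $\B_P$.

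For (b), since $G$ is connected and simply-connected, Proposition \ref{or3prop1}(a) with $d=4$ produces a 1-skeleton $Y\subset X,$ a trivialization $\Phi:P\vert_{X\sm Y}\to(X\sm Y)\t G,$ a tubular neighbourhood $U\supset Y$ retracting onto $Y,$ an open $V\subset X\sm Y$ with $X=U\cup V,$ and a connection $\hat\nabla_P$ trivial on $V$ via $\Phi$. The fibre bundle of metric-compatible complex structures on the (locally) oriented rank-$4$ bundle $\La^1_0 T^*X\vert_U$ is an $S^2$-fibration, and since $\pi_0(S^2)=\pi_1(S^2)=0$ obstruction theory yields a section $J_U$ over $Y$ (hence, by retraction, over all of $U$) which is unique up to homotopy. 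Applying (a) pointwise endows $E_\bu\vert_U$ with a complex structure, and Theorem \ref{or3thm2}(a) supplies an isomorphism $\check O_P^{E_\bu}\vert_{[\hat\nabla_P]}\cong\Z_2$. Parallel-transporting along a smooth path in the contractible space of connections from $\hat\nabla_P$ to an arbitrary $\nabla_P$ trivializes $\check O_P^{E_\bu}$ at $[\nabla_P]$.

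To show independence of all intermediate choices I would use Proposition \ref{or3prop1}(b) with $d=4$ to produce a 2-skeleton $Z\subset X\t[0,1]$ interpolating between $Y_0$ and $Y_1,$ together with a neighbourhood $W\supset Z$ retracting onto $Z$. Because $\pi_0(S^2)=\pi_1(S^2)=0,$ the relative obstructions in $H^i(Z,Y_0\amalg Y_1;\pi_{i-1}(S^2))$ for $i=1,2$ vanish, so $J_{U_0}$ and $J_{U_1}$ extend over $W$ to a continuous family $(J_{W_t})_{t\in[0,1]}$. Interpolating the remaining data continuously and invoking Theorem \ref{or3thm1}(i) together with Theorem \ref{or3thm2}(a), the induced trivializations form a continuous family in the discrete set $\check O_P^{E_\bu}\vert_{[\nabla_P]}\cong\Z_2,$ hence coincide. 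The last sentence of (b) then follows from Theorem \ref{or3thm2}(b) applied, with the global complex structure from (a), on the open set $U$. The main obstacle will be the explicit check of $\C$-linearity of $\si(D)$ in (a)---a routine but delicate calculation in the basis \eq{or4eq8}---together with, when $X$ is not orientable, a small supplementary argument that replaces the $S^2$-fibration of almost CR structures by its two-component analogue $S^2\amalg S^2$ and canonically selects one component over the one-skeleton.
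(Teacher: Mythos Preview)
Your plan for (b) is essentially the paper's own argument: trivialize $P$ off a $1$-skeleton using Proposition~\ref{or3prop1}(a) with $d=4$, build an almost CR structure over the retract $U$ by obstruction theory (the fibre of oriented Hermitian complex structures on a rank-$4$ bundle is $S^2$, so there are no $\pi_0,\pi_1$ obstructions), apply Theorem~\ref{or3thm2}(a), and use Proposition~\ref{or3prop1}(b) plus Theorem~\ref{or3thm1}(i) for independence. The compatibility with (a) via Theorem~\ref{or3thm2}(b) is also exactly right. One small point: your closing remark about the non-orientable case is unnecessary, since $\La^1_0T^*X$ carries a \emph{canonical} orientation induced by $\al$ (that is what ``the natural orientation on $\La^1_0T^*X$'' in the statement refers to), so the fibre is always $S^2$, never $S^2\amalg S^2$.

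There is, however, a genuine problem with your construction in (a). Theorem~\ref{or2thm1} requires complex structures on $E_0$ and $E_1$ \emph{separately}, with $\si(D)$ $\C$-linear between them. But two of your proposed complex pairings straddle the two bundles: $\La^0T^*X$ sits in $E_0=\La^0\op\La^2_0\op\La^2_+$ while $\La^1_\t T^*X$ sits in $E_1=\La^1\op\La^3_+$, and likewise $\La^2_+\subset E_0$ while $\La^3_+\subset E_1$. So ``$\La^0\op\La^1_\t$ as a complex line'' and ``$\La^2_+\op\La^3_+\cong\La^2_+\ot(\R\op\La^1_\t)$ as complex rank $3$'' define a complex structure only on $E_0\op E_1$, not on $E_0$ and $E_1$ individually, and the hypothesis of Theorem~\ref{or2thm1} is not met.

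The fix, which is what the paper does, is to keep all pairings \emph{within} $E_0$ or $E_1$. The almost CR structure $J$ singles out a rank-$1$ subbundle of $\La^2_+$ (spanned by the ``K\"ahler form'' $\om_J$, locally $e_{12}+e_{34}$) and correspondingly a rank-$1$ subbundle of $\La^3_+$ (spanned by $\om_J\w e_5$). One then pairs, \emph{inside} $E_0$, the line $\La^0$ with $\an{\om_J}\subset\La^2_+$; the remaining rank-$2$ piece of $\La^2_+$ (locally $\an{e_{13}+e_{42},\,e_{14}+e_{23}}$) already carries a complex structure from $J$; and $\La^2_0\cong\La^1_0\ot\La^1_\t$ is complex via $J$ as you said. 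Dually, \emph{inside} $E_1$, one pairs $\La^1_\t$ with $\an{\om_J\w e_5}\subset\La^3_+$; $\La^1_0$ is complex via $J$; and the remaining rank-$2$ piece of $\La^3_+$ is complex via $J$. With this corrected decomposition the $\C$-linearity check of $\si(D)$ in the basis~\eq{or4eq8} goes through, and the rest of your argument stands.
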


\begin{proof} For (a), at a point $x\in X$, choose an orthonormal basis $(e_1,\ldots,e_5)$ for $T_x^*X$ with $\al\vert_x=e_1\w e_2\w e_3\w e_4$ and $J(e_1)=e_2$, $J(e_3)=e_4$ in the basis \eq{or4eq8} for $\La^1_0T_x^*X$. Define complex vector space structures on $E_0\vert_x,E_1\vert_x$ in \eq{or4eq9} by
\begin{align*}
i\cdot 1&=e_{12}+e_{34}, \; i\cdot e_{15}=e_{25}, \; i\cdot e_{35}=e_{45}, \;
i\cdot (e_{13}+e_{42}) = e_{14}+e_{23}, \\
i\cdot e_5&=e_{125}+e_{345}, \; i\cdot e_1=e_2, \; i\cdot e_3=e_4, \;  i\cdot (e_{135}+e_{425})=e_{145}+e_{235}.
\end{align*}
It is easy to check that these are independent of the choice of $(e_1\ldots,e_5)$, so over all $x\in X$ they extend to complex vector bundle structures on $E_0,E_1$, and the symbol of $D$ is complex linear. Part (a) follows.

For (b), let $G$ be a connected, simply-connected Lie group, and $P\ra X$ a principal $G$-bundle. By Proposition \ref{or3prop1}(a) with $d=4$ we can choose a 1-skeleton $Y\subset X$, a CW complex of dimension 1, and a trivialization $\Phi:P\vert_{X\sm Y}\ra (X\sm Y)\t G$. As in Step 3 of \S\ref{or33}, choose a small open neighbourhood $U$ of $Y$ in $X$ such that $U$ retracts onto $Y$, and an open $V\subset X$ with $\ov V\subseteq X\sm Y$ and $U\cup V=X$, and a connection $\hat\nabla_P$ on $P\ra X$ which is trivial over $V\subset X\sm Y$, using the chosen trivialization $\Phi$ of~$P\vert_{X\sm Y}$. 

For general $X$ there need not exist oriented Hermitian complex structures $J$ on $\La^1_0T^*X$, and if they exist they need not be unique up to isotopy. But such complex structures exist on $\La^1_0T^*X\vert_Y$ over the 1-skeleton $Y$, and are unique up to isotopy. As $U$ retracts onto $Y$, it follows that $\La^1_0T^*X\vert_U$ admits oriented Hermitian complex structures $J$, uniquely up to isotopy. Thus as in (a) we can define a complex structure on $E_\bu\vert_U$, unique up to isotopy.

The rest of the proof of (b) closely follows the proof of Theorem \ref{or4thm4}(b) from the paragraph containing \eq{or4eq4}, except that in the paragraph before that containing \eq{or4eq5}, as $Z$ is now a CW-complex of dimension 2, there exist oriented Hermitian complex structures on $\pi_X^*(\La^1_0T^*X)\vert_Z$, and we can choose them to interpolate between given complex structures over $Y_0$ and~$Y_1$. 
\end{proof}

To pass from an n-orientation of $\B_P$ to an orientation of $\B_P$, by \eq{or1eq4} and \eq{or1eq7} we need to choose an orientation for $\det D$, noting that as $\ind D=0$ on any 5-manifold $X$ we do not need an orientation on $\g$. Since orientations on $\B_P$ induce orientations of $\ovB_P$, which pull back to orientations of $\M_P^{\rm HW}$, we deduce:

\begin{thm} Let\/ $(X,g)$ be a compact Riemannian $5$-manifold, and\/ $\al\in\Ga^\iy(\La^4T^*X)$ a unit length\/ $4$-form. Suppose\/ $G$ is a connected, simply-connected Lie group, and choose an orientation on\/ $\det D$. Then for all principal\/ $G$-bundles $P\ra X,$ we can construct a canonical orientation on\/~$\M_P^{\rm HW}$.
\label{or4thm9}	
\end{thm}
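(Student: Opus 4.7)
The plan is to deduce Theorem \ref{or4thm9} as a direct consequence of Theorem \ref{or4thm8}(b), combined with the general conversion between n-orientations and orientations from \S\ref{or1}, and the pullback procedure for moduli space orientations described in \S\ref{or41}. First I would invoke Theorem \ref{or4thm8}(b) to obtain, for any principal $G$-bundle $P\ra X$ with $G$ connected and simply-connected, a canonical n-orientation $\check\om_P:\check O_P^{E_\bu}\,{\buildrel\cong\over\longra}\,\B_P\t\Z_2,$ where $E_\bu$ is the elliptic operator \eqref{or4eq9}. Note that since $G$ is connected, the issue flagged in Remark \ref{or2rem3} concerning $\Ad(G)$-invariant orientations on $\g$ does not arise.

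Next I would convert this n-orientation to an honest orientation using \eqref{or1eq4} and \eqref{or1eq7}, which give
\begin{equation*}
\check O_P^{E_\bu}\cong O_P^{E_\bu}\ot_{\Z_2}O_{X\t G}^{E_\bu}\vert_{[\nabla^0]}\cong O_P^{E_\bu}\ot_{\Z_2}\Or(\det D)^{\ot^{\dim\g}}\ot_{\Z_2}\Or(\g)^{\ot^{\ind D}}.
\end{equation*}
The crucial observation is that $\ind D=0$ on any closed $5$-manifold. Indeed, the operator $D$ in \eqref{or4eq9} is a first-order elliptic operator on an odd-dimensional closed manifold, and for such operators the topological index (given by the Atiyah--Singer formula) vanishes, since the relevant characteristic class expression on $TX$ has no component in top degree for odd-dimensional $X$. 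One can also verify this directly by noting $\rank E_0=1+4+3=8=5+3=\rank E_1$ and running the index computation. Consequently the $\Or(\g)^{\ot^{\ind D}}$ factor is canonically trivial, so the chosen orientation of $\det D$ produces a trivialization of $O_{X\t G}^{E_\bu}\vert_{[\nabla^0]}$. Combining with $\check\om_P$ yields a canonical orientation $\om_P$ on~$\B_P$.

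Finally, as discussed at the end of Definition \ref{or1def2}, since $\Pi_P:\B_P\ra\ovB_P$ is a fibration with connected and simply-connected fibre $[*/Z(G)]$, the orientation $\om_P$ descends to a canonical orientation $\bar\om_P$ on $\ovB_P$. The Haydys--Witten deformation complex of \S\ref{or427} has determinant line naturally isomorphic to the pullback of $\bar L_P^{E_\bu}$ under the forgetful morphism $\io:\M_P^{\rm HW}\ra\ovB_P,$ $[\nabla_P,\psi]\mapsto[\nabla_P]$; hence the orientation bundle of $\M_P^{\rm HW}$ (as a derived manifold, in the sense of Remark \ref{or4rem1}(iii)) is $\io^*(\bar O_P^{E_\bu})$. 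Pulling back $\bar\om_P$ along $\io$ gives the desired canonical orientation on $\M_P^{\rm HW}$. There is no real obstacle here beyond bookkeeping: all the serious work was done in Theorem \ref{or4thm8}, whose proof handled the excision argument on the $1$-skeleton using the existence of Hermitian complex structures on $\La^1_0T^*X\vert_U$.
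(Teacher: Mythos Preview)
Your proof is correct and follows essentially the same route as the paper: the paper deduces Theorem~\ref{or4thm9} from Theorem~\ref{or4thm8}(b) in the short paragraph preceding its statement, using \eqref{or1eq4}--\eqref{or1eq7} together with $\ind D=0$ on a 5-manifold to pass from the canonical n-orientation on $\B_P$ to an orientation, then descending to $\ovB_P$ and pulling back to $\M_P^{\rm HW}$. Your write-up simply makes each of these steps explicit.
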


The authors believe Theorems \ref{or4thm8} and \ref{or4thm9} are new.

\subsubsection{Donaldson--Thomas instantons on symplectic 6-manifolds}
\label{or428}

Let $(X,\om)$ be a symplectic 6-manifold, $J$ a compatible almost complex structure on $X$, and $g=\om(-,J-)$ the associated Hermitian metric. The second author \cite{Tana2} introduced `Donaldson--Thomas instantons' \cite{Tana2}, a gauge theory on $X$, in the hope of defining `analytic Donaldson--Thomas invariants'. 

Let $G$ be a Lie group and $P\ra X$ a principal $G$-bundle. A {\it Donaldson--Thomas instanton\/} on $P$ is a pair $(\nabla_P,\xi)$ of a connection $\nabla_P$ on $P$ and a section $\xi\in\Ga^\iy(\Ad(P)\ot_\R\La^{0,3}T^*X)$ satisfying
\begin{equation*} 
F^{\nabla_P}_{0,2} = \bar{\partial}_{ \nabla_P}^{*} \xi, \qquad F^{ \nabla_P}_{1,1} \w \om = 0. 
\end{equation*} 
The gauge group $\G_P=\Aut(P)$ acts on the family of Donaldson--Thomas instantons $(\nabla_P,\xi)$. We call $(\nabla_P,\xi)$ {\it irreducible\/} if its stabilizer group in $\G_P$ is $Z(G)$. Write $\M_P^{\rm DT}$ for the moduli space of gauge equivalence classes $[\nabla_P,\xi]$ of irreducible Donaldson--Thomas instantons $(\nabla_P,\xi)$. It is a derived manifold. There is a forgetful map $\M_P^{\rm DT}\ra\ovB_P$ taking~$[\nabla_P,\xi]\mapsto[\nabla_P]$. 

The deformations of $[\nabla_P,\xi]$ in $\M_P^{\rm DT}$ are controlled by the elliptic complex 
\begin{equation*} 
\xymatrix@!0@C=34.5pt@R=38pt{ 0 \ar[rr] &&
\Ga^{\iy} \bigl( \Ad(P) \!\ot\! \La^0T^*X \bigr) \ar[rrrrr]^(0.42){(\d_{\nabla_P}\, 0)} &&&&& \Ga^{\iy} \bigl( \Ad(P) \!\ot\! (\La^1T^*X\!\op\!\La^{0,3}T^*X) \bigr) \\
\ar[rrrrr]^(0.27){\begin{pmatrix} \bar\partial^{0,2}_{\nabla_P} & \bar\partial_{\nabla_P}^*  \\ \d^{\an{\om}}_{\nabla_P} & 0 \end{pmatrix}} &&&&&
\Ga^{\iy} ( \Ad(P) \ot (\La^{0,2}T^*X\op\an{\om})  ) \ar[rrr] &&& 0, } 
\end{equation*}
modulo degree 0 operators. The orientation bundle of $\M_P^{\rm DT}$ is the pullback of the orientation bundle $\bar O_P^{E_\bu}\ra\ovB_P$ under the forgetful map $\M_P^{\rm DT}\ra\ovB_P$, where up to isotopy we may take $E_\bu$ to be the elliptic operator
\begin{equation*} 
D=\bar\partial+\bar\partial^*:\Ga^{\iy} \bigl( \La^{0,0}T^*X\op\La^{0,2}T^*X\bigr)\longra \Ga^{\iy} \bigl(\La^{0,1}T^*X\op\La^{0,3}T^*X\bigr). 
\end{equation*}
As $D$ is complex linear, Theorem \ref{or2thm1} gives canonical orientations on all moduli spaces $\B_P,\ovB_P$ and~$\M_P^{\rm DT}$.

\subsubsection{\texorpdfstring{$G_2$-instantons on $G_2$-manifolds}{G₂-instantons on G₂-manifolds}}
\label{or429}

In the sequel \cite{JoUp} we solve Problem \ref{or3prob} for the Dirac operator on 7-manifolds and $G=\U(m)$ or $\SU(m)$, using a variation on the method of \S\ref{or33}. Here {\it flag structures\/} \cite[\S 3.1]{Joyc2} are an algebro-topological structure on 7-manifolds $X$, related to `linking numbers' of disjoint homologous 3-submanifolds~$Y_1,Y_2\subset X$.

\begin{thm}[Joyce and Upmeier \cite{JoUp}] Suppose\/ $(X,g)$ is a compact, oriented, spin Riemannian $7$-manifold, and take $E_\bu$ to be the Dirac operator $D:\Ga^\iy(S)\ra\Ga^\iy(S)$ on $X$. Fix an orientation on $\det D$ and a \begin{bfseries}flag structure\end{bfseries} on $X,$ as in Joyce {\rm\cite[\S 3.1]{Joyc2}}. Let\/ $G$ be $\U(m)$ or $\SU(m)$ and\/ $P\ra X$ be a principal\/ $G$-bundle. Then we can construct a canonical orientation on\/~$O_P^{E_\bu}\ra\B_P$.

\label{or4thm10}
\end{thm}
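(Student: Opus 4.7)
The proof implements the five-step excision strategy of \S\ref{or33} for the Dirac operator on a compact spin 7-manifold. The essential difficulty is that $7\equiv 7\pmod 8$, so Example \ref{or2ex2} yields no complex structure on the Dirac symbol, making Theorem \ref{or2thm1} unavailable; moreover Corollary \ref{or2cor1} generally fails since $K^1(X)\ot_\Z\Z_2$ can be nonzero. The flag structure on $X$ supplies precisely the extra topological data that replaces these missing inputs.

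By Example \ref{or2ex5} it suffices to treat $G=\SU(m)$, which is connected and simply-connected, so Proposition \ref{or3prop1}(a) applies with $d=4$: for each principal $\SU(m)$-bundle $P\ra X$ and connection $\nabla_P$, one obtains a $3$-dimensional CW-subcomplex $Y\subset X$ and a trivialization $\Phi:P\vert_{X\sm Y}\to(X\sm Y)\t\SU(m)$. Choose an open neighbourhood $U\supset Y$ retracting to $Y$, an open $V\subset X\sm Y$ with $U\cup V=X$, and modify $\nabla_P$ to a connection $\hat\nabla_P$ trivial on $V$ via $\Phi$. The plan is to define the canonical n-orientation of $\check O_P^{E_\bu}\vert_{[\hat\nabla_P]}$ by excising (Theorem \ref{or3thm1}) from a reference bundle over a standard model 7-manifold, modified by an explicit sign correction read off from the flag structure applied to the topological data $(Y,\Phi)$; the chosen orientation of $\det D$ then converts this to an orientation of $O_P^{E_\bu}\vert_{[\hat\nabla_P]}$ through \eq{or1eq7}, and the result propagates to arbitrary $[\nabla_P]\in\B_P$ by parallel transport along a path in the contractible affine space $\A_P$.

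The principal obstacle is Step 4 of \S\ref{or33} --- proving independence of the choices $(Y,\Phi,U,V,\hat\nabla_P)$. Given two admissible setups $(Y_i,\Phi_i,\ldots,\hat\nabla_{P,i})$ for $i=0,1$, Proposition \ref{or3prop1}(b) furnishes an interpolating $4$-skeleton $Z\subset X\t[0,1]$ and trivialization $\Psi$ of $\pi_X^*(P)$ on its complement, and by Theorem \ref{or3thm1}(i) the resulting excision isomorphisms vary continuously in $t\in[0,1]$. The discrepancy between the two candidate orientations reduces to an Atiyah--Singer families index computation for the twisted Dirac operator over $Z$: concretely, the obstruction class $[\Psi_1\Psi_0^{-1}]$ restricted to individual $3$-cells takes values in $\pi_3(\SU(m))\cong\Z$, which generically produce nontrivial sign changes. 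The technical heart of \cite{JoUp} is the index-theoretic identification showing that Joyce's flag structures \cite[\S 3.1]{Joyc2} --- which pair disjoint homologous $3$-submanifolds of $X$ to an integer via a linking number --- couple with these $\pi_3(\SU(m))$-classes to produce exactly the sign needed to cancel the families-index contribution, so that the definition is consistent. Once this identification is secured, functoriality and the remaining verifications of Step 5 follow from the strong naturality properties of Theorem \ref{or3thm1}(i)--(iii), and the $\U(m)$ case follows from Example \ref{or2ex5} applied to $\SU(m+1)$.
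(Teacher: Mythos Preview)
The paper does not prove this theorem at all: it is quoted as a result from the sequel \cite{JoUp}, with only the one-sentence remark that \cite{JoUp} solves Problem~\ref{or3prob} for the Dirac operator on 7-manifolds ``using a variation on the method of \S\ref{or33}''. So there is no in-paper proof to compare your proposal against.

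That said, your outline is a faithful summary of the strategy the paper advertises for \cite{JoUp}, and you have correctly identified the structural ingredients: reduction to $\SU(m)$ via Example~\ref{or2ex5}, Proposition~\ref{or3prop1} with $d=4$ to trivialize $P$ off a 3-skeleton, excision via Theorem~\ref{or3thm1}, and the fact that the genuine content lies in Step~4 where the $\pi_3(\SU(m))\cong\Z$ ambiguity in comparing two trivializations must be cancelled by the flag-structure data. You are also right that Theorem~\ref{or2thm1} and Corollary~\ref{or2cor1} are unavailable here.

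Where your proposal is not a proof is exactly where you say it is not: the sentence ``The technical heart of \cite{JoUp} is the index-theoretic identification showing that \ldots\ flag structures \ldots\ produce exactly the sign needed'' is a description of what must be proved, not an argument. Establishing that identification --- i.e.\ computing the mod-2 spectral flow of the twisted Dirac family over the interpolating 4-skeleton $Z$ and matching it to the flag-structure pairing of the associated 3-cycles --- is the entire content of the theorem, and nothing in the present paper supplies it. So your proposal is best read as a correct roadmap rather than a proof; the substantive analysis is deferred to \cite{JoUp}, just as the paper itself defers it.
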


The orientability of $O_P^{E_\bu}$ was previously proved by Walpuski \cite[\S 6.1]{Walp1}, but the canonical orientations are new. For Lie groups $G$ other than $\U(m)$ or $\SU(m)$, the analogue of Theorem \ref{or4thm10} may be false. In \cite[\S 2.4]{JoUp} the authors give a compact, oriented, spin 7-manifold $X$ such that $O_P^{E_\bu}\ra\B_P$ is not orientable when $P=X\t\Sp(m)\ra X$ is the trivial $\Sp(m)$-bundle for any~$m\ge 2$.

Theorem \ref{or4thm10} is related to a 7-dimensional gauge theory discussed by Donaldson and Thomas \cite{DoTh} and Donaldson and Segal \cite{DoSe}. Let $(X,\vp,g)$ be a compact $G_2$-manifold with $\d(*\vp)=0$, $G$ a Lie group, and $P\ra X$ a principal $G$-bundle. A $G_2$-{\it instanton\/} on $P$ is a connection $\nabla_P$ on $P$ with $F^{\nabla_P}\w*\vp=0$ in $\Ga^\iy(\Ad(P)\ot\La^6T^*X)$. Write $\M_P^{G_2}$ for the moduli space of irreducible $G_2$-instantons on $P$. Then $\M_P^{G_2}$ is a derived manifold of virtual dimension 0. Examples and constructions of $G_2$-instantons are given in~\cite{MNS,SaEa,SaWa,Walp2,Walp3,Walp4}. 

As in \S\ref{or41}, we may orient $\M_P^{G_2}$ by restricting orientations on $\bar O^{E_\bu}_P\ra\ovB_P$, for $E_\bu$ the Dirac operator of the spin structure on $X$ induced by $(\vp,g)$. Thus Theorem \ref{or4thm10} implies:

\begin{cor} Let\/ $(X,\vp,g)$ be a compact\/ $G_2$-manifold with\/ $\d(*\vp)=0,$ and fix an orientation on $\det D$ and a flag structure on $X$. Then for any principal\/ $G$-bundle\/ $P\ra X$ for\/ $G=\U(m)$ or $\SU(m),$ we can construct a canonical orientation on\/~$\M_P^{G_2}$. 
\label{or4cor1}
\end{cor}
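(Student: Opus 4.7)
The plan is to reduce Corollary~\ref{or4cor1} to Theorem~\ref{or4thm10} by identifying the orientation bundle of the derived manifold $\M_P^{G_2}$ with the pullback along $\M_P^{G_2}\hookra\ovB_P$ of $\bar O_P^{E_\bu}$, where $E_\bu$ is the Dirac operator of the spin structure on $X$ induced by the $G_2$-structure. First I would observe that a $G_2$-structure reduces the frame bundle of $X$ to $G_2\subset\Spin(7)$, which in particular equips $X$ with a canonical spin structure, so the Dirac operator $D:\Ga^\iy(S)\ra\Ga^\iy(S)$ is well-defined. With an orientation of $\det D$ and a flag structure on $X$ fixed, Theorem~\ref{or4thm10} then supplies, for every principal $\U(m)$- or $\SU(m)$-bundle $P\ra X$, a canonical orientation on~$O_P^{E_\bu}\ra\B_P$.

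Next I would write down the deformation complex \eq{or4eq3} of a $G_2$-instanton $[\nabla_P]\in\M_P^{G_2}$ and, as in Remark~\ref{or2rem2}(ii), fold it into a single twisted elliptic operator $F_\bu^{\nabla_{\Ad(P)}}$ whose determinant line agrees with that of the complex. As in Example~\ref{or4ex1} and Remark~\ref{or4rem1}(iii), the orientation bundle of $\M_P^{G_2}$ (viewed as a derived manifold) is then the pullback of $\bar O_P^{F_\bu}\ra\ovB_P$. The key observation is that under the $G_2$-invariant splittings $\La^2T^*X=\La^2_7T^*X\op\La^2_{14}T^*X$ and $\La^3T^*X=\La^3_1T^*X\op\La^3_7T^*X\op\La^3_{27}T^*X$, the vector bundles $\La^0T^*X\op\La^2_7T^*X$ and $T^*X\op\La^7T^*X$ are each canonically isomorphic, as $G_2$-bundles, to the real spinor bundle $S\ra X$ -- both have rank $8$ and decompose under $G_2$ as $\boldsymbol 1\op\boldsymbol 7$, the unique $8$-dimensional real $G_2$-representation -- and under these identifications the principal symbol of $F_\bu$ agrees, up to continuous isotopy through elliptic symbols, with that of $D$ (both being given by Clifford multiplication).

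Third, I would invoke Remark~\ref{or2rem2}(i): since orientation bundles depend only on the homotopy class of the symbol, the identification above yields a canonical isomorphism $\bar O_P^{F_\bu}\cong\bar O_P^{E_\bu}$ of principal $\Z_2$-bundles on $\ovB_P$. Definition~\ref{or1def2} gives a natural correspondence between orientations on $\B_P$ and on $\ovB_P$, so the canonical orientation of $O_P^{E_\bu}$ from Theorem~\ref{or4thm10} descends to a canonical orientation of $\bar O_P^{E_\bu}\ra\ovB_P$; pulling back along $\M_P^{G_2}\hookra\ovB_P$ produces the desired canonical orientation on $\M_P^{G_2}$.

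The main obstacle, though essentially folklore in $G_2$-gauge theory, lies in step two: one must arrange the $G_2$-equivariant isomorphisms $\La^0T^*X\op\La^2_7T^*X\cong S\cong T^*X\op\La^7T^*X$ so that the principal symbol of $F_\bu$ and that of $D$ differ only by a continuous path of elliptic symbols on $X$, so that Remark~\ref{or2rem2}(i) applies. This reflects the fact that the $G_2$-instanton equation is a $G_2$-reduction of a $\Spin(7)$-Dirac-type first-order system, and once the symbol-level identification is set up carefully the transfer of canonical orientations from Theorem~\ref{or4thm10} is automatic, with no further choices.
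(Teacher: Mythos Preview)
Your proposal is correct and follows essentially the same route as the paper: the paper states just before the corollary that ``as in \S\ref{or41}, we may orient $\M_P^{G_2}$ by restricting orientations on $\bar O^{E_\bu}_P\ra\ovB_P$, for $E_\bu$ the Dirac operator of the spin structure on $X$ induced by $(\vp,g)$'', and then deduces the corollary immediately from Theorem~\ref{or4thm10}. You have supplied more detail than the paper on why the folded deformation complex \eq{or4eq3} has symbol isotopic to the Dirac symbol via the $G_2$-equivariant identifications $\La^0\op\La^2_7\cong S\cong\La^1\op\La^7$, which the paper treats as understood background; this is a welcome elaboration rather than a different argument.
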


This confirms a conjecture of the first author~\cite[Conj.~8.3]{Joyc2}.

Donaldson and Segal \cite{DoSe} propose defining enumerative invariants of $(X,\vp,g)$ by counting $\M_P^{G_2}$, {\it with signs}, and adding correction terms from associative 3-folds in $X$. To determine the signs we need an orientation on $\M_P^{G_2}$. Thus, Corollary \ref{or4cor1} contributes to the Donaldson--Segal programme.

\subsubsection{\texorpdfstring{$\Spin(7)$-instantons on $\Spin(7)$-manifolds}{Spin(7)-instantons on Spin(7)-manifolds}}
\label{or4210}

In the sequel \cite{CGJ}, using a variation on the method of \S\ref{or33} in which $P\ra X$ is the trivial bundle, Cao, Gross and Joyce prove:

\begin{thm}[Cao, Gross and Joyce {\cite[Th.~1.11]{CGJ}}] Let\/ $(X,g)$ be a compact, oriented, spin Riemannian $8$-manifold, and\/ $E_\bu$ be the positive Dirac operator $D_+:\Ga^\iy(S_+)\ra\Ga^\iy(S_-)$ on $X$. Suppose\/ $P\ra X$ is a principal\/ $G$-bundle for $G=\U(m)$ or $\SU(m)$. Then $\B_P$ is orientable. The mapping spaces $\cC,\cC_\al$ in {\rm\S\ref{or242}} are also orientable.
\label{or4thm11}
\end{thm}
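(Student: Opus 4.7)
The plan is to reduce to $G=\SU(m)$, establish the result on the model space $\cS^8$, and then transfer by excision using a variation of the method of \S\ref{or33} with $d=4$. Since Examples \ref{or2ex3} and \ref{or2ex5} together show that orientability on all $\U(m)$-bundles is equivalent to orientability on all $\SU(m)$-bundles, I may assume $G=\SU(m)$. Then $G$ is connected and simply-connected, so Proposition \ref{or3prop1} applies with $d=4$: any principal $\SU(m)$-bundle $P\ra X$ can be trivialized outside a $4$-skeleton~$Y\subset X$.

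Next I would establish the model case. By Bott periodicity $K^1(\cS^8)=0$, so Corollary \ref{or2cor1} gives orientability of $\B_{P'}$ for every principal $\U(m)$-bundle $P'\ra\cS^8$, and hence for every $\SU(m)$-bundle on $\cS^8$ by Example \ref{or2ex3}. Now given $P\ra X$ on a general compact oriented spin $8$-manifold $X$ and a connection $\nabla_P$, follow Step 3 of \S\ref{or33} verbatim with $n=8$ and $d=4$: apply Proposition \ref{or3prop1}(a) to obtain $Y,\Phi,$ a small open neighborhood $U$ of $Y$ retracting onto $Y,$ an open $V\subset X\sm Y$ with $U\cup V=X,$ and a connection $\hat\nabla_P$ on $P$ which is trivial on $V$ via $\Phi$. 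Since $\dim Y=4<8$, one can embed $\io:U\hookra\cS^8$ as an open submanifold; transport the spin structure and the Dirac operator under $\io$, and define $P'\ra\cS^8$ and $\hat\nabla_{P'}$ by gluing $\io_*(P|_U,\hat\nabla_P|_U)$ to the trivial bundle with trivial connection on $\cS^8\sm \io(Y)$ using the trivialization $\Phi$ on the overlap.

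Theorem \ref{or3thm1} then yields an isomorphism of $\Z_2$-torsors
\begin{equation*}
\Om^{X\cS^8}:\check O_P^{E_\bu}\big\vert_{[\hat\nabla_P]}\,\,\smash{\buildrel\cong\over\longra}\,\,\check O_{P'}^{E_\bu}\big\vert_{[\hat\nabla_{P'}]},
\end{equation*}
and by the model case the right-hand side is canonically part of a trivializable $\Z_2$-bundle on $\B_{P'}$. To upgrade this to trivializability of the full bundle $O_P^{E_\bu}\ra\B_P$ (not only at the single point $[\hat\nabla_P]$), execute Step 4 of \S\ref{or33}: given two sets of choices $(Y_i,\Phi_i,U_i,V_i,\hat\nabla_{P,i})$ for $i=0,1$ at two points of $\B_P$ joined by a path in $\A_P$, apply Proposition \ref{or3prop1}(b) to $X\t[0,1]$ to obtain a $5$-skeleton $Z\subset X\t[0,1]$ interpolating the $Y_i$ and a corresponding interpolation of the remaining data. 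By Theorem \ref{or3thm1}(i) the resulting family of excision isomorphisms depends continuously on $t\in[0,1]$, which shows that the monodromy of $O_P^{E_\bu}$ around the loop in $\B_P$ obtained from the path is trivial, giving orientability.

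Finally, orientability of the mapping spaces follows from orientability of $\B_P$: for $\al\in K^0(X)$ with $2\rank\al>8$ one uses the isomorphism \eq{or2eq22} and the fact that $(\Si_P^\cC)^*$ is an equivalence of categories of principal $\Z_2$-bundles (via \eq{or2eq20} for $k=0,1$), whereas for arbitrary $\al$ one invokes the definition \eq{or2eq28} applied to $\al+N\boo$ for $N$ sufficiently large. The main obstacle will be the explicit construction of $(P',\hat\nabla_{P'})$ on $\cS^8$ together with all data of Theorem \ref{or3thm1}(a)--(g), and, more delicately, the verification that in Step 4 the interpolation through $X\t[0,1]$ can be carried out compatibly with the embedding $\io$ and the gluing used to construct $P'$, so that strong functoriality genuinely yields a continuous family of excision isomorphisms along an arbitrary loop in~$\B_P$.
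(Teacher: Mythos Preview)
This theorem is not proved in the present paper; it is quoted from the sequel \cite{CGJ}. The only hint the paper gives is the sentence preceding the statement: the argument in \cite{CGJ} is ``a variation on the method of \S\ref{or33} in which $P\ra X$ is the \emph{trivial} bundle.'' Your proposal instead keeps a general $\SU(m)$-bundle and attempts to transplant it to $\cS^8$, which is a different route and runs into a genuine obstruction.

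The gap is the embedding step. You assert that because $\dim Y=4<8$ one can embed a neighbourhood $U$ of the $4$-skeleton $Y\subset X$ as an open submanifold of $\cS^8$ and ``transport the spin structure and the Dirac operator under $\io$'', but neither claim follows from a dimension count. Theorem~\ref{or3thm1}(e) requires $E_\bu\vert_U\cong\io^*(E_\bu'\vert_{\io(U)})$, i.e.\ the positive Dirac operators must match under $\io$; this forces $TX\vert_U$ with its metric and spin structure to agree with the pullback of $T\cS^8$. Since $\cS^8$ is stably parallelizable while $TX\vert_U$ need not be, such an embedding does not exist in general. The paper itself flags this in Step~3 of \S\ref{or33} with the caveat ``if this is possible'', and in every theorem it actually proves (Theorems~\ref{or4thm2}, \ref{or4thm4}, \ref{or4thm8}) it avoids the embedding altogether, instead producing a complex structure on $E_\bu\vert_U$ and applying Theorem~\ref{or3thm2}. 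For $n=8$ no such complex structure on the Dirac operator is available (cf.\ Example~\ref{or2ex2}(b)), so a new idea is needed. The hint about the trivial bundle points to the reduction you are missing: by Proposition~\ref{or2prop3}(b) it suffices to show $\cC_0$ is orientable, so one may take $P=X\t\U(m)$ from the outset and analyse the monodromy morphism $K^1(X)\ra\{\pm 1\}$ directly; with $P$ globally trivial the excision data of Theorem~\ref{or3thm1} become far more flexible. Your own final sentence correctly identifies the interpolation of $\io$ as ``the main obstacle''---but it is one your outline does not overcome.
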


This extends results of Cao and Leung \cite[Th.~1.2]{CaLe2}, who proved Theorem \ref{or4thm11} if $G=\U(m)$ and $H_{\rm odd}(X,\Z)=0$, and Mu\~noz and Shahbazi \cite{MuSh}, who proved Theorem \ref{or4thm11} if $G=\SU(m)$ and $\Hom(H^3(X,\Z),\Z)=0$. As for Theorem \ref{or4thm10}, the analogue of Theorem \ref{or4thm11} for Lie groups $G$ other than $\U(m)$ or $\SU(m)$ may be false. In \cite{CGJ} the authors give an example of a compact, oriented, spin 8-manifold $X$ such that $O_P^{E_\bu}\ra\B_P$ is not orientable when $P=X\t\Sp(m)\ra X$ is the trivial $\Sp(m)$-bundle for any~$m\ge 2$.

Again, Theorem \ref{or4thm10} is related to an 8-dimensional gauge theory discussed by Donaldson and Thomas \cite{DoTh}. Let $(X,\Om,g)$ be a compact $\Spin(7)$-manifold. Then there is a natural splitting $\La^2T^*X=\La^2_7T^*X\op\La^2_{21}T^*X$ into vector subbundles of ranks 7 and 21. Suppose $G$ is a Lie group and $P\ra X$ a principal $G$-bundle. A $\Spin(7)$-{\it instanton\/} on $P$ is a connection $\nabla_P$ on $P$ with $\pi^2_7(F^{\nabla_P})=0$ in $\Ga^\iy(\Ad(P)\ot\La^2_7T^*X)$. Write $\M_P^{\Spin(7)}$ for the moduli space of irreducible $\Spin(7)$-instantons on $P$. Then $\M_P^{\Spin(7)}$ is a derived manifold. Examples of $\Spin(7)$-instantons were given by Lewis \cite{Lewi}, Tanaka \cite{Tana2}, and Walpuski~\cite{Walp5}. 

As in \S\ref{or41}, we may orient $\M_P^{\Spin(7)}$ by restricting orientations on $\bar O^{E_\bu}_P\ra\ovB_P$, for $E_\bu$ the positive Dirac operator of the spin structure on $X$ induced by $(\Om,g)$. Thus Theorem \ref{or4thm11} implies:

\begin{cor} Let\/ $(X,\Om,g)$ be a compact\/ $\Spin(7)$-manifold. Then $\M_P^{\Spin(7)}$ is orientable for any principal\/ $G$-bundle\/ $P\ra X$ with\/ $G=\U(m)$ or\/~$\SU(m)$.
\label{or4cor2}
\end{cor}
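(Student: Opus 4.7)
The plan is to deduce this directly from Theorem \ref{or4thm11} together with the general gauge-theoretic framework laid out in \S\ref{or41}, specifically the principle that orientations on the ambient moduli stack $\ovB_P$ restrict to orientations on the cut-out derived manifold $\M_P^{\Spin(7)}$. The key observation is that the linearisation of the $\Spin(7)$-instanton equation $\pi^2_7(F^{\nabla_P})=0$, coupled with the gauge-fixing condition, yields a four-term elliptic complex analogous to \eq{or4eq1} or \eq{or4eq3}, whose determinant line agrees with $\det T^*\M_P^{\Spin(7)}$ at smooth points and defines the orientation bundle on $\M_P^{\Spin(7)}$ as a derived manifold in the sense of Remark \ref{or4rem1}(iii).

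First, I would identify the elliptic operator $E_\bu$ governing orientations of $\M_P^{\Spin(7)}$. Using the standard $\Spin(7)$-decomposition of forms and the symmetry $\La^2_7T^*X \cong S_-$ induced by the $\Spin(7)$-structure, one can check (as is done in Donaldson--Thomas \cite{DoTh} and Cao--Leung \cite{CaLe2}) that the rolled-up elliptic operator built as in Remark \ref{or2rem2}(ii) from the deformation complex of $\Spin(7)$-instantons has the same symbol, up to continuous isotopy, as the positive Dirac operator $D_+:\Ga^\iy(S_+)\ra\Ga^\iy(S_-)$ of the spin structure on $X$ induced by $(\Om,g)$. Since by Remark \ref{or2rem2}(i) the orientation bundle depends only on the symbol up to continuous isotopy, we obtain a canonical isomorphism between the orientation bundle of $\M_P^{\Spin(7)}$ (as a derived manifold) and the pullback $\io^*(\bar O^{E_\bu}_P)$ under the inclusion $\io:\M_P^{\Spin(7)}\hookra\ovB_P$, for $E_\bu=D_+$.

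Secondly, I would invoke Theorem \ref{or4thm11} to conclude that $\B_P$ is orientable for $G=\U(m)$ or $\SU(m)$. As noted in Definition \ref{or1def2}, since $\Pi_P:\B_P\ra\ovB_P$ is a fibration with connected, simply-connected fibre $[*/Z(G)]$, orientability of $\B_P$ is equivalent to orientability of $\ovB_P$, and orientations correspond bijectively. Hence $\bar O^{E_\bu}_P\ra\ovB_P$ is trivialisable, and choosing any trivialisation yields a trivialisation of $\io^*(\bar O^{E_\bu}_P)$, which by the first step is the orientation bundle of $\M_P^{\Spin(7)}$. This gives the desired orientation on the derived manifold $\M_P^{\Spin(7)}$.

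There is no serious obstacle here beyond the symbol identification in the first step, since the hard analytic and topological work has already been done in Theorem \ref{or4thm11}. The only subtlety worth flagging is the need to handle $\M_P^{\Spin(7)}$ as a \emph{derived} manifold rather than a smooth manifold, because without generic perturbations of $(\Om,g)$ the obstruction space need not vanish; this is handled by appealing to the framework of Joyce \cite{Joyc1,Joyc3,Joyc4,Joyc5} referenced in Remark \ref{or4rem1}(iii), where orientations for derived manifolds are defined precisely so that pullback of orientations on $\ovB_P$ under a quasi-smooth inclusion makes sense and behaves as expected.
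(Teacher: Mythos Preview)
Your proposal is correct and follows essentially the same approach as the paper: the paper states just before the corollary that ``As in \S\ref{or41}, we may orient $\M_P^{\Spin(7)}$ by restricting orientations on $\bar O^{E_\bu}_P\ra\ovB_P$, for $E_\bu$ the positive Dirac operator of the spin structure on $X$ induced by $(\Om,g)$,'' and then deduces the corollary immediately from Theorem~\ref{or4thm11}. Your argument spells out the same two steps (symbol identification with $D_+$, then orientability of $\B_P$ from Theorem~\ref{or4thm11} transferred to $\ovB_P$ and pulled back), with some additional care about the derived-manifold framework that the paper leaves implicit.
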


Borisov and Joyce \cite{BoJo} and Cao and Leung \cite{CaLe1} set out a programme to define Donaldson--Thomas type invariants `counting' moduli spaces of  (semi)stable coherent sheaves $\M_\al^{\coh}$ on a Calabi--Yau 4-fold $X$. To do this requires an `orientation' on $\M_\al^{\coh}$ in the sense of \cite[\S 2.4]{BoJo}. Cao, Gross and Joyce \cite[Cor.~1.17]{CGJ} use Theorem \ref{or4thm11} to prove that all such moduli spaces $\M_\al^{\coh}$ are orientable.

\medskip

\noindent{\small\sc The Mathematical Institute, Radcliffe
Observatory Quarter, Woodstock Road, Oxford, OX2 6GG, U.K.

\noindent E-mails: {\tt joyce@maths.ox.ac.uk, tanaka@maths.ox.ac.uk, \\
upmeier@maths.ox.ac.uk.}}

\end{document}